\documentclass[12pt]{article}

\usepackage{amsmath,amssymb,latexsym}
\usepackage{bbm} 
\usepackage{epsfig}
\usepackage{lscape}
\usepackage{array}

\def\bI{\boldsymbol{I}}

\def\bO{\boldsymbol{O}}

\def\binfim{\boldsymbol{-}\boldsymbol{\infty}}
\def\binfip{\boldsymbol{+}\boldsymbol{\infty}}

\def\real{\mathbb{R}}

\def\relative{\mathbb{Z}}

\def\x{\mathbbm{x}}
\def\y{\mathbbm{y}}

\def\I{\mathbb{I}}

\def\T{\mathbb{T}}

\def\B{\mathbb{B}}
\def\U{\mathbb{U}}
\def\mat1{\mathbb{I}}

\def\dzero{\mbox{{\scriptsize $\mathbb{O}$}}}  
\def\dun{\mathbbm 1}

\def\limk{\lim_{k \rightarrow +\infty}}

\newcommand { \Iletter}[1] {I\kern-0.10em #1 }

\def\bit{\begin{itemize}}
\def\eit{\end{itemize}}
\def\ben{\begin{enumerate}}
\def\een{\end{enumerate}}
\def\bde{\begin{description}}
\def\ede{\end{description}}

\def\bar{\begin{array}}
\def\ear{\end{array}}
\def\beq{\begin{equation}}
\def\eeq{\end{equation}}
\def\bfi{\begin{figure}[hbt] \begin{center}}
\def\efi{\end{center} \end{figure}}
\def\noi{\noindent}
\def\bce{\begin{center}}
\def\ece{\end{center}}
\newcommand{\proof}{{\bf Proof. }}
\newcommand{\cqfd}{\hfill $\Box$}

\newtheorem {theo} {Theorem}[section]

\newtheorem {lemm} {Lemma}[section]

\newtheorem {propo} {Proposition}[section]

\newtheorem {defi} {Definition}[section]

\newtheorem {rem} {Remark} [section]

\newtheorem {resu} {Result} [section]

\newtheorem {Assum} {Assumption} [section]

\newtheorem {numerotation} {Numerotation convention} [section]

\begin{document}        

     

\title {Substitution for minimizing/maximizing a tropical linear (fractional) programming}

\author{L. Truffet \\
  IMT Atlantique \\
  Department Automation, Production and Computer Sciences \\
  La Chantrerie, 4 rue A. Kastler
  44300 Nantes, France  \\
  mail: laurent.truffet@imt-atlantique.fr}

\maketitle 

\begin{abstract}
  Tropical polyhedra seem to play a central role in static analysis of softwares. These
  tropical geometrical objects play also a central role in parity games especially mean payoff
  games and energy games. And determining if an initial state of such game leads to win the game is known
  to be equivalent to solve a tropical linear optimization problem.
This paper mainly focus on the tropical linear minimization problem using
  a special substitution method on the tropical cone obtained by homogenization of
  the initial tropical polyhedron. But due to a particular case which can occur
  in the minimization process based on substitution we have to switch on a maximization
  problem. Nevertheless, forward-backward substitution is known to be
  strongly polynomial. The special substitution developed in this paper inherits
  the strong polynomiality of the classical substitution for linear systems. This
  special substitution must not be confused
  with the exponential execution time of the tropical Fourier-Motzkin elimination. Tropical
  fractional minimization problem with linear objective functions is also solved by tropicalizing
  the Charnes-Cooper's transformation
  of a fractional linear program into a linear program developed in the usual linear
  algebra. Let us also remark that no particular assumption is made on the polyhedron of interest.

  Finally, the substitution method is illustrated on some examples borrowed from the litterature.
  \end{abstract}

\noi
    {\bf Keywords}: $(\max,+)$-algebra, tropical linear programming, fractional programming, algorithmic complexity. \\
    \noi
    MSC: 15A80, 16Y60, 90C32, 03D15. \\

    \noi
    NOTA:

    This new version correct a bad logical mistake which have been done in the previous versions of
    that preprint for the minimization problem of a tropical linear program. Indeed, the minimization
    problem deals with inequalities describing tropical linear half-spaces involved in the problem which are
    of the form $\max(a,b) \geq c$. Such inequalities are logically equivalent to the
    disjunction: $(a \geq c) \mbox{ or } (b \geq c)$. Unfortunately, this disjunction has been
    treated as if it was the conjunction $(a \geq c) \mbox{ and } (b \geq c)$. Thus,
    from the logical point of view the minimization problem
    deals with conjunction of a set of disjunctions (ie. intersection of half-spaces). However, let us
    stress that this logical error
    does not question the general principle of the substitution method.

    Note also that the maximization problem
    deals with inequalities describing the same tropical linear half-spaces involved in the problem which
    are of the form $\max(a',b') \leq c'$. Theses inequalities are logically equivalent to the
    conjunction: $(a' \leq c') \mbox{ and } (b' \leq c')$. Thus, the maximization problem only deals with
    conjunction of a set of conjunctions (intersection of half-spaces) which has already been
    treated in the Appendix A in the previous versions of that preprint.





\section{Main notations and definitions}
\label{secNotations}

For every $k \leq k'$, $k,k' \in \relative$ we define the 
discrete interval $[| k,k' |]:=\{k, k+1, \ldots, k'-1,k'\}$. 
The inclusion of sets is denoted $\subseteq$ and the strict inclusion 
of set is denoted $\subset$. \\

Let us introduce the following
notations and definitions for the $(\max,+)$-algebra. The reader is invinted to read eg.
\cite{kn:Bac-cooq} to find more details on $(\max,+)$-algebra and idempotent semi-rings.

The $(\max,+)$-algebra denotes the set $\real_{\dzero}:= \real \cup \{-\infty \}$
equipped the ``addition'': $(a,b) \mapsto \max(a,b)$ and the ``multiplication''
$(a,b) \mapsto a + b$.

\noi
In the sequel we will use the following notations: $a \oplus b := \max(a,b)$,
$a \otimes b   := a+ b$ (sometimes $a \otimes b$ could be denoted  $ab$). 
  $\dun :=0$ is the neutral element
  for $\otimes$, $\dzero :=-\infty$ is the neutral element for $\oplus$, $\dzero$ is also the absorbing element
  of $\otimes$.
\noi
  We will use the following power notation:

  \[
  a^{\otimes (b)}:= a \times b \mbox{ ($\times$: usual multiplication)}.
  \]
  Note that in particular we have: $a^{\otimes (0)}=\dun$.

  The product $a \otimes b^{\otimes (-1)}$ will also be
  denoted: $a / b$. Because division is required in this paper we have: $\dzero^{\otimes (-1)}=+\infty$.
  Thus, we work in the complete dioid by adding the top element $+\infty$:
\begin{subequations}
  \begin{equation}
(\overline{\real}_{\dzero}, \oplus, \otimes)
    \end{equation}
with:
  \begin{equation}
\overline{\real}_{\dzero}:= \real \cup \{-\infty, +\infty\},
  \end{equation}
  and the operations $\oplus$ and $\otimes$ are extended as follows:
  \begin{equation}
\forall a \in \overline{\real}_{\dzero},  \forall b \neq \dzero, \; a \oplus +\infty = +\infty, \; b \otimes +\infty =+\infty, \; \dzero \otimes +\infty=\dzero.
    \end{equation}
    \end{subequations}
 \noi
   $\overline{\real}_{\dzero}$ is equipped with the natural order $\leq$ defined by:
  \[
a \leq b \Leftrightarrow a \oplus b = b. 
  \]
\noi

  \noi
  For any matrix $A$, $a_{i,.}$ denotes the $i$th row of $A$, $a_{.,j}$ denotes the
  $j$th column of $A$ and $a_{i,j}$ denotes the entry $(i,j)$ of $A$. \\

  The scalar operations are generalized to matrices as follows:

  \bit
\item addition of two matrices: $(a_{i,j}) \oplus (b_{i,j}) := (a_{i,j} \oplus b_{i,j})$ 
\item product of two matrices: the entry $(i,j)$ of the matrix $C=A \otimes B$ is
  defined by: \\
  $c_{i,j}:= \oplus_{k} a_{i,k} \otimes b_{k,j}$ (=$\max_{k}(a_{i,k}+b_{k,j})$).

\item comparaison of two matrices: $A \leq B$ means $\forall i,j: a_{i,j} \leq b_{i,j}$.

  \item all vectors are column vectors. And the transpose operator is denoted $(\cdot)^{\intercal}$.
  \item for every $m \times n$-matrix $A$ we define the submatrix of $A$ denoted $A_{IJ}$ by:
    \begin{equation}
      \label{eqSubMatNotation}
A_{IJ}:= (a_{i,j})_{i \in I, j \in J}.
    \end{equation}
    Where $I \subseteq [|1,m|]$ and $J \subseteq [|1,n|]$.
  \eit

  We will use the bold symbols for the following particular vectors and matrices.
  
  \bit
\item The boldsymbol $\binfim$ (resp. $\binfip$) denotes the
  column vector which all components are $-\infty$ (resp. $+\infty$). The number of components
  of the vector is determined by the context.
\item The matrix $\bI_{n}$ denotes the $n \times n$-identity matrix. Its diagonal
  entries are all $\dun$ and its off-diagonal entries are all $\dzero$.
\item $\bO_{m,n}$ will denote the $m \times n$-null matrix (ie, all entries are $\dzero$). 
  When $m=n$ the the square null matrix $\bO_{n,n}$ is denoted $\bO_{n}$.
  
  \eit
    
To any $n \times n$-matrix $A$ we can associate a valued graph $G(A)$ with valuation $v$ such that
 the set of the vertices of $G(A)$ is $\{1, \ldots, n\}$. The valuation is defined by $v_{i,j}=a_{j,i}$ for all
 vertices $i,j$. We say that an arc $i \rightarrow j :=(i,j)$ exists if $v_{i,j} \neq \dzero$. A path $p$ of lenght
 $k \geq 1$ is
 a series of $k$ consecutive arcs which has the form: $p=i_{0} \rightarrow i_{1} \rightarrow \cdots i_{k-1} \rightarrow
 i_{k}$. And the weight of $p$ is defined by: $w(p):= v_{i_{0}, i_{1}} \otimes \cdots \otimes v_{i_{k-1}, i_{k}}=
   a_{i_{1}, i_{0}} \otimes \cdots \otimes a_{i_{k}, i_{k-1}}$. An elementary circuit of lenght $k \geq 1$ is a
   special path such that $i_{k}=i_{0}$ and $\forall l=1, \ldots, k-1$: $i_{l}$ appears only once.

Let $q, s$ be two $n$-dimensional vectors and $x=(x_{j})_{j=1}^{n}$. Then, to any inequality of the form:

\begin{subequations}
  \begin{equation}
    \label{a+a-ineq}
q^{\intercal} \otimes x \geq s^{\intercal} \otimes x 
  \end{equation}
  Noticing that (\ref{a+a-ineq}) is expressed in usual notations as 
  $(\max(q_{1} + x_{1}, \ldots, q_{n} + x_{n}) \geq (\max(s_{1} + x_{1}, \ldots, s_{n} + x_{n})$
  one can associate the following equivalent disjunction recalling that
  $\max(a,b) \geq c \Leftrightarrow (a \geq c) \mbox{ or } (b \geq c)$:

  \begin{equation}
    \label{ORa+a-ineq}
\textsf{OR}_{j=1}^{n} \{q_{j} \otimes x_{j} \geq s^{\intercal} \otimes x\}.
    \end{equation}
  And one can associate the following equivalent conjonction recalling that
  $\max(a,b) \leq c \Leftrightarrow (a \leq c) \mbox{ and } (b \leq c)$:

  \begin{equation}
    \label{ANDa+a-ineq}
\textsf{AND}_{j=1}^{n} \{s_{j} \otimes x_{j} \leq q^{\intercal} \otimes x \}.
    \end{equation}

\end{subequations}

\section{Introduction} 
\label{secIntro}

\subsection{Problem description}

Let $A$ and $C$ be two $m \times n$-matrices and $b$ and $d$ be two $m$-dimensional column vectors.
In this paper we aim to solve by a special
substitution method the following max-plus linear programming problem called {\em
  minimum problem research} (\textsf{mpr}): 

\begin{subequations}
\begin{equation}
\textsf{min}\{z=c^{\intercal} \otimes x, x \in \mathcal{P}(A,b,C,d) \cup \{\boldsymbol{ \pm\infty}\}\},
  \end{equation}
where $\mathcal{P}(A,b,C,d)$ denotes the max-plus polyhedron defined by:
\begin{equation}
  \label{eqdefPAbCd}
\mathcal{P}(A,b,C,d):=\{x \in \overline{\real}_{\dzero}^{n}: A \otimes x \oplus b \geq C \otimes x \oplus d\},
\end{equation}

\end{subequations}

But instead of considering the polyhedron we will develop substitution
method on the max-plus cone $\mathcal{C}(A,b,C,d)$ obtained by introducing
$h$ as the homogenization variable, ie.:

\begin{subequations}
\begin{equation}
  \mathcal{C}(A,b,C,d):=\{(x,h) \in \overline{\real}_{\dzero}^{n+1}: A \otimes x \oplus b \otimes h \geq C \otimes x \oplus d \otimes h\}\cup \{\boldsymbol{\pm \infty}\}.
\end{equation}

The linear system of inequalities $A \otimes x \oplus b \otimes h \geq C \otimes x \oplus d \otimes h$ is supposed
to verify the following condition:

\begin{equation}
  \label{maximalitycondition}
  \bar{l}
  \forall i \in [|1,m|] \\
  \{i': i' \neq i \mbox{ and } \exists \alpha_{i'} \in  \real \mbox{ s.t. } a_{i,.}
  \otimes x  \oplus b_{i} \otimes h \geq 
  \alpha_{i'} \otimes (c_{i',.} \otimes x \oplus  d_{i'} \otimes h) \} = \emptyset.
  \ear
  \end{equation}

\end{subequations}

\begin{rem}
  The condition (\ref{maximalitycondition}) can be assumed to be always true because
  $f \geq g_{1}$ and $f \geq g_{2}$ is equivalent to $f \geq g_{1} \oplus g_{2}$. 
  \end{rem}

To formulate the minimization problem $\textsf{mpr}$ first we borrow the Fourier's trick
(see eg. \cite{kn:Juhel2025}, \cite{kn:Williams86}) used
in the usual linear algebra and applying it to the $(\max,+)$-algebra. It comes
that $\min(z=c^{\intercal} \otimes x \oplus c_{h} \otimes h)$ can be replaced by:
$\min(z) \mbox{ and } z \geq c^{\intercal} \otimes x \oplus c_{h} \otimes h$. And then, we will solve the following
minimization problem $\textsf{mpr}$ defined as:

\begin{subequations}
  \begin{equation}
    \label{minz}
\bar{l}
\min(z)
\ear
    \end{equation}
 such that: 
 \begin{equation}
   \label{domValxh}
(x,h) \in \overline{\real}_{\dzero}^{n+1},
   \end{equation}
and
 \begin{equation}
   \label{mprContraintes}
   \bar{l}
    z \geq c^{\intercal} \otimes x \oplus c_{h} \otimes h \\
    A \otimes x \oplus b \otimes h \geq C \otimes x \oplus d \otimes h.
  \ear
\end{equation}
 Where the homogenization variable $h$ satisfies the following condition:
 \begin{equation}
   \label{cond-h-different-infini}
h > \dzero.
 \end{equation}
And
\begin{equation}
  \label{zhnonsubsti}
\mbox{$z$ and $h$ are not substituable}.
\end{equation}
Finally, among the variables of the problem $z, x_{1}, \ldots, x_{n}$ we have:
\begin{equation}
  \label{zprio}
  \mbox{$z$ has the highest priority because bounded by the cost function}.
  \end{equation}
\end{subequations}

Sometimes it will be useful to denote by $\textsf{mpr}(A,b,C,d,c,c_{h},z,x,h,(m,n))$
the \textsf{mpr} defined (\ref{minz})-(\ref{domValxh})-(\ref{mprContraintes})-(\ref{cond-h-different-infini})-(\ref{zhnonsubsti})-(\ref{zprio}).

\subsection{Motivations}

Motivations for studying tropical linear programms have been widely explained in several previous works by several
authors. In \cite{kn:BA08} author motivate tropical linear optimization by aiming to solve
the problem of the synchronization of multiprocessors interactive systems. In \cite{kn:GKS012} and \cite{kn:GMH014}
authors indicate further motivations to solve not only tropical linear programms but also
fractional tropical linear programms (see \S~\ref{seclinfraction} for the description of such problem). One can
mention eg.: static analysis of softwares, finding winning strategy in Mean Payoff Game.

\subsection{Organization of the paper}
Section~\ref{secPrel} deals with the necessary materials to solve the minimization problem \textsf{mpr}.
The substitution method is described in Section~\ref{secmin}. \\

The reader must be aware that the
substitution must not be confused with the tropical Fourier-Motzkin elimination procedure
which has an exponential execution time \cite{allamigeon:hal-01087367}. \\

The substitution is based on
the prametrized research domain (\ref{eqRxh})-(\ref{eqmudominelambda}) for the $x_{j}$'s and the
homogenization variable $h$. Dominating variable is defined in Definition~\ref{defHierarchyxj}. And the
hierarchy $\preceq_{var}$ between variables $x_{j}$ (\ref{eqOrdreVar}) characterizes the fact
that the value domain of the dominating variables has non trivial upper and lower bounds. The partition
of the $(x,h)$-linear functions into $h$-bounded and $h$-unbounded functions is defined in Definition~\ref{defhBhU}. 
The hierarchy $\preceq_{fct}$ between $(x,h)$-linear functions (\ref{eqOrdrefct}) is based on the
obvious property that value domain of a $h$-bounded $(x,h)$-linear function
$\alpha^{\intercal} \otimes x \oplus \beta \otimes h$ only depends on $\beta$ and
is strictly included in the value domain of the
function $\alpha^{\intercal} \otimes x$ (see (\ref{eqfRdomaine})-(\ref{eqDomainhB})-(\ref{eqDominclus})).
The Theorem~\ref{theoNTZmin} of \S~\ref{subsubChoixvar} characterizes the relevant variable
$x_{j^{*}}$ which can be substituted. And the Theorem~\ref{thmchattaignable} of \S~\ref{subsubChoixvar}
ensures the optimality of the result. The update of the \textsf{mpr} is described in
\S~\ref{subsecNewcost+Newconepmrp}. Section~\ref{seclinfraction} deals with the fractional
linear programming. The method is illustrated on numerical examples in Section~\ref{secExNum}.
Finally, we conclude in Section~\ref{secConcl} where the complexity of our substitution method
(about $\mathcal{O}(mn^{3})$) is more discussed and detailed. We add some remarks on this work vs other works. 
Finally, the maximization problem is presented in appendix~\ref{secMPR}.

\subsection{The switch to a maximizing problem can occur}
\label{subswitch}
The appendix~\ref{secMPR} dealing with maximization problem is needed because the
following situation can occur in the substitution process for \textsf{mpr}. The reader has noticed that
Fourier's trick induces that the cost function is a lower bound of the variable $z$. Thus, because all linear
functions are non-decreasing the research of lower bounds on the $x_{j}$'s is the priority. But in the substitution process
we can have the following situation:
\bit
\item the cost function only depends on the homogenization variable $h$, ie. $z \geq c_{h}\otimes h$,

\item not all the variables have been substituted and they have no lower bounds but upper bounds
  exist at least for one variable,

\item the optimality is not ensured, ie. Theorem~\ref{thmchattaignable} of \S~\ref{subsubChoixvar}
  does not apply.

\eit

Noticing that minimazing $c_{h}\otimes h$ or maximazing $c_{h}\otimes h$ over the
set of the remaining variables does not make differences
one can switch to the maximization problem with $z \leq c_{h}\otimes h$ (Fourier's trick for
$\max$) and compute upper bounds on the remaining $x_{j}$'s of the problem.

Dually, the switch from maximization problem to minimization problem can occur.

\section{Preliminary results}
\label{secPrel}
Let us consider two $m \times n$-matrices $L$ and $W$. And let us consider the
cone $\mathcal{C}(L,W):=\{x: L \otimes x \geq W \otimes x\}$. To this
cone we associate the function $\textsf{setrowtozero}(L,W)$ defined by:

  \begin{equation}
    \label{ligneazero}
    \bar{lll}
    \textsf{setrowtozero}(L,W) & := & \mbox{ For $i=1$ to $m$ do} \\
    \mbox{ } & \mbox{} & \mbox{ if $l_{i,.} \geq w_{i,.}$ then $l_{i,.}:=\dzero,  w_{i,.}:=\dzero$}
    \ear
  \end{equation}

\subsection{Elementary results from interval arithmetic in dioids}
\label{subintervalCalc}
In this subsection we present elementary materials dealing with
interval arithmetic in the $(\max,+)$-algebra. The interval arithmetic on dioids
has been already used in the context of automatic-control problem (see
eg. \cite{kn:LitSob01}, \cite{LHOMMEAU20041923}). An interval
is a set denoted $I:=[u,v]$ where $u,v \in \overline{\real}_{\dzero}$ and
$u \leq v$. We define hereafter the elementary needed in this paper:

\bit
\item Addition of two intervals. Let $I_{i}=[u_{i},v_{i}]$, $i=1,2$ be two intervals, then the
  addition of $I_{1}$ and $I_{2}$ is denoted $I_{1} \oplus I_{2}$ and defined by:

  \begin{equation}
[u_{1},v_{1}] \oplus [u_{2},v_{2}]:=[u_{1} \oplus u_{2}, v_{1} \oplus v_{2}].
    \end{equation}

\item Scalar multiplication. Let $I=[u,v]$ be an interval and let $\alpha \in \overline{\real}_{\dzero}$. Then,
  the scalar multiplication of $I$ by the scalar $\alpha$ is denoted $\alpha \otimes I$ and defined
  by:

  \begin{equation}
\alpha \otimes [u,v]:= [\alpha \otimes u, \alpha \otimes v].
    \end{equation}

\item And the linear combination of $k$ intervals is the following interval defined as:

  \begin{equation}
    \label{eqcomblininterval}
\oplus_{i=1}^{k} \alpha_{i} \otimes [u_{i}, v_{i}]:= [\oplus_{i=1}^{k} \alpha_{i} \otimes u_{i}, \oplus_{i=1}^{k} \alpha_{i} \otimes v_{i}].
  \end{equation}
  Where $\alpha_{i} \in \overline{\real}_{\dzero}$, $i=1, \ldots,k$.
  
\eit

We will have to use results dealing with particular intervals. These particular intervals are defined
as follows:

\begin{equation}
  \label{defIntervalu+infini}
\mathcal{I}:=\{[u, +\infty], u \in \overline{\real}_{\dzero}\}.
  \end{equation}

Let us point out that the familly of intervals $\mathcal{I}$ is obviously stable by
$(\max,+)$-linear combinations.

\begin{propo}
  \label{propMinInterval}
  Let $[u_{l}, +\infty]$, $l=1, \ldots,k$, $k \geq 2$ be $k$ intervals of $\mathcal{I}$ with
  $u_{1} \leq \ldots \leq u_{k}$. Then, we have the following series of intervals inclusion:
  \begin{equation}
[u_{k}, +\infty] \subseteq [u_{k-1}, +\infty] \subseteq \cdots \subseteq [u_{1},+\infty].
    \end{equation}
  \end{propo}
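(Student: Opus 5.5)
The statement is a direct consequence of the definition of an interval as a set together with the transitivity of the natural order $\leq$ on $\overline{\real}_{\dzero}$. I would prove the elementary fact that for any $u \leq u'$ in $\overline{\real}_{\dzero}$ one has $[u',+\infty] \subseteq [u,+\infty]$, and then apply it to consecutive pairs $u_{l} \leq u_{l+1}$, $l=1,\ldots,k-1$, to obtain the whole chain.

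For the elementary fact, I would take an arbitrary $t \in [u',+\infty]$, which means $u' \leq t \leq +\infty$. Since $u \leq u'$, transitivity of $\leq$ gives $u \leq t$, so $t \in [u,+\infty]$. If one prefers to stay within the dioid formalism, transitivity follows from the definition $a \leq b \Leftrightarrow a \oplus b = b$ and associativity of $\oplus$: $u \oplus t = u \oplus (u' \oplus t) = (u \oplus u') \oplus t = u' \oplus t = t$. I would note explicitly that $\leq$ on $\overline{\real}_{\dzero}$ is the usual order of the extended reals, with $-\infty$ minimal and $+\infty$ maximal. Hence every $[u,+\infty]$ with $u \in \overline{\real}_{\dzero}$ is a well-defined interval in the sense of \S\ref{subintervalCalc}, since $u \leq +\infty$ always holds.

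Applying the fact with $(u,u')=(u_{l},u_{l+1})$ for each $l$ gives $[u_{l+1},+\infty] \subseteq [u_{l},+\infty]$. Chaining these inclusions, using transitivity of $\subseteq$ or a trivial induction on $k$, yields the displayed series. There is no real obstacle. The only points needing a moment's care are the degenerate cases $u_{l}=-\infty$ or $u_{l}=+\infty$, and both are covered by the same argument because the order is total on $\overline{\real}_{\dzero}$.
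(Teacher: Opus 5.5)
Your proof is correct. The paper states this proposition without any proof, treating it as immediate, and your argument is exactly the routine verification it leaves implicit: transitivity of $\leq$ on $\overline{\real}_{\dzero}$ gives $[u_{l+1},+\infty] \subseteq [u_{l},+\infty]$ for each consecutive pair, and chaining these gives the displayed series.
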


If $\textsf{Min}$ (resp. $\textsf{Max}$) denotes the minimum (resp. maximum)
in the sense of interval inclusion then, from Proposition~\ref{propMinInterval}
we have the following noticeable interval equalities:

\begin{equation}
  \label{eq-intersection=Min-interval}
\cap_{l=1}^{k} [u_{l}, +\infty]=[u_{k},+\infty] = \textsf{Min}\{[u_{l}, +\infty], l=1, \ldots,k\}, 
  \end{equation}
and
\begin{equation}
  \label{eq-union=Max-interval}
\cup_{l=1}^{k} [u_{l}, +\infty]=[u_{1},+\infty] = \textsf{Max}\{[u_{l}, +\infty], l=1, \ldots,k\}, 
  \end{equation}

   Let $A$ be a $n \times n$-matrix, $b$ a $n$-dimensional vector and $x=(x_{i})_{i=1}^{n}$. We have the
   following well-known result. 
   
   \begin{resu}[Sub and super fix point]
     \label{lemSAT}
     Let us consider the following sets defined by:
     \begin{subequations}
  \begin{equation}
    \label{masterineq}
\mathcal{S}^{\leq}(A,b):=\{x: x \leq A \otimes x \oplus b \}. 
  \end{equation}
  and
\begin{equation}
    \label{masterineq2}
\mathcal{S}^{\geq}(A,b):=\{x: x \geq A \otimes x \oplus b \}. 
  \end{equation}  

\end{subequations}
  
  If the following condition holds:
  
    \begin{equation}
      \label{condAinfini=O}
       \limk A^{\otimes (k)} = \bO_{n} \mbox{ ($\bO_{n}$: $n \times n$-null matrix)}
\end{equation}
    then the greatest element of the set $\mathcal{S}^{\leq}(A,b)$
    (resp. the smallest element of the set $\mathcal{S}^{\geq}(A,b)$) is:

    \[
x= A^{*} \otimes b,
\]
where $A^{*}:= \bI_{n} \oplus A \oplus A^{\otimes (2)} \oplus  A^{\otimes (3)} \oplus  \cdots $ is the
infinite ``sum'' of the powers of the matrix
$A$ known as the Kleene star of the matrix $A$. And
under condition (\ref{condAinfini=O}) we have: $A^{*}= \bI_{n} \oplus A \oplus A^{\otimes (2)} \oplus \cdots \oplus A^{\otimes (n-1)}$.

This greatest element is the solution of the $(\max,+)$-linear equation: $x= A \otimes x \oplus b$
which is obtained by the saturation of the inequality: $x \leq  A \otimes x \oplus b$.
     \end{resu}

   There exist different conditions such that (\ref{condAinfini=O}) is true. Notably the one
   which states that all elementary circuits of the valued graph $G(A)$ associated with the matrix $A$
 have weight $ < \dun$.
  
 We specify the previous Result~\ref{lemSAT} in the following case.

 \begin{propo}[Valid inequality]
   \label{propineqValid}
   Let $a$ be a scalar, $a \in \real_{\dzero}$. Let $v$ be a $n$-dimensional vector. And let $y=(y_{i})_{i=1}^{n}$ be a
   $n$-dimensional column vector of variables in $\real_{\dzero}$. Then, an inequality of the form:
   \begin{equation}
     \label{eqvalid}
a \otimes y_{j} \lesseqgtr v^{\intercal} \otimes y, 
     \end{equation}
   is said to be {\em valid} iff the following condition is fulfilled:
   \begin{equation}
     \label{condeqvalid}
     \mbox{$a \neq \dzero$ and $a > v_{j}$}.
   \end{equation}
   And in this case the inequality (\ref{eqvalid}) is equivalent to:

   \begin{subequations}
     \begin{equation}
       \label{eqvalid2}
y_{j} \lesseqgtr a^{\otimes (-1)} \otimes \overline{v}^{\intercal} \otimes y,
     \end{equation}
   where $\overline{v}$ is the $n$-dimensional vector defined by:

   \begin{equation}
     \label{defvbar}
     \forall j', \overline{v}_{j'} := \left\{\bar{ll} \dzero & \mbox{ if $j'=j$} \\
                                                     v_{j'} & \mbox{ if $j' \neq j$.}

     \ear \right.
     \end{equation}
   \end{subequations}
 \end{propo}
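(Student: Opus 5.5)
The plan is to show directly that, under (\ref{condeqvalid}), inequality (\ref{eqvalid}) and (\ref{eqvalid2}) have the same solution set. The relation symbol $\lesseqgtr$ stands for either $\leq$ or $\geq$, so I treat both cases with the same scalar manipulations. First I split the right-hand side of (\ref{eqvalid}) as
\[
v^{\intercal}\otimes y = v_{j}\otimes y_{j} \oplus \overline{v}^{\intercal}\otimes y ,
\]
using definition (\ref{defvbar}). Since $a \neq \dzero$ and all quantities lie in $\real_{\dzero}$, $a$ is a finite real number. Hence multiplication by $a^{\otimes(-1)}$, i.e.\ subtraction of $a$, is an order isomorphism of $\real_{\dzero}$. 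It therefore preserves both $\leq$ and $\geq$ and distributes over $\oplus$. Consequently (\ref{eqvalid}) is equivalent to
\[
y_{j} \lesseqgtr (v_{j}-a)\otimes y_{j} \oplus a^{\otimes(-1)}\otimes\overline{v}^{\intercal}\otimes y .
\]
Here $v_{j}-a<\dun$, which holds by (\ref{condeqvalid}), with the convention $v_j-a=\dzero$ when $v_j=\dzero$.

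The key step is to remove the self-term $(v_j-a)\otimes y_j$. This is the scalar ($1\times 1$) instance of Result~\ref{lemSAT} with "matrix" $\alpha:=v_{j}-a$. The single elementary circuit has weight $\alpha<\dun$, so (\ref{condAinfini=O}) holds and $\alpha^{*}=\dun$. I write $\beta:=a^{\otimes(-1)}\otimes\overline{v}^{\intercal}\otimes y$. Note that $\beta$ does not involve $y_j$, and treat it as a fixed constant.

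For the case $\leq$, the inequality $y_j\leq \alpha\otimes y_j\oplus\beta$ means $y_j\in\mathcal{S}^{\leq}(\alpha,\beta)$. I would argue it directly rather than only quote the greatest element:
\begin{itemize}
\item If $y_j\leq\beta$, then $y_j \leq \alpha\otimes y_j \oplus \beta$ holds trivially.
\item Conversely, suppose $y_j>\beta$. If $y_j$ is finite, then $\alpha\otimes y_j<y_j$, so $\max(\alpha+y_j,\beta)<y_j$, which is a contradiction.
\item If instead $y_j=\dzero$, then $y_j\leq\beta$ trivially, and this case is consistent.
\end{itemize}
So the set is exactly $\{y_j\leq \alpha^*\otimes\beta=\beta\}$, in agreement with Result~\ref{lemSAT}. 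For the case $\geq$, the inequality $y_j\geq\alpha\otimes y_j\oplus\beta$ is equivalent to $y_j\geq\beta$ together with $y_j\geq \alpha\otimes y_j$. The second condition is automatic because $\alpha<\dun$, so the set is $\{y_j\geq\beta\}$, which is $\mathcal{S}^{\geq}(\alpha,\beta)$. Both cases give (\ref{eqvalid2}).

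The main obstacle is the boundary bookkeeping rather than any real difficulty. I must check that $y_j$ and $\beta$ may equal $\dzero$ without breaking the strict inequality argument. I must also note that "valid" is here partly a definition: the statement names the condition (\ref{condeqvalid}), and what requires proof is only the equivalence. Finally, I would remark why the condition is needed. If $a=\dzero$, division by $a$ is meaningless. If $a\le v_j$, the $\leq$ version is vacuous and the $\geq$ version forces $\overline{v}^{\intercal}\otimes y$ to be dominated in a way not of the form (\ref{eqvalid2}). This justifies the terminology.
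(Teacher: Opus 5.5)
Your proof is correct and follows essentially the same route as the paper: both isolate the self-term $v_{j}\otimes y_{j}$, divide by $a$, and remove that term via Result~\ref{lemSAT}, using that the single circuit has weight $a^{\otimes(-1)}\otimes v_{j}<\dun$. The paper does this by embedding the inequality into the $n\times n$ system $y \lesseqgtr V\otimes y$ and splitting $V$, whereas you work with the $1\times 1$ instance directly and check both inclusions of the solution sets by hand, a step the paper leaves implicit once it has identified the extremal element.

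One small caveat, which concerns only your closing remark: in the $\geq$ case with $a=v_{j}$, inequality (\ref{eqvalid}) is still equivalent to (\ref{eqvalid2}). So the strictness in (\ref{condeqvalid}) is a definitional choice there, not a necessity.
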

 \proof W.l.o.g we can assume $j=n$ in the inequality (\ref{eqvalid}). And the inequality (\ref{eqvalid}) is equivalent to

 \[
y \lesseqgtr V \otimes y,
 \]
 where $V$ is the $n \times n$-matrix defined by: 

 \[
 V:= \left(\bar{cc}
 \bI_{n-1} & \bO_{n-1,1} \\
 D & C
 \ear \right), \; D=\left(\bar{ccc} v_{1} & \cdots & v_{n-1}
 \ear \right), \; C=(a^{\otimes (-1)} \otimes v_{n}).
 \]
 Now we express $V \otimes y$ as follows:
 \[
V \otimes y = A \otimes y \oplus B \otimes u,
\]
with :
\[
A:= \left(\bar{cc}
 \bO_{n-1,n-1} & \bO_{n-1,1} \\
 \bO_{1,n-1} & C
 \ear \right),\; B:=\left(\bar{c} \bI_{n-1} \\
 D
 \ear \right), \; u=(y_{i})_{i=1}^{n-1}.
\]
And clearly by Result~\ref{lemSAT} the set $\{y: y \lesseqgtr A \otimes y \oplus B \otimes u\}$ admits
a smallest (or greatest) element iff condition (\ref{condAinfini=O}) is verified. Here, the condition
(\ref{condAinfini=O}) is equivalent to $\limk C^{\otimes (k)} = \bO_{1,1}$ which is equivalent
to $a^{\otimes (-1)} \otimes v_{n} < \dun$ because the matrix $C$ has only one elementary
circuit $n \rightarrow n$ of weight $a^{\otimes (-1)} \otimes v_{n}$. To conclude we just have to note that
by defining $\overline{v}^{\intercal}:= (v_{1}, \ldots, v_{n-1}, \dzero)$ the inequality
(\ref{eqvalid2}) is verified. \cqfd

\section{The minimization problem $\textsf{mpr}$}
\label{secmin}
At the step $0$ of the substitution the cone associated with the constraints system (\ref{mprContraintes}) of the $\textsf{mpr}$ problem
is denoted $\mathcal{C}(A^{\; +}, A^{ \; -})^{[0]}$ and is defined by:

\begin{subequations}
\begin{equation}
  \mathcal{C}(A^{+}, A^{-})^{[0]}:=\{w \in \overline{\real}_{\dzero}^{n+2}: A^{+ [0]} \otimes w \geq
  A^{-[0]} \otimes w\}
  \end{equation}
where:

\begin{equation}
  (A^{+}, A^{-}, w)^{[0]}:=\left(\bar{ccc}
  \dun & \boldsymbol{-\infty}^{\intercal} & \dzero \\
  \boldsymbol{-\infty} & A & b
  \ear\right), \; \left(\bar{ccc}
  \dzero & c^{\intercal} & c_{h} \\
  \boldsymbol{-\infty} & C & d
  \ear\right), \; \left(\bar{c}
z \\ x \\ h
  \ear\right).
  \end{equation}
  \end{subequations}

\noi

We use the following conventions. 

\begin{numerotation}
  \label{numMatcol}
  The rows of the matrices
$A^{+}, A^{-}$ are numbered from $0$ to $m$. The columns
  of the matrices $A^{+}, A^{-}$ are indexed by $j$, $j$ varying from $j=0$ to $j=n+1$.
  But sometimes it will be useful to use the variables $z,x_{1}, \ldots, x_{n},h$. The
  components of the vector $w$ are numbered from $0$ to $n+1$.
\end{numerotation}

The cost function at step $0$ of the substitution is denoted and defined by: $\textsf{cost}^{[0]}(x,h):= c^{\intercal} \otimes x \oplus c_{h} \otimes h$,
with $c_{h}=\dzero$. \\

The set of the stored linear equalities is denoted $\mathcal{L}^{[k]}$ at each step $k$ of the
substitution. And for $k=0$ we have $\mathcal{L}^{[0]}= \emptyset$. \\

The vector $x$ is called the vector of the remaining variables of the \textsf{mpr}. Of course at
step $0$ of the method: $x=(x_{j})_{j=1}^{n}$. To $x$ we associate the set of the remaining variables
denoted $\x$. And of course at step $0$ of the method: $\x^{[0]}=\{x_{1}, \ldots, x_{n}\}$. We have the
following equivalence:

\begin{equation}
x_{j} \notin \x \Leftrightarrow x_{j}=\dzero \mbox{ in the vector of remaining variables $x$}.
  \end{equation}

Because $x=\boldsymbol{+\infty} \in \mathcal{P}(A,b,C,d) \cup \{\boldsymbol{\pm \infty}\}$ and because
$\min = -\infty$ is an acceptable answer for the minimization problem \textsf{mpr} one make the
following assumption
for the parametrized reseach of a minimum reached by $x$.

\begin{Assum}[Parametrized reseach domain for the variables $x_{j}$]
  \label{AssumedomaineMINxj}
  For the research of a minimum for $z$ we can assume that $\forall x_{j} \in \x$: $x_{j} \in [\lambda, +\infty]$
  for some arbitrary $\dzero \leq \lambda \leq +\infty$. 
  \end{Assum}

The homogenization variable $h$ is not
substituable (recall). And the general scheme of the substitution for the
$\textsf{mpr}$ problem is to arrive after the $n$ substitutions of the
variables $x_{j}$ at the following situation:

\begin{equation}
  \bar{l}
  z \geq c_{h}^{[n]} \otimes h \\
  b^{[n]} \otimes h \geq d^{[n]} \otimes h. 
  \ear
  \end{equation}
Where $b^{[n]}, d^{[n]}$ are two $m$-dimensional column vectors. We then have the following cases.

\bit
\item Case $1$: $b^{[n]} \geq d^{[n]}$. In this case $h$ can be set to $\dun$ and $\min(z)=c_{h}^{[n]}$. 
\item Case $2$: $b^{[n]} \ngeq d^{[n]}$. In this case $h=+\infty$ is the only possible value for $h$ because
  $h > \dzero$ (see condition (\ref{cond-h-different-infini})) and
  $\min(z)=+\infty$.
\eit
This discussion leads to assume:

\begin{Assum}[Parametrized domain for $h$]
  \label{AssumedomaineMINh}
  For the $\textsf{mpr}$ problem we assume that the homogenization variable $h$ has a parametrized interval
  $[\mu, +\infty]$ for some arbitrary $\dzero < \mu \leq +\infty$. 
\end{Assum}

In conclusion, the parametrized domain of research of a minimum is de noted $R_{\lambda \mu}(x,h)$ and defined by:

\begin{subequations}
\begin{equation}
  \label{eqRxh}
R_{\lambda \mu}(x,h):= \times_{x_{j} \in \x} [\lambda, +\infty] \times [\mu, +\infty].
  \end{equation}
Where because $h$ must be $ > \dzero$ we can assume that the following condition holds:

\begin{equation}
  \label{eqmudominelambda}
  \forall \theta, \forall \beta \neq \dzero, \;  \exists \lambda, \mu: \;
  \theta \otimes \lambda < \beta \otimes \mu.
\end{equation}
\end{subequations}

\subsection{Study of the cone $\mathcal{C}(i):=\{z \geq c^{\intercal} \otimes x \oplus c_{h} \otimes h, \; a^{+}_{i,.} \otimes w
  \geq a^{-}_{i,.} \otimes w\}$}
\label{coneMinCi}

Based on the result of Proposition~\ref{propineqValid}
$\forall i=1, \ldots,m$, $\forall j=1, \ldots, n$ such that the following condition is
satisfied:

\begin{equation}
  \label{condExistgeq}
  \mbox{$a^{+}_{i,j} \neq \dzero$ and
    $a^{+}_{i,j} > a^{-}_{i,j}$}
\end{equation}

we define the
following valid inequality:

\begin{equation}
  \label{ineqImin}
I_{\geq}(x_{j},i):=\{a^{+}_{i,j} \otimes x_{j} \geq a^{-}_{i,.} \otimes w\}.
  \end{equation}
Under condition (\ref{condExistgeq}) the inequality $I_{\geq}(x_{j},i)$ can be rewritten as:

    \begin{equation}
I_{\geq}(x_{j},i)=\{x_{j} \geq f^{\geq}_{ij}(x,h)\}.
      \end{equation}

Where the $(x,h)$-linear function $f^{\geq}_{ij}(x,h)$ is defined by:

    \begin{subequations}
      \begin{equation}
        \label{deffij}
f^{\geq}_{ij}(x,h):= \ell_{ij}(x) \oplus r_{ij} \otimes h, 
    \end{equation}
      with $\ell_{ij}$ is the following $x$-linear function which does not
      depend on $x_{j}$:
    \begin{equation}
      \label{deflij}
\ell_{ij}(x):= v^{\intercal}_{ij} \otimes x,
    \end{equation}
    where $v_{ij}$ denotes the  $n$-dimensional vector such that:
    \begin{equation}
      \label{eqdefuij}
      \forall j'=1, \ldots, n: \; v_{ij,j'}=\left\{\bar{cc}
      (a^{+}_{i,j})^{\otimes (-1)} \otimes a^{-}_{i,j'} & \mbox{if $j' \neq j$} \\
      \dzero & \mbox{if $j'=j$}
      \ear \right.
    \end{equation}
   and the scalar $r_{ij}$ is defined by:
    \begin{equation}
      \label{eqdefrij}
r_{ij}:= (a^{+}_{i,j})^{\otimes (-1)} \otimes d_{i}.
      \end{equation}
    \end{subequations}

    Assume that $I_{\geq}(x_{j},i)$ is valid. Then, by replacing
    $x_{j}$ by $f^{\geq}_{ij}(x,h)$ in the cost function $c^{\intercal} \otimes x \oplus c_{h} \otimes h$ we define the following function:

    \begin{equation}
      \label{fzij}
f_{z,ij}(x,h):= \oplus_{j' \neq j} c_{j'} \otimes x_{j'} \oplus c_{j} \otimes f^{\geq}_{ij}(x,h) \oplus c_{h} \otimes h.
      \end{equation}
    By replacing $f^{\geq}_{ij}(x,h)$ in (\ref{fzij}) we obtain the new expression for $f_{z,ij}(x,h)$:

    \begin{equation}
      \label{fzij2}
f_{z,ij}(x,h):= \oplus_{j' \neq j} (c_{j'} \oplus c_{j} \otimes v_{ij,j'}) \otimes x_{j'} \oplus (c_{h} \oplus c_{j} \otimes r_{ij}) \otimes h.
      \end{equation}

The next Theorem will be useful to justify the substitution method developed in this paper.

    \begin{theo}[Saturation of an inequality]
      \label{theoMinCi}
      For all $x_{j}$ such that $I_{\geq}(x_{j},i)$ is valid in the sense of Proposition~\ref{propineqValid} we have:

      \begin{equation}
        \label{infimumzxj}
        \inf_{\{x: \; x_{j} \geq f^{\geq}_{ij}(x,h)\}} \left(\bar{c} c^{\intercal} \otimes x \oplus c_{h} \otimes h \\
        x_{j} \ear \right) \; = \left(\bar{c} f_{z,ij}(x,h) \\
        f^{\geq}_{ij}(x,h)
        \ear \right).
        \end{equation}
      And the infimum is reached at $x$ such that we have:
      \begin{equation}
        \label{substitxj}
x_{j} = f^{\geq}_{ij}(x,h).
        \end{equation}
      
    \end{theo}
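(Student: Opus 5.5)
The plan is to read the infimum in (\ref{infimumzxj}) as an infimum over the single coordinate $x_{j}$, with the remaining variables $x_{j'}$, $j' \neq j$, and $h$ held fixed. This is legitimate because, by (\ref{deflij})--(\ref{eqdefuij}), $f^{\geq}_{ij}(x,h)$ does not depend on $x_{j}$: the coefficient $v_{ij,j}$ is $\dzero$. Validity of $I_{\geq}(x_{j},i)$ in the sense of Proposition~\ref{propineqValid}, namely condition (\ref{condExistgeq}), is exactly what lets us rewrite the original row inequality $a^{+}_{i,.}\otimes w \geq a^{-}_{i,.}\otimes w$, restricted to the disjunct $a^{+}_{i,j}\otimes x_{j} \geq a^{-}_{i,.}\otimes w$, as $x_{j} \geq f^{\geq}_{ij}(x,h)$. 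Here we use (\ref{eqvalid2}) with $a = a^{+}_{i,j}$, where the self-loop term $a^{-}_{i,j}\otimes x_{j}$ is absorbed because $(a^{+}_{i,j})^{\otimes(-1)}\otimes a^{-}_{i,j} < \dun$. So the feasible set for $x_{j}$ is the interval $[f^{\geq}_{ij}(x,h), +\infty]$, an element of the family $\mathcal{I}$ of (\ref{defIntervalu+infini}).

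First key step: the second component. The infimum of $x_{j}$ over $[f^{\geq}_{ij}(x,h),+\infty]$ is its left endpoint $f^{\geq}_{ij}(x,h)$. It is attained, since the interval is closed in $\overline{\real}_{\dzero}$, and this gives (\ref{substitxj}).

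Second key step: the first component. Write the cost as $g(x_{j}) := c_{j}\otimes x_{j} \oplus K$ with $K := \oplus_{j'\neq j} c_{j'}\otimes x_{j'} \oplus c_{h}\otimes h$, which is constant in $x_{j}$. The map $g$ is nondegreasing, since $\otimes$ and $\oplus$ are monotone for the natural order. Hence its infimum over $[f^{\geq}_{ij},+\infty]$ is $g(f^{\geq}_{ij}(x,h))$, attained at the same point. The scalar-multiplication and addition rules of interval arithmetic give the same conclusion: $c_{j}\otimes[f^{\geq}_{ij},+\infty]\oplus[K,K]$ has lower end $c_{j}\otimes f^{\geq}_{ij}\oplus K$. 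By definition (\ref{fzij}) this is $f_{z,ij}(x,h)$. Expanding $f^{\geq}_{ij}$ via (\ref{deffij})--(\ref{eqdefrij}) and distributing $c_{j}$ over $\oplus$ recovers (\ref{fzij2}). Because both components are minimized at the same point $x_{j}=f^{\geq}_{ij}(x,h)$, the vector infimum in (\ref{infimumzxj}) is attained componentwise simultaneously, which gives the claim.

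The main obstacle is the handling of the extended values $\pm\infty$ and the rewriting step. One must check that dividing by $a^{+}_{i,j}$ is legitimate: it is finite and non-$\dzero$ by (\ref{condExistgeq}). One must also check that the equivalence of Proposition~\ref{propineqValid} holds when some $x_{j'}$ or $h$ equals $+\infty$, using the conventions $\dzero\otimes+\infty=\dzero$ and $b\otimes+\infty=+\infty$. I would handle this by a short case split: if $f^{\geq}_{ij}(x,h)=+\infty$, the feasible set is $\{+\infty\}$ and both sides of (\ref{infimumzxj}) equal the value at $x_{j}=+\infty$. Otherwise the argument above applies verbatim.
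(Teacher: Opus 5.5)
Your proposal is correct and follows essentially the same route as the paper. The paper's proof only observes that $x_{j} \geq f^{\geq}_{ij}(x,h)$ implies $c_{j} \otimes x_{j} \geq c_{j} \otimes f^{\geq}_{ij}(x,h)$, and that the cost, being $(x,h)$-linear, is non-decreasing, so the infimum is attained at $x_{j} = f^{\geq}_{ij}(x,h)$; your version adds what the paper leaves implicit, namely that the infimum runs over $x_{j}$ alone with the other coordinates and $h$ fixed (legitimate since $v_{ij,j} = \dzero$), together with checks of the rewriting step and the $\pm\infty$ conventions.
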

    \proof We have the following implication:
    \[
    x_{j} \geq f^{\geq}_{ij}(x,h) \Rightarrow \forall \alpha \in \real_{\dzero}, \; \alpha \otimes x_{j} \geq \alpha \otimes
    f^{\geq}_{ij}(x,h). 
    \]
    In particular, the above implication is true for $\alpha = c_{j}$. \\
All the $(x,h)$-linear functions are non-decreasing. Noticing that the cost function:
$(x,h) \mapsto c^{\intercal} \otimes x \oplus c_{h} \otimes h$ is a $(x,h)$-linear function the
result is proved.  \cqfd

\subsection{How to choose the next variable for substitution in \textsf{mpr}}

\subsubsection{Classification of the variables and the linear functions of \textsf{mpr}}

    Based on the result of Proposition~\ref{propineqValid},
    $\forall i=1, \ldots,m$, $\forall j=1, \ldots, n$ such that the following condition is
satisfied:

\begin{equation}
  \label{condExistleq}
\mbox{$a^{-}_{i,j} \neq \dzero$ and
  $a^{+}_{i,j} < a^{-}_{i,j}$}
\end{equation}

we define the following valid inequality:

\begin{equation}
  \label{ineqIminmin}
I_{\leq}(x_{j},i):=\{a^{-}_{i,j} \otimes x_{j} \leq a^{+}_{i,.} \otimes w\}.
  \end{equation}
Under condition (\ref{condExistleq}) the result of Proposition~\ref{propineqValid} allows us to
remove $x_{j}$ from the expression $a^{+}_{i,.} \otimes w$. This new $(x,h)$-linear function
which does not depend on $x_{j}$ is denoted $f_{ij}^{\leq}(x,h)$. The expression is not given here because
the only information to know in the $\textsf{mpr}$ problem is that this function is well defined.

We use the {\bf convention} that $I_{\geq}(x_{j},i) \neq \emptyset$ (resp. $I_{\leq}(x_{j},i) \neq \emptyset$)
equivalently means that $I_{\geq}(x_{j},i)$ (resp. $I_{\leq}(x_{j},i)$) is valid in the sense of Proposition~\ref{propineqValid}. 

\begin{defi}
  \label{defHierarchyxj}
  A variable $x_{j}$ is said to be

  \bit
\item $\min$-bounded if $I_{\geq}(x_{j}):=\cup_{i=1}^{m} I_{\geq}(x_{j},i) \neq \emptyset$. Otherwise
  $x_{j}$ is said to be $\min$-unbounded.

\item $\max$-bounded if $I_{\leq}(x_{j}):=\cup_{i=1}^{m} I_{\leq}(x_{j},i) \neq \emptyset$. Otherwise
  $x_{j}$ is said to be $\max$-unbounded.

  \item dominating if $x_{j}$ is $\min$-and-$\max$-bounded.
  \eit

  \end{defi}

From the previous definition we also define:

\begin{defi}[Bounded problem]
  A $(\max,+)$-linear programming problem (minimization or maximization) is
  bounded if the following condition holds.

  \begin{equation}
    \label{eqBoundedPB}
\boldsymbol{B}: \; \forall x_{j} \in \x, \; I_{\geq}(x_{j}) \cup I_{\leq}(x_{j}) \neq \emptyset.
    \end{equation}

  \end{defi}

The dominating variables have a variation domain smaller (in the sense of the set
inclusion) than the others variables. So that
there exists an order between variables denoted $\preceq_{var}$ based on their variation domain such that:

\begin{equation}
  \label{eqOrdreVar}
  \{x_{j} \in \x: \mbox{$I_{\geq}(x_{j})\neq \emptyset$ and $I_{\leq}(x_{j}) \neq \emptyset$}\} \preceq_{var}
  \{x_{j} \in \x: \mbox{$I_{\geq}(x_{j})\neq \emptyset$ and $I_{\leq}(x_{j}) =\emptyset$}\}.
\end{equation}

And of course we also mentioned that: \\
$\{x_{j} \in \x: \mbox{$I_{\geq}(x_{j})\neq \emptyset$ and $I_{\leq}(x_{j}) =\emptyset$}\}
  \preceq_{var}
  \{x_{j} \in \x: \mbox{$I_{\geq}(x_{j}) = \emptyset$ and $I_{\leq}(x_{j}) =\emptyset$}\}$.

  \begin{defi}
    \label{defhBhU}
  A non null $(x,h)$-linear function $f: (x,h) \mapsto \alpha^{\intercal} \otimes x \oplus \beta \otimes h$ is said to be
  $h$-bounded if $\beta \neq \dzero$. Otherwise the function $f$ is
  said to be $h$-unbounded.
  \end{defi}

We denote $h\B$ the set of $h$-bounded non null linear functions and we denote $h\U$ the set of
non null $h$-unbounded linear functions. And $(h\B, h\U)$ is a partition of the set of all non
null $(x,h)$-linear functions. We have the following interval calculus results.

\bit

\item $\forall f= \alpha^{\intercal} \otimes x \oplus \beta \otimes h \in h\B$, using interval calculus formulae we have:
\begin{subequations}
  \begin{equation}
    \label{eqfRdomaine}
  f(R_{\lambda \mu}(x,h)) =[\oplus_{i} \alpha_{i} \otimes \lambda \oplus \beta \otimes \lambda , +\infty],
  \end{equation}
  and by assumption ((\ref{eqmudominelambda}) with $\theta= \oplus_{i} \alpha_{i}$) on $\lambda$ and $\mu$ we have:
  
  \begin{equation}
    \label{eqDomainhB}
\forall \alpha \in \real_{\dzero}^{n}, \forall \beta \neq \dzero: \; f(R_{\lambda \mu}(x,h)) =[\beta \otimes \mu , +\infty]
    \end{equation}
  Let us stress that this interval does not depend on the $n$-dimensional row vector $\alpha$ and
  by assumption (\ref{eqmudominelambda}):
  \begin{equation}
    \label{eqDominclus}
    [\beta \otimes \mu , +\infty] \subset [\oplus_{i} \alpha_{i} \otimes \lambda, +\infty].
    \end{equation}
\end{subequations}

  \item  And $\forall f= \alpha^{\intercal} \otimes x \in h\U$:

    \begin{equation}
    f(R_{\lambda \mu}(x,h)) =[\oplus_{i} \alpha_{i} \otimes \lambda, +\infty].
    \end{equation}
    \eit

From this above results we have the following
global hierarchy between the non null linear functions denoted $\preceq_{fct}$ which is based on their
value domain:

\begin{equation}
  \label{eqOrdrefct}
\{f: f \in  h\B \} \preceq_{fct} \{f: f \in h\U \}.
  \end{equation}

\subsubsection{Choosing a variable for a new substitution after $k$ substitutions in \textsf{mpr}}
\label{subsubChoixvar}

In this section we describe the procedure for choosing a remaining variable to be
substituted. Assuming that $k$ substitutions have been done the cone associated with the
$\textsf{mpr}$ is denoted $\mathcal{C}(A^{+}, A^{-})^{[k]}$. The set of linear equalities at step
$k$, say $\mathcal{L}^{[k]}$, contains $k$ equalities of the form $\{x_{j}=f\}$ where $f$ is a $(x,h)$-linear
function. To lighten notations the superscript $[k]$ will not be applied on the vector of the
remaining variables $x$ and the set of the remaining variables $\x$ in the sequel. Recall that we use
numerotation convention~\ref{numMatcol} p. \pageref{numMatcol}.

In the next theorem we characterize a normal end of the substitution process.

\begin{theo}[Minimality and reachability at $\boldsymbol{-\infty}$ of $h \mapsto c_{h} \otimes h$]
  \label{thmchattaignable}
    The function $h \mapsto c_{h} \otimes h$ is the
  lower bound of the cost function $(x,h) \mapsto c^{\intercal}_{\x} \otimes x \oplus c_{h} \otimes h$ which is attained
  at $x=\boldsymbol{-\infty}$ iff the following condition holds:

  \begin{equation}
    \label{eqchok}
A_{[|1,m|] h}^{+[k]} \geq A_{[|1,m|] h}^{-[k]}.
    \end{equation}

  \end{theo}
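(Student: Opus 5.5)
The plan is to prove both directions by evaluating the constraint system of $\mathcal{C}(A^{+},A^{-})^{[k]}$ at the point $x=\boldsymbol{-\infty}$, that is, with every remaining variable $x_{j}\in\x$ set to $\dzero$. Two facts are used throughout: every $(x,h)$-linear function is non-decreasing, and $h>\dzero$ by condition (\ref{cond-h-different-infini}). Since $c^{\intercal}_{\x}\otimes x \oplus c_{h}\otimes h \geq c_{h}\otimes h$ for every $(x,h)$, the map $h\mapsto c_{h}\otimes h$ is always a pointwise lower bound of the cost function. This bound is attained exactly when $c^{\intercal}_{\x}\otimes x\leq c_{h}\otimes h$, and in particular at $x=\boldsymbol{-\infty}$. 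So the content of the theorem is that the point $(x,h)=(\boldsymbol{-\infty},h)$ is feasible for the rows $i\in[|1,m|]$ if and only if (\ref{eqchok}) holds.

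For sufficiency, assume (\ref{eqchok}). Row $i$ of the cone reads $a^{+[k]}_{i,\x}\otimes x\oplus a^{+[k]}_{i,h}\otimes h \geq a^{-[k]}_{i,\x}\otimes x\oplus a^{-[k]}_{i,h}\otimes h$. At $x=\boldsymbol{-\infty}$ it reduces to $a^{+[k]}_{i,h}\otimes h\geq a^{-[k]}_{i,h}\otimes h$. Because $h$ is a finite positive (non-$\dzero$) scalar, this is equivalent to $a^{+[k]}_{i,h}\geq a^{-[k]}_{i,h}$, which is (\ref{eqchok}). Hence $(\boldsymbol{-\infty},h)$ lies in the cone for every admissible $h$ in the parametrized domain of Assumption~\ref{AssumedomaineMINh}. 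Row $0$ is $z\geq c^{\intercal}_{\x}\otimes x\oplus c_{h}\otimes h$; at this point it gives $z\geq c_{h}\otimes h$, so the lower bound is attained. The stored equalities in $\mathcal{L}^{[k]}$ then recover the substituted variables as $(x,h)$-linear functions evaluated at $(\boldsymbol{-\infty},h)$, which by Theorem~\ref{theoMinCi} are the infimal values.

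For necessity, suppose the lower bound $c_{h}\otimes h$ is attained at $x=\boldsymbol{-\infty}$. Then $(\boldsymbol{-\infty},h)$ must satisfy every row $i\in[|1,m|]$ for some $h>\dzero$. The computation above shows this forces $a^{+[k]}_{i,h}\otimes h\geq a^{-[k]}_{i,h}\otimes h$. Cancelling $h$, which is legitimate since $h\neq\dzero$ and $h\neq+\infty$ (the case $h=+\infty$ being the degenerate Case 2 with $\min=+\infty$), gives $a^{+[k]}_{i,h}\geq a^{-[k]}_{i,h}$ for each $i$, i.e.\ (\ref{eqchok}).

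The main obstacle is the cancellation of $h$ and the handling of the top element. In the complete dioid, $h=+\infty$ makes every row with $a^{+[k]}_{i,h}\neq\dzero$ trivially satisfied, so (\ref{eqchok}) would not be forced. I will therefore restrict explicitly to finite $h$ (say $h=\dun$, as in Case 1) when defining ``attained'', and treat $h=+\infty$ separately as the non-attained situation. I also expect to check the rows where both $h$-coefficients are $\dzero$: there both sides vanish at $x=\boldsymbol{-\infty}$, so these rows impose no constraint and are consistent with (\ref{eqchok}).
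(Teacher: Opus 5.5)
Your proposal is correct and follows the paper's own argument. You evaluate rows $1,\dots,m$ of the cone at $x=\boldsymbol{-\infty}$ so that only the $h$-column comparison $A^{+[k]}_{[|1,m|]h}\otimes h \geq A^{-[k]}_{[|1,m|]h}\otimes h$ remains, and this admits an admissible $h$ other than $+\infty$ exactly when (\ref{eqchok}) holds; your explicit cancellation of a finite $h>\dzero$ and separate treatment of $h=+\infty$ simply make precise what the paper phrases as ``$h=+\infty$ is not the only solution.''
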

\proof Considering the system of inequalities $A^{+[k]} \otimes w \geq A^{-[k]} \otimes w$. The
$[|1,m|] \times \x \cup \{h\}$ part
of the system is expressed as follows:

\[
A_{[|1,m|] \x}^{+[k]} \otimes x \oplus A_{[|1,m|] h}^{+[k]} \otimes h \geq A_{[|1,m|] \x}^{-[k]} \otimes x \oplus A_{[|1,m|] h}^{-[k]}
\otimes h.
\]
We have $z=c_{h} \otimes h$ at $x=\boldsymbol{-\infty}$ iff $(z,\boldsymbol{-\infty},h) \in \mathcal{C}(A^{+}, A^{-})^{[k]}$.
And based on the above inequality it means that $A_{[|1,m|] h}^{+[k]} \otimes h \geq A_{[|1,m|] h}^{-[k]} \otimes h$. And
this last inequality does not imply that $h=+\infty$ is the only solution iff $A_{[|1,m|] h}^{+[k]} \geq A_{[|1,m|] h}^{-[k]}$.
And the equivalence is proved. \cqfd \\

Before stating the next Theorem we need the following preliminary Lemma.

\begin{lemm}
      \label{lemmInegalitesgeq}
      Let us consider two tropical $(x,h)$-linear functions $f_{1}$ and $f_{2}$, a variable $y$ and the
      two inequalities $I_{\geq}(y,1):=(y \geq f_{1}(x,h))$ and $I_{\geq}(y,2):=(y \geq f_{2}(x,h))$. We assume that
      $(x,h)$ varies in some domain and that 
      the image of the domain by $f_{i}$ is the interval $\sigma_{i}:=[\lambda_{i},+\infty]$, $i =1,2$. 

      W. l. o. g. we can assume that $\lambda_{1} \leq \lambda_{2}$. Then, we have the following results.

\bit
\item (1). The maximum interval $\textsf{Max}_{\sigma \in \mathcal{I}}
  (\forall y \in \sigma \; \exists (x,h) \;
  (y \geq f_{1}(x,h) \mbox{ and } (y \geq f_{2}(x,h)) \mbox{ is true})$ is equal to
            $\textsf{Min}(\sigma_{1}, \sigma_{2})=[\lambda_{2},+\infty]$.

             \item (2). The maximum interval $\textsf{Max}_{\sigma \in \mathcal{I}}
            (\forall y \in \sigma \; \exists (x,h) \;
            (y \geq f_{1}(x,h) \mbox{ or } (y \geq f_{2}(x,h)) \mbox{ is true})$ is equal to
               $\textsf{Max}(\sigma_{1}, \sigma_{2})=[\lambda_{1},+\infty]$.
\eit
      
\end{lemm}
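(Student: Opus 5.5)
The plan is to reduce both statements to set identities for the families $\mathcal{I}$ and then apply Proposition~\ref{propMinInterval} together with (\ref{eq-intersection=Min-interval}) and (\ref{eq-union=Max-interval}). For $i=1,2$ put $S_{i}:=\{y: \exists (x,h) \mbox{ in the domain with } y \geq f_{i}(x,h)\}$. Since $f_{i}$ maps the domain onto $\sigma_{i}=[\lambda_{i},+\infty]$, the first step is to show $S_{i}=\sigma_{i}$. If $y \geq \lambda_{i}$, some $(x,h)$ satisfies $f_{i}(x,h)=\lambda_{i}\leq y$. Conversely, if $y\geq f_{i}(x,h)$ then $y\geq\lambda_{i}$, because every value $f_{i}(x,h)$ lies in $\sigma_{i}$. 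So the set of $y$ for which the single inequality $I_{\geq}(y,i)$ can be satisfied is exactly $\sigma_{i}$.

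For (2), the disjunction is handled pointwise. A value $y$ satisfies ``$\exists(x,h)$: $y\geq f_{1}(x,h)$ or $y\geq f_{2}(x,h)$'' iff $y\in S_{1}\cup S_{2}=\sigma_{1}\cup\sigma_{2}$, since the existential quantifier distributes over ``or''. Hence the admissible $\sigma\in\mathcal{I}$ are exactly those with $\sigma\subseteq\sigma_{1}\cup\sigma_{2}$. By (\ref{eq-union=Max-interval}) this union equals $[\lambda_{1},+\infty]$, which is itself in $\mathcal{I}$, so the maximum is $\textsf{Max}(\sigma_{1},\sigma_{2})$.

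For (1), the inclusion of the admissible set in $S_{1}\cap S_{2}$ is immediate: any $y$ admitting a common $(x,h)$ lies in both $S_{i}$, hence in $\sigma_{1}\cap\sigma_{2}=[\lambda_{2},+\infty]$ by (\ref{eq-intersection=Min-interval}). This shows that no admissible $\sigma$ can be larger than $[\lambda_{2},+\infty]$.

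The main obstacle is the reverse inclusion, because the two inequalities must hold at the same $(x,h)$. I would take $(x,h)$ to be the minimal point of the domain (all coordinates $\lambda$, $h=\mu$), i.e.\ the point of $R_{\lambda\mu}(x,h)$ at which, by the interval calculus (\ref{eqfRdomaine})--(\ref{eqDomainhB}), every non-decreasing $(x,h)$-linear function attains the lower end of its image. At that point $f_{1}=\lambda_{1}\leq\lambda_{2}$ and $f_{2}=\lambda_{2}$. Therefore every $y\geq\lambda_{2}$ satisfies both inequalities there, and $[\lambda_{2},+\infty]$ is admissible. This step uses monotonicity and a common minimiser for both functions on a box-shaped domain, so I would state it for $R_{\lambda\mu}(x,h)$, which is a product of intervals $[\cdot,+\infty]$. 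With both inclusions, the maximum in (1) is $\textsf{Min}(\sigma_{1},\sigma_{2})=[\lambda_{2},+\infty]$.
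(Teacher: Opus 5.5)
Your proof is correct. Part (2) is the paper's argument made explicit. The paper's proof only remarks that the admissible $\sigma\in\mathcal{I}$ are those contained in $\sigma_{1}\cup\sigma_{2}$ (resp. $\sigma_{1}\cap\sigma_{2}$ for (1)), and reads off the answer from (\ref{eq-union=Max-interval}) and (\ref{eq-intersection=Min-interval}).

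The real difference is in (1). The paper's one-line proof drops the quantifier $\exists (x,h)$, so it tacitly lets the two inequalities be witnessed by different points. You observe that the conjunction needs a single witness. You supply it as the least corner of the box $R_{\lambda\mu}(x,h)$, where every non-decreasing $(x,h)$-linear function sits at the lower end of its image. This uses only monotonicity, not the particular formulas (\ref{eqfRdomaine})--(\ref{eqDomainhB}).

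Restricting to a box is not a weakness of your route, because (1) is false for an arbitrary domain, which the statement literally allows. For instance, take $f_{1}(x,h)=x_{1}$ and $f_{2}(x,h)=x_{2}$ on $\{x_{1}\geq\dun,\ x_{2}\geq\dun,\ x_{1}\oplus x_{2}\geq 10\}$. Then $\sigma_{1}=\sigma_{2}=[\dun,+\infty]$, since $(t,10)$ gives $f_{1}=t$. But a common witness for $y\geq x_{1}$ and $y\geq x_{2}$ exists only when $y\geq 10$, so the maximal admissible interval is $[10,+\infty]\neq\textsf{Min}(\sigma_{1},\sigma_{2})$.

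The lemma is used only on box domains $R^{\T}_{\lambda\mu}(x,h)$ in Theorem~\ref{theoNTZmin}, so your version is exactly what that theorem needs. The same least-corner argument, with all the $f_{z,ij}$ simultaneously at their lower ends, is what legitimately justifies the many-function form $\theta=\textsf{Min}_{i}\,\textsf{Max}_{j}$ used there.
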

\proof \\

\noi
Proof of (1). We just have to remark that $\forall y \in \sigma \; (y \geq f_{1}(x,h) \mbox{ and }
(y \geq f_{2}(x,h)) \mbox{ is true})$ for $\sigma \in \mathcal{I}$ iff
$\sigma \subseteq [\lambda_{2},+\infty]= \textsf{Min}(\sigma_{1}, \sigma_{2})$. \\
Proof of (2). We just have to remark that $\forall y \in \sigma \;
(y \geq f_{1}(x,h) \mbox{ or } (y \geq f_{2}(x,h)) \mbox{ is true})$ for $\sigma \in \mathcal{I}$ iff
$\sigma \subseteq [\lambda_{1},+\infty]=\textsf{Max}(\sigma_{1}, \sigma_{2})$. \\

\cqfd \\

In the next Theorem we present the proof of the optimal choice for
one or several variables which can be substituted. Each variable is associated with an inequality
which belongs to a given set of inequalities. 

\begin{theo}
\label{theoNTZmin}
Let $\I$ be a subset of valid inequalities $I_{\geq}(x_{j},i)$, ie. a set
such that:
\begin{equation}
  \I^{\geq} \subseteq I_{\geq}(\x).
\end{equation}
Where
\begin{equation}
  \label{eqtteslesgeq}
I_{\geq}(\x):= \{(i,j): \mbox{ $x_{j} \in \x$ and $I_{\geq}(x_{j},i)$ is valid} \}.
  \end{equation}

We have the following implication:

\begin{equation}
  \label{implicationthmNTZ}
\mbox{ $I_{\geq}(x_{j},i)\neq \emptyset$ or equivalently $I_{\geq}(x_{j},i)$ is valid} \Rightarrow z \geq f_{z,ij}(x,h).
  \end{equation}
Recalling that $f_{z,ij}(x,h)$ is defined by (\ref{fzij2}). \\

Let $\T$ and $\T'$ denoting either $\B$ or $\U$, respectively. The $2$-tuple $(\T,\T')$ is defined
as a function of the set $\I$ according
to $\preceq_{fct}$ as follows (recalling the priority to the cost function).

First, let us define $\T$ as follows:

 \begin{equation}
    \label{eqdefTgeq}
    \T:=\left\{\bar{ll} \B & \mbox{ if $\exists (i,j) \in \I^{\geq}$ s.t. $f_{z,ij} \in h\B$} \\
    \U & \mbox{ otherwise}.
    \ear \right.
    \end{equation}
  
Then, let us define the following set:

\begin{equation}
  \label{mathcalTgeq}
  \mathcal{T}(\T,\I^{\geq}):=\{(i,j) \in \I^{\geq}: f_{z,ij} \in h\T \}.
  \end{equation}

We define the interval $\theta$ as:
   \begin{subequations}
     \begin{equation}
       \label{eqthetageq}
       \theta:= \textsf{Min}_{\{i: (i,j) \in \mathcal{T}(\T,\I^{\geq})\}} \theta_{i}
     \end{equation}
     with $\forall i: (i,j) \in \mathcal{T}(\T,\I^{\geq})$: 
   \begin{equation}
     \label{eqthetaIgeq}
\theta_{i}:= \textsf{Max}_{\{j: (i,j) \in \mathcal{T}(\T,\I^{\geq})\}} f_{z,ij}(R^{\T}_{\lambda \mu}(x,h)).
     \end{equation}

   \end{subequations}
   
   Then, we define the following logical decomposition of $\mathcal{T}(\T,\I^{\geq})$
   as follows:

     \begin{subequations}
\begin{equation}
  \label{eqdecTkIlogleq0}
  \mathcal{T}^{z}_{log}(\T, \I^{\geq}):= \textsf{AND}_{\{i:(i,j) \in \mathcal{T}(\T, \I^{\geq})\}} \mathcal{T}^{z}_{log}(\T, \I^{\geq})(i),
\end{equation}
with
\begin{equation}
    \label{eqdecTkIlogleq1}
    \mathcal{T}^{z}_{log}(\I, \I^{\geq})(i):= \textsf{OR}_{\{j:(i,j) \in \mathcal{T}(\I, \I^{\geq})\}} (z \geq f_{z,ij}).
      \end{equation}

     \end{subequations}

     We have the following results:

     \bit

   \item (1). The maximum interval
     $\textsf{Max}_{\sigma \in \mathcal{I}} (\sigma \mbox{ s.t. }\mbox{$\mathcal{T}^{z}_{log}(\T, \I^{\geq})$ is true})$
     is reached at $\sigma=\theta$.
\item (2). For all $i$ such that $(i,j) \in \mathcal{T}(\T, \I^{\geq})$ \\
  the maximum interval
  $\textsf{Max}_{\sigma \in \mathcal{I}} (\sigma \mbox{ s.t. } \mbox{$\mathcal{T}^{z}_{log}(\T, \I^{\geq})(i)$ is true})$
  is reached at $\sigma=\theta_{i}$.
     \eit
Where we use the following {\bf notation conventions}: \\
     \noi
     ``$\sigma \mbox{ s.t. } \mbox{$\mathcal{T}^{z}_{log}(\T, \I^{\geq})(i)$ is true}$'' means that: \\
     $\forall z \in \sigma \; \exists (x,h) \in R^{\T}_{\lambda \mu}(x,h): \;
     \textsf{OR}_{\{j:(i,j) \in \mathcal{T}(\I, \I^{\geq})\}} (z \geq f_{z,ij} (x,h))$ holds. \\
And
     \noi
     ``$(\sigma \mbox{ s.t. }\mbox{$\mathcal{T}^{z}_{log}(\T, \I^{\geq})$ is true})$'' means that:\\
     $\forall z \in \sigma \exists (x,h) \in R^{\T}_{\lambda \mu}(x,h): \\
     \textsf{AND}_{\{i:(i,j) \in \mathcal{T}(\T, \I^{\geq})\}}
     [ \textsf{OR}_{\{j:(i,j) \in \mathcal{T}(\I, \I^{\geq})\}} (z \geq f_{z,ij} (x,h)) ]$ holds. \\

   From the computation of $\theta$ the following cases may occur.

   \bit
 \item If $\textsf{argMin}(\theta) =\{(i^{*},j^{*})\}$ then
the set of substituable variables (which also contains the row
     where the variable appears in the constraints system) $S_{\theta}^{\geq}$ is defined by:
   \begin{equation}
          \label{Sthetageq}
S_{\theta}^{\geq}:= \textsf{argMin}(\theta), 
     \end{equation}
   which is of course a singleton.

   \item Otherwise $\textsf{n}(\textsf{argMin}(\theta)) \geq 2$ then let us define $\T'$ as follows:

   \begin{equation}
  \label{eqdefTT'geq}
  \T':= \left\{\bar{ll} \B & \mbox{ if $\exists (i,j) \in \textsf{argMin}(\theta)$ s.t. $f^{\geq}_{ij} \in h\B$} \\
  \U & \mbox{ otherwise.}
  \ear\right.
  \end{equation}
Then, let us define the following set:

\begin{equation}
  \label{mathcalTprimegeq}
  \mathcal{T}'(\T',\textsf{argMin}(\theta)):=\{(i,j) \in \textsf{argMin}(\theta): f^{\geq}_{ij} \in h\T' \}.
  \end{equation}

   Let us define interval $\theta'$ as:
   \begin{subequations}
     \begin{equation}
         \label{eqthetaprimegeq}
\theta':=\textsf{Min}_{\{i: (i,j) \in \mathcal{T}'(\T',\textsf{argMin}(\theta))\}} \theta'_{i},
     \end{equation}
     with
   \begin{equation}
     \label{eqthetaIprimegeq}
     \theta'_{i}:=\textsf{Max}_{\{j: (i,j) \in \mathcal{T}'(\T',\textsf{argMin}(\theta))\}} f^{\geq}_{ij}( R^{\T'}_{\lambda \mu}(x,h)).
     \end{equation}

\end{subequations}

Then, we define the following logical decomposition of $\mathcal{T}'(\T',\textsf{argMin}(\theta))$
   as follows:

     \begin{subequations}
\begin{equation}
  \label{eqdecTprimekIlogleq0}
  \mathcal{T}'_{log}(\T',\textsf{argMin}(\theta)):= \textsf{AND}_{\{i:(i,j) \in \mathcal{T}'(\T',\textsf{argMin}(\theta))\}}
  \mathcal{T}'_{log}(\T',\textsf{argMin}(\theta))(i),
\end{equation}
with
\begin{equation}
    \label{eqdecTprimekIlogleq1}
    \mathcal{T}'_{log}(\T',\textsf{argMin}(\theta))(i):= \textsf{OR}_{\{j:(i,j) \in \mathcal{T}'(\T',\textsf{argMin}(\theta))\}}
    (x_{j} \geq f^{\geq}_{ij})).
      \end{equation}

     \end{subequations}

          We have the following results:

     \bit

   \item (1'). The maximum interval
     $\textsf{Max}_{\sigma \in \mathcal{I}} (\sigma \mbox{ s.t. }\mbox{$\mathcal{T}'_{log}(\T',\textsf{argMin}(\theta))$ is true})$
     is reached for $\sigma=\theta'$.
\item (2'). For all $i$ such that $(i,j) \in  \mathcal{T}'(\T',\textsf{argMin}(\theta))$ the maximum interval
   $\textsf{Max}_{\sigma \in \mathcal{I}} (\sigma \mbox{ s.t. } \mbox{$\mathcal{T}'_{log}(\T',\textsf{argMin}(\theta))(i)$ is true})$ is reached
  for $\sigma=\theta'_{i}$.
     \eit
Where we use the following {\bf notation conventions}: \\
     \noi
``$\sigma \mbox{ s.t. } \mbox{$\mathcal{T}'_{log}(\T',\textsf{argMin}(\theta))(i)$ is true}) $'' means that: \\
     $\forall z \in \sigma \; \exists (x,h) \in R^{\T}_{\lambda \mu}(x,h): \;
     \textsf{OR}_{\{j:(i,j) \in \mathcal{T}'(\T',\textsf{argMin}(\theta))\}} (x_{j} \geq f_{ij} (x,h))$ holds. \\
And
     \noi
     ``$(\sigma \mbox{ s.t. } \mbox{$\mathcal{T}'_{log}(\T',\textsf{argMin}(\theta))(i)$ is true})$'' means that: \\
     $\forall z \in \sigma \; \exists (x,h) \in R^{\T}_{\lambda \mu}(x,h)$: \\
     $\textsf{AND}_{\{i: (i,j) \in \mathcal{T}'(\T',\textsf{argMin}(\theta))\}}
     [   \textsf{OR}_{\{j:(i,j) \in \mathcal{T}'(\T',\textsf{argMin}(\theta))\}}(x_{j} \geq f_{ij} (x,h)) ]$ holds. \\

     If $S_{\theta'}^{\geq}$ denotes the set of subsituable variables (which also contains the row
     where the variable appears in the constraints system) we have:

   \begin{equation}
     \label{Sthetaprimeleq}
S_{\theta'}^{\geq}:= \textsf{argMin}(\theta').
     \end{equation}

   \eit

\end{theo}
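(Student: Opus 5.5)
The plan is to reduce every claim of Theorem~\ref{theoNTZmin} to interval calculus in the family $\mathcal{I}$, using Proposition~\ref{propMinInterval} and the two parts of Lemma~\ref{lemmInegalitesgeq}.

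\textbf{Step 1: the implication (\ref{implicationthmNTZ}).} Suppose $I_{\geq}(x_{j},i)$ is valid. By Proposition~\ref{propineqValid} it is equivalent to $x_{j} \geq f^{\geq}_{ij}(x,h)$. Every $(x,h)$-linear function is non-decreasing, so the cost function satisfies $c^{\intercal}\otimes x \oplus c_{h}\otimes h \geq f_{z,ij}(x,h)$. This is exactly the computation in Theorem~\ref{theoMinCi}. Combined with the first constraint of (\ref{mprContraintes}), it gives $z \geq f_{z,ij}(x,h)$.

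\textbf{Step 2: the image intervals.} Fix $\T$ as in (\ref{eqdefTgeq}). By (\ref{eqfRdomaine})--(\ref{eqDomainhB}), each function $f_{z,ij}$ with $(i,j)\in\mathcal{T}(\T,\I^{\geq})$ maps $R^{\T}_{\lambda\mu}(x,h)$ onto an interval of $\mathcal{I}$. That interval is $[\beta\otimes\mu,+\infty]$ when $\T=\B$ and $[\oplus_{l}\alpha_{l}\otimes\lambda,+\infty]$ when $\T=\U$. Because these images all lie in $\mathcal{I}$, the \textsf{Min} and \textsf{Max} in (\ref{eqthetageq})--(\ref{eqthetaIgeq}) are well defined by (\ref{eq-intersection=Min-interval})--(\ref{eq-union=Max-interval}). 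Restricting to a single class $h\T$ is justified by the hierarchy (\ref{eqOrdrefct}): by (\ref{eqDominclus}), $h\B$ images are strictly smaller than $h\U$ images.

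\textbf{Step 3: claims (2) and (1).} For claim (2), fix a row $i$. The statement $\mathcal{T}^{z}_{log}(\T,\I^{\geq})(i)$ is a disjunction of inequalities $z\geq f_{z,ij}$. Applying part (2) of Lemma~\ref{lemmInegalitesgeq} and inducting on the number of disjuncts, the maximal admissible $\sigma$ is the \textsf{Max} of the images, namely $\theta_{i}$; Proposition~\ref{propMinInterval} gives the chain needed for the induction. For claim (1), a fixed $z$ must satisfy every row-disjunction simultaneously. Applying part (1) of Lemma~\ref{lemmInegalitesgeq} and inducting over rows, the maximal admissible interval is the \textsf{Min} of the $\theta_{i}$, namely $\theta$, by (\ref{eq-intersection=Min-interval}).

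\textbf{Step 4: the tie case.} If $\textsf{argMin}(\theta)$ is a singleton, then $S^{\geq}_{\theta}$ is that singleton by definition. Otherwise, repeat Steps 2--3 with $f^{\geq}_{ij}$ in place of $f_{z,ij}$ and with $x_{j}$ in place of $z$. This yields (1') and (2') for $\theta'$ and $\theta'_{i}$, and $S^{\geq}_{\theta'}$ is again defined as the argmin.

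\textbf{Main obstacle.} The hard step is the induction in claim (1). Part (1) of Lemma~\ref{lemmInegalitesgeq} is stated for two inequalities in the same variable $y$ and a common $(x,h)$. Here the disjunctions for different rows may be witnessed by different $(x,h)$, since the inner existential quantifier can be chosen per row. I would first try to show that a common witness exists. The idea is that each $\theta_{i}$ is the image of the whole box $R^{\T}_{\lambda\mu}$, and that taking $(x,h)$ at the lower corner $(\lambda,\mu)$ of the box realizes all lower endpoints at once. Using (\ref{eqmudominelambda}) in the $h\B$ case, this should let one $(x,h)$ serve every row, so that the intersection formula applies.
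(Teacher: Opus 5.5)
Your proposal follows the paper's proof: claim (2) comes from iterating part (2) of Lemma~\ref{lemmInegalitesgeq}, so that $\theta_i$ is the union of the images; claim (1) comes from intersecting the $\theta_i$ via (\ref{eq-intersection=Min-interval}); and (1')--(2') come from rerunning the same argument with all tied $x_j$ replaced by one generic variable $y$, which is how the paper makes your ``$x_j$ in place of $z$'' precise. The paper passes over the common-witness issue you flag, and your resolution is correct: every $f_{z,ij}$ is non-decreasing and the box $R_{\lambda\mu}$ has least element $(\lambda,\ldots,\lambda,\mu)$, which attains every lower endpoint at once (no appeal to (\ref{eqmudominelambda}) is needed for this), so that single point witnesses all rows simultaneously.
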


\proof First, let us prove results (1) and (2) of the Theorem for the cost variable $z$. The result ((2), this Theorem) for the intervals
$\theta_{i}$ (see (\ref{eqthetaIgeq})) is just a generalization of result
((2), Lemma~\ref{lemmInegalitesgeq}) based on the
associativity of $\textsf{Min}$ of result ((2), Lemma~\ref{lemmInegalitesgeq}) to the case of
$\textsf{n}(\{j: (i,j) \in \mathcal{T}(\T,\I^{\geq})\})$ disjunctions of the form $(z \geq f_{z,ij})$.
Each interval $f_{z,ij}(R^{\T}_{\lambda \mu}(x,h))$ is an element of $\mathcal{I}$ thus $\theta_{i}$ is
the union of several intervals $f_{z,ij}(R^{\T}_{\lambda \mu}(x,h))$ and is also an element of $\mathcal{I}$
(see again the particular case of two disjunctions treated in ((2), Lemma~\ref{lemmInegalitesgeq})).
Thus, the following assertion:

\[
\forall z \in \theta_{i} \; \exists (x,h) \in R^{\T}_{\lambda \mu}(x,h): \;
     \textsf{OR}_{\{j:(i,j) \in \mathcal{T}(\I, \I^{\geq})\}} (z \geq f_{z,ij} (x,h)) 
\]
is true for $\theta_{i}$ defined by (\ref{eqthetaIgeq}) which is maximal in the sense of the interval inclusion. \\

The conjunction $\textsf{AND}_{\{i:(i,j) \in \mathcal{T}(\T, \I^{\geq})\}} [\cdot]$ is true iff
$\forall i \in \{i:(i,j) \in \mathcal{T}(\T, \I^{\geq})\}$, $z \in \theta_{i}$. In other words,
the conjunction is true iff $z$ is an element of the interval $\cap_{\{i:(i,j) \in \mathcal{T}(\T, \I^{\geq})\}} \theta_{i}$.
And because the $\theta_{i}$'s are element of $\mathcal{I}$ we have
(see equality (\ref{eq-intersection=Min-interval})):

\[
\bar{lll}
\cap_{\{i:(i,j) \in \mathcal{T}(\T, \I^{\geq})\}} \theta_{i} & = & \textsf{Min}_{\{i:(i,j) \in \mathcal{T}(\T, \I^{\geq})\}} \theta_{i} \\
\mbox{} & = & \theta.
\ear
\]
And $\theta$ is maximal. \\
The results (1) and (2) of this Theorem for the cost variable $z$ are now proved. \\

\noi
Second, let us prove the results (1') and (2') of this Theorem for $\theta'$ when $\textsf{n}(\textsf{argMin}(\theta)) \geq 2$. \\
In this case $\forall i \in \{i: (i,j) \in \mathcal{T}'(\T',\textsf{argMin}(\theta))\}$
all the variables $x_{j}$, $j \in \{j: (i,j) \in \mathcal{T}'(\T',\textsf{argMin}(\theta))\}$
can be exchanged. It means that any inequality $(x_{j} \geq f^{\geq}_{ij})$ can be replaced by
a generic inequality $(y \geq f^{\geq}_{ij})$ where $y$ is a generic variable which stands
for any $x_{j}$, $j \in \{j: (i,j) \in \mathcal{T}'(\T',\textsf{argMin}(\theta))\}$. Therefore,
the proof of the expression of the intervals $\theta'_{i}$ (see (\ref{eqthetaIprimegeq})) use
the same arguments as the ones use for the $\theta_{i}$'s. And the proof of the expression of
the interval $\theta'$ (see (\ref{eqthetaprimegeq})) use also the same arguments. The proofs only differ
by their reference set where the refernce set $\mathcal{T}(\T,\I^{\geq})$ (for $z$) is replaced by the
the refernce set $\mathcal{T}'(\T',\textsf{argMin}(\theta))$ for the $x_{j}$'s. The proof is
not developed here.

\cqfd. \\

\subsubsection{The substitution procedure at step $k$ itself}
\label{subsubSubstit-cases}

Assuming that the substitution process is not finished we are now in position to indicate
how to choose a remaining variable to be substituted. The choice also depends on the
following different cases which are listed hereafter.

Let us define the following noticeable sets.

\begin{subequations}
  The set of dominating variables:
\begin{equation}
  \label{eqDomvar}
D:=\{x_{j} \in \x: \; I_{\leq}(x_{j}) \neq \emptyset \mbox{ and }
I_{\geq}(x_{j}) \neq \emptyset\}.
\end{equation}

Then the set of the valid inequalities is defined as follows: 

\begin{equation}
  \label{eqdefID}
  \I^{\geq}:=\left\{\bar{ll} I_{\geq}(\x \cap D) & \mbox{ if $D \neq \emptyset$} \\
  I_{\geq}(\x), \mbox{ cf. (\ref{eqtteslesgeq})}           & \mbox{ otherwise.}
  \ear \right.
  \end{equation}
Where: $I_{\geq}(\x \cap D):= \{(i,j): \mbox{ $x_{j} \in \x \cap D$ and $I_{\geq}(x_{j},i)$ is valid} \}$.
We adopt the convention that $I_{\geq}(\emptyset)=\emptyset$.

\end{subequations}

\noi
\textsc{Case $0$}: $min(z)=+\infty$ or no minimum. \textsc{Case $0.1$}: $\x = \emptyset$ and the condition (\ref{eqchok}) of
Theorem~\ref{thmchattaignable} does not holds. $\x=\emptyset$ means that
$k=n$ (no more variable to be substituted) and $\textsf{cost}^{[n]}(x,h)= c_{h}^{[n]} \otimes h$. The condition (\ref{eqchok}) of
Theorem~\ref{thmchattaignable} does not holds means $A_{[|1,m|] h}^{+[n]} \ngeq
A_{[|1,m|] h}^{-[n]}$ then $h=+\infty$ is the only possible value for $h$ and by backward substitution we have
$z=+\infty$, $x=\boldsymbol{+\infty}$. \\

\noi 
\textsc{Case $0.2$}:
$\x \neq \emptyset$ and $I_{\leq}(\x)=\emptyset$ and $I_{\geq}(\x)=\emptyset$ and the condition (\ref{eqchok}) of
Theorem~\ref{thmchattaignable} does not holds. The substitution process stops because no
valid inequalities can be generated. No minimum reached. \\

\noi
\textsc{Case $1$}: {\bf switching case}, cf. \S~\ref{subswitch}. $\x^{+}=\emptyset$ and $\textsf{cost}^{[k]}(x,h)= c_{h}^{[k]} \otimes h$ and the
condition (\ref{eqchok}) of Theorem~\ref{thmchattaignable} does not holds and $\x^{\dzero} \neq \emptyset$ and
$I_{\geq}(\x^{\dzero}) = \emptyset$ and $I_{\leq}(\x^{\dzero}) \neq \emptyset$. The substitution must switch to
the following maximizing problem:
\begin{subequations}
  \begin{equation}
\textsf{PMR}(A,b,C,d,c,c_{h},z,x,h,(m,n)),
    \end{equation}
  where the matrices $A$ and $C$ are defined by:
 \begin{equation}
    A:=A^{-[k]}_{[|1,m|][|1,n|]}, \; C:=A^{+[k]}_{[|1,m|][|1,n|]}. 
 \end{equation}
The vectors $b$ and $d$ are defined by:
 \begin{equation}
   b:= A^{-[k]}_{[|1,m|]h}, d:= A^{+[k]}_{[|1,m|]h}.
       \end{equation}
 The initial cost function of the \textsf{MPR} is characterized by the vector
 $c$ and the constant $c_{h}$ such that:
 \begin{equation}
    c:=\boldsymbol{-\infty}, \; c_{h}:=c_{h}^{[k]}. 
    \end{equation}
 And the variables of the problem are:
 \begin{equation}
    z:=z , \; x \leftarrow x^{\dzero}, \; h:=h.
 \end{equation}
 Where the notation $x \leftarrow x^{\dzero}$ means that we store in the $n$-dimensional vector
 $x$ the remaining variables of the set $\x^{\dzero}$. The other components (already substituted
 variables) are set to $\dzero$. \\
 
 And the dimensions $(m,n)$ of the \textsf{MPR} are obviously the same
 as in the \textsf{mpr}.
  \end{subequations}

\noi
\textsc{Case $2$}: Not \textsc{Case $0$} and not \textsc{Case $1$}. The first case is \textsc{Case $2.1$}: $\x = \emptyset$ and the condition (\ref{eqchok}) of
Theorem~\ref{thmchattaignable} holds.  This means that
$k=n$ (no more variable to be substituted) and $\textsf{cost}^{[n]}(x,h)= c_{h}^{[n]} \otimes h$ and $A_{[|1,m|] h}^{+[n]} \geq
A_{[|1,m|] h}^{-[n]}$ then $h$ can take all possible values in particular we can take $h=\dun$ and the $\textsf{mpr}$ succeeds
with finite minimum $z=c_{h}^{[n]}$. \\

The other case is \textsc{Case $2.2$}: $\x \neq \emptyset$ and $\exists x_{j} \in \x$ such that: $I_{\geq}(x_{j}) \neq \emptyset$.
Let us distinguish the following subcases which are based on the form
of the cost function:

\begin{subequations}
\begin{equation}
  \textsf{cost}^{[k]}(x,h)= c^{\intercal}_{\x^{+}} \otimes x^{+} \oplus c^{\intercal}_{\x^{\dzero}} \otimes x^{\dzero} \oplus c_{h} \otimes h,
  \end{equation}
where:
\begin{equation}
  \label{eqpartitiondex}
  \mbox{$\x^{+}:=\{x_{j} \in \x: \; c_{j} > \dzero\}$ and $\x^{\dzero}:=\{x_{j} \in \x: \; c_{j} = \dzero\}$}.
  \end{equation}
And the vectors $x^{+}$ and $x^{\dzero}$ are defined by
\begin{equation}
  \label{eqpartitiondex2}
x^{+}:=(x_{j})_{\{j: x_{j} \in \x^{+}\}} \mbox{ and } x^{\dzero}:=(x_{j})_{\{j: x_{j} \in \x^{\dzero}\}}.
 \end{equation}
\end{subequations}

\noi
\textsc{Case $2.2.1$}: the condition (\ref{eqchok}) of Theorem~\ref{thmchattaignable} holds. Then, $z=c_{h} \otimes h$ is
the minimum for the $\textsf{mpr}$. And set $x$ to $\boldsymbol{-\infty}$. \\

\noi
\textsc{Case $2.2.2$}: otherwise, the condition (\ref{eqchok}) of Theorem~\ref{thmchattaignable} does not hold. \\

\noi
We apply Theorem~\ref{theoNTZmin} with the set of valid inequalities $\I^{\geq}$ defined by (\ref{eqdefID}). \\

\noi
We get a $2$-tuple $(i^{*},j^{*})$ for the substitution
of $x_{j^{*}}=f^{\geq}_{i^{*}j^{*}}(x,h)$. \\

\noi
We apply results of \S~\ref{subsecNewcost+Newconepmrp} to compute the new set
of equalities $\mathcal{L}^{[k+1]}$, the new cost function $\text{cost}^{[k+1]}(x,h)$ and the new cone
 $\mathcal{C}(A^{+},A^{-})^{[k+1]}$.

\subsubsection{The new characteristic elements of the \textsf{mpr} after substitution computed in
$\mathcal{O}(mn^{2})$}
  \label{subsecNewcost+Newconepmrp}

  The new set of equalities is defined as follows. If conditions of Theorem~\ref{thmchattaignable} are verified then
  set $k+1=n$ and $\mathcal{L}^{[n]}:= \mathcal{L}^{[k]} \cup_{x_{j} \in \x}\{x_{j}= \dzero\} \cup \{z=c_{h} \otimes h\}$. Then,
  $\textsf{pmrp}$
    {\bf stops} and {\bf we solve} the
    linear system $\mathcal{L}^{[n]}$ and express all the $x_{j}$'s as function of the homogenization variable $h$. Otherwise,
    we define $\mathcal{L}^{[k+1]}$ as: $\mathcal{L}^{[k+1]}:= \mathcal{L}^{[k]} \cup \{x_{j^{*}}=f_{i^{*}j^{*}}(x,h)\}$ and we
    apply Theorem~\ref{theoMinCi} of \S~\ref{coneMinCi}. And based on the previous cases the new cost
  function is defined as:

  \begin{equation}
  \textsf{cost}^{[k+1]}(x,h):= f_{z,i^{*}j^{*}}(x,h).
  \end{equation}
  
And in all cases the new
cone $\mathcal{C}(A^{+}, A^{-})^{[k+1]}$ associated with $\textsf{pmrp}$ is deduced from the
following set of inequalities:

  \begin{subequations}
    \begin{equation}
      \bar{l}
      z \geq \textsf{cost}^{[k+1]}(x,h), \\
      \forall i \in [|1,m|]: \; a^{+[k]}_{i,.} \otimes \ell \geq  a^{-[k]}_{i,.} \otimes \ell .
      \ear
 \end{equation}
    and $\ell$ is the $n+2$-dimensional column vector which has the same components
    as the vector $w^{[k]}$ except its $j^{*}$th component wich is:
    \begin{equation}
\ell_{j^{*}}:= f_{i^{*}j^{*}}(x,h).
      \end{equation}
  \end{subequations}

  Let us define the following $(n+2) \times (n+2)$ {\em transition matrix} $T^{k \rightarrow k+1}$ by:

  \begin{equation}
    \forall j: t^{k \rightarrow k+1}_{j,.}:= \left\{ \bar{lc} e^{\intercal}_{j}  \mbox{ if $j \neq j^{*}$} \\
                                                (\dzero, v^{\intercal}_{i^{*}j^{*}}, r_{i^{*}j^{*}}) & \mbox{ if $j=j^{*}$.}
    \ear \right.
  \end{equation}
  Where $e^{\intercal}_{j}$ denotes the $j$th $n+2$-dimensional row vector of the
  $(n+2) \times (n+2)$-identity matrix $\bI_{n+2}$.

  The new matrices $(A^{+}, A^{-})^{[k+1]}$ are defined by:
  \begin{equation}
(A^{+}, A^{-})^{[k+1]}:= \textsf{setrowtozero}(A^{+[k]} \otimes T^{k \rightarrow k+1}, A^{-[k]} \otimes T^{k \rightarrow k+1})
    \end{equation}
recalling that function \textsf{setrowtozero} is defined by (\ref{ligneazero}).

And the new vector $w^{[k+1]}$ is defined by:
\begin{equation}
w^{[k+1]}:= \left(\bar{c}
     z \\
     \vdots \\
     \dzero \\
     \vdots \\
     h
     \ear \right) \; \bar{c}
     \mbox{} \\
     \mbox{} \\
     \leftarrow \; j^{*} \\
     \mbox{} \\
     \mbox{}
     \ear 
  \end{equation}
And the new set $\x$ of the remaining variables is defined by:
\begin{equation}
\x := \x \setminus \{x_{j^{*}}\}.
  \end{equation}

\section{Linear fractional programming}
\label{seclinfraction}

In this section we consider the following $(\max,+)$-analogue of the
linear fractional programming (see \cite{kn:ChCoop1962}) defined as follows.

\begin{subequations}
\begin{equation}
  \label{eqdefminfracprog}
\min\{z= p^{\intercal} \otimes x / r^{\intercal} \otimes x, x \in \mathcal{P}(A,b,C,d) \cup \{\boldsymbol{\pm \infty}\} \}.
  \end{equation}
Where $p$, $r$ are two $n$-dimensional vectors satisfying the following
assumption:

\begin{equation}
\label{eqAssumpqrs}
\mbox{{\bf Assumption $\boldsymbol{1^{f}}$}}:\; \mbox{$p \neq \boldsymbol{-\infty}$ and $r \neq \boldsymbol{-\infty}$}.
  \end{equation}
  
\end{subequations}

This kind of problem has been studied in eg. \cite{kn:GKS012} and \cite{kn:GMH014} where pseudo-polynomial
algorithms have been provided.

Hereafter, we follow the $(\max,+)$-analogue of the Charnes-Cooper transformation \cite{kn:ChCoop1962}
which has been used in \cite{kn:GMH014}. Using also the Fourier's trick the problem (\ref{eqdefminfracprog}) is unfolded
as follows:

\begin{equation}
  \bar{l}
  \min(z) \\
z \geq p^{\intercal} \otimes y \; : \mbox{Fourier's trick} \\
A \otimes y \oplus b \otimes t \geq C \otimes y \oplus d \otimes t \; :\mbox{$(y,t)$-cone part}\\
r^{\intercal} \otimes y \geq \dun \; : \mbox{$(y)$-polyhedron part} \\
t \geq \dzero  \; : \mbox{$t$ added variable such that $y = t \otimes x$}.
\ear
\end{equation}

\begin{rem}
  In \cite{kn:GMH014} the authors follow exactly the Charnes-Cooper transformation. This means that
  the $(y)$-polyhedron part is not an inequality but the equality: $r^{\intercal} \otimes y = \dun$.
  This latter equality is equivalent to: (1). $r^{\intercal} \otimes y \geq \dun$ and
  (2). $\dun \geq r^{\intercal} \otimes y $. But substitution allows us to drop the inequality (2) because when an
  inequality is chosen using our criteria developed in \S~\ref{subsubChoixvar} then we use the result
  of Theorem~\ref{theoMinCi} of \S~\ref{coneMinCi} to saturate this inequality, ie. $\geq$ becomes $=$.
  \end{rem}

Now, we just have to homogenize the $(y)$-polyhedron part by adding the homogeneous variable $h$ and we obtain the following
\textsf{mpr} problem: 

\begin{subequations}
  \begin{equation}
      \label{eqprobFractàresoudre}
  \bar{l}
  \min(z) \\
  z \geq p^{\intercal} \otimes y  \; : \mbox{Fourier's trick}.
  \ear
  \end{equation}
The research of a minimum is developed on the following cone:
\begin{equation}
  \label{eqconefractprog}
    \bar{l}
A \otimes y \oplus b \otimes t \geq C \otimes y \oplus d \otimes t \; :\mbox{$(y,t)$-cone part}\\
r^{\intercal} \otimes y  \geq h \; : \mbox{$(y)$-polyhedron part homogenized}.
\ear
  \end{equation}
The variable $t$ used in the Charnes-Cooper transformation is such that:
\begin{equation}
  \label{eqVartety}
t \geq \dzero, \; y = t \otimes x.
\end{equation}
And the homogenization variable $h$ satifies:
  \begin{equation}
      \label{eqprobFractavec-h}
h > \dzero.
  \end{equation}
And
\begin{equation}
  \label{zhnonsubstifract}
\mbox{$z$ and $h$ are not substituable}.
\end{equation}
Finally, among the variables of the problem $z, y_{1}, \ldots, y_{n},t$ we have:
\begin{equation}
  \label{zpriofract}
  \mbox{$z$ has the highest priority because bounded by the cost function}.
  \end{equation}

\end{subequations}

\section{Numerical examples}
\label{secExNum}
In ths section we study pedagogical examples. In \S~\ref{subcoherencyexample} we illustrate the coherency of our strongly polynomial
method on a toy example. 

In \S~\ref{subExminimization-de-Gaubertetal}  (problem of minimization) and in \S~\ref{subExfractional-de-Gaubertetal} (problem
of fractional minimization) we illustrate our strongly polynomial method on some numerical
examples borrowed from the litterature which are solved using pseudo-polynomial method in \cite{kn:GKS012}.

\subsection{A minimum which is equal to $+\infty$}
\label{subcoherencyexample}
In this subsection we consider the following minimization problem:
the homogenization variable $h$:
\[
\min\{z=x_{1}, x=(x_{1}) \in \mathcal{P}(A,b,C,d) \cup \{\boldsymbol{\pm \infty}\}\},
\]
where the polyhedron $\mathcal{P}(A,b,C,d)$ is characterized by the following vectors and matrices:
\[
A:=\left(\bar{c} \dun \\ 2^{\otimes (-1)} \ear \right), \; b:=\left(\bar{c} \dun \\ 1 \ear \right), \;
C:=\left(\bar{c} 1^{\otimes (-1)} \\ \dun \ear \right), \; d:=\left(\bar{c} 2 \\ \dzero \ear \right).
\]
Because the example is very small we do not need to describe the cone associated with \textsf{mpr}. Using
the homogenization variable $h$ the \textsf{mpr} is unfolded as follows:

\begin{equation}
  \label{pmrCoherencyEx0}
  \bar{l}
  \min(z) \\
 l_{0}: \; z \geq x_{1} \\
 l_{1}: \;  x_{1} \oplus h \geq 1^{\otimes (-1)} \otimes x_{1} \oplus 2 \otimes h  \\
 l_{2}: \; 2^{\otimes (-1)} \otimes x_{1} \oplus 1 \otimes h \geq x_{1}.
  \ear
\end{equation}
Only $l_{1}$ generates the valid inequality $I_{\geq}(x_{1},1)=\{x_{1} \geq 2 \otimes h\}$. Thus,
the substitution of $x_{1}$ by $2 \otimes h$ (ie. we have
$\mathcal{L}^{[1]}= \{x_{1} = 2 \otimes h\}$) provides the new system:
\begin{equation}
  \label{pmrCoherencyEx1}
  \bar{l}
  \min(z) \\
 l_{0}: \; z \geq 2 \otimes h \\
 l_{1}: \;  2 \otimes h \geq  2 \otimes h  \\
 l_{2}: \; 1 \otimes h \geq 2 \otimes h 
  \ear
\end{equation}
Constraint $l_{2}$ implies that $h=+\infty$ is the only possible value for $h$, where $h$ is assumed
to be $> \dzero$. Then, by backward substitution
we have: $z=2 \otimes +\infty=+\infty$ and
$x_{1}= 2 \otimes +\infty=+\infty$. And $+\infty$ is element of the set $\mathcal{P}(A,b,C,d) \cup \{\boldsymbol{\pm \infty}\}$. 

This illustrates the coherency of the method because $\min(z)=+\infty$ is actually reached at $x=(+\infty)$ which is
a point of the polyhedron $\cup \{\boldsymbol{\pm \infty}\}$. 

\subsection{Minimization Example}
\label{subExminimization-de-Gaubertetal}
In this subsection we consider the Example 2 p. 1463 of \cite{kn:GKS012} which is:

\[
\min\{z= 2 \otimes x_{1} \oplus 4^{\otimes (-1)} \otimes x_{2}, x=(x_{1},x_{2})^{\intercal} \in \mathcal{P}(A,b,C,d) \cup \{\boldsymbol{\pm \infty}\}\},
\]
Where:
\[
A:=\left(\bar{cc}
2^{\otimes(-1)} & \dun \\
  \dun & 1^{\otimes (-1)} \\
  1 & 2^{\otimes (-1)} \\
  2 & \dzero \\
  \dun & \dzero \\
  2^{\otimes (-1)} & \dzero \\
  4^{\otimes (-1)} & \dzero
  \ear \right), \;b:=\left(\bar{c}
  \dzero \\
  \dzero \\
  \dzero \\
  \dzero \\
  \dun \\
  \dun \\
  \dun   
  \ear \right), \;
  C:=\left(\bar{cc}
  \dzero & \dzero \\
  \dzero & \dzero \\
  \dzero & \dzero \\
  \dzero & 3^{\otimes (-1)} \\
  \dzero & 4^{\otimes (-1)} \\
  \dzero & 5^{\otimes (-1)} \\
  \dzero & 6^{\otimes (-1)}
  \ear \right), \; d:=\left(\bar{c}
  \dun \\
  \dun \\
  \dun \\
  \dun \\
  \dzero \\
  \dzero \\
    \dzero
  \ear \right).
\]
with the following change of notations $A \leftrightarrow B$, $b \leftrightarrow d$, $C \leftrightarrow A$ and
$d \leftrightarrow c$.

The $\textsf{mpr}$ is defined on the cone $\mathcal{C}(A,b,C,d)$ obtained from the
polyhedron $\mathcal{P}(A,b,C,d)$ by adding the homogenization variable $h$ as already explained
previuously.

At step $0$ of the substitution the cone associated with the above constraints of the $\textsf{mpr}$ problem
is denoted $\mathcal{C}(A^{\; +}, A^{ \; -})^{[0]}$ and is defined by the following system of inequalities:
\begin{subequations}
\begin{equation}
  \label{thelinsysofineq0}
A^{+[0]} \otimes w^{[0]} \geq  A^{-[0]} \otimes w^{[0]},
\end{equation}
with $w^{[0]}= (z , x_{1} , x_{2}  , h)^{\intercal} \in \overline{\real}_{\dzero}^{4}$ and the matrices $(A^{\; +}, A^{ \; -})^{[0]}$
are defined by:

\begin{equation}
A^{+[0]}:= \left(\bar{cccc}
\dun & \dzero & \dzero & \dzero \\
\dzero & 2^{\otimes(-1)} & \dun & \dzero \\
\dzero &  \dun & 1^{\otimes (-1)} & \dzero \\
\dzero &   1 & 2^{\otimes (-1)} & \dzero \\
\dzero &  2 & \dzero & \dzero \\
\dzero & \dun & \dzero & \dun \\
\dzero &  2^{\otimes (-1)} & \dzero & \dun  \\
\dzero &  4^{\otimes (-1)} & \dzero  & \dun
\ear \right), \;
A^{-[0]}:= \left(\bar{cccc}
\dzero & 2 & 4^{\otimes (-1)} & \dzero \\
\dzero & \dzero & \dzero &  \dun \\
\dzero & \dzero & \dzero  &  \dun \\
\dzero &  \dzero & \dzero  &  \dun \\
\dzero & \dzero & 3^{\otimes (-1)}  &  \dun \\
\dzero &   \dzero & 4^{\otimes (-1)} & \dzero \\
\dzero &  \dzero & 5^{\otimes (-1)} & \dzero \\
\dzero &  \dzero & 6^{\otimes (-1)} & \dzero
\ear \right).
\end{equation}
\end{subequations}

And the Fourier's trick for this example is:
\begin{subequations}
\begin{equation}
\min(z)
  \end{equation}

\begin{equation}
z \geq \textsf{cost}^{[0]}(x,h)
  \end{equation}

\begin{equation}
\textsf{cost}^{[0]}(x,h)= 2 \otimes x_{1} \oplus 4^{\otimes (-1)} \otimes x_{2} \oplus \dzero \otimes h.
  \end{equation}

\end{subequations}

$\x=\{x_{1}, x_{2}\} \neq \emptyset$ and the $\textsf{mpr}$ problem is bounded because one can check
that: $\forall x_{j} \in \x$: $I_{\geq}(x_{j}) \cup I_{\leq}(x_{j}) \neq \emptyset$. We have
$\x^{+}=\{x_{1}, x_{2}\}$ and $\x^{\dzero}=\emptyset$. The set of dominating variables is
$D=\{x_{2}\}$ (see (\ref{eqDomvar})). It is easy to see here that $I_{\leq}(x_{1})=\emptyset$ because
$A_{[|1,m|] x_{1}}^{-[0]} = \boldsymbol{-\infty}$. Clearly, the condition (\ref{eqchok}) of Theorem~\ref{thmchattaignable} does
not hold. \\

\noi
\textsc{Case $2.2.2$}, \S~\ref{subsubSubstit-cases} applies. We have (see \ref{eqdefID}):
$\I^{\geq}=I_{\geq}(x_{2})=\{(1,2), (2,2), (3,2)\}$. \\
We apply Theorem~\ref{theoNTZmin} with $\I^{\geq}=\{(1,2), (2,2), (3,2)\}$. \\

From the following array of possible inequalities provided using (\ref{a+a-ineq})-(\ref{ORa+a-ineq}):

\[
\bar{lllllll}
 \left( \bar{c} z \\ x_{2}
\ear \right)_{1} & \geq & \left( \bar{c}  2 \otimes x_{1} \oplus 4^{\otimes (-1)} \otimes h\\
h \ear \right) & :: & \left(\bar{c}
\mbox{$[4^{\otimes (-1)} \otimes \mu, +\infty]$} \\ \mbox{$[ \dun \otimes \mu,+\infty ]$} \ear \right) & :: & \left(\bar{c}
h\B \\  h\B \ear \right) \\
 \left( \bar{c} z \\ x_{2}
\ear \right)_{2} & \geq & \left( \bar{c}  2 \otimes x_{1} \oplus 3^{\otimes (-1)} \otimes h\\
1 \otimes h \ear \right) & :: & \left(\bar{c}
\mbox{$[3^{\otimes (-1)} \otimes \mu, +\infty]$} \\ \mbox{$[ 1 \otimes \mu,+\infty ]$} \ear \right) & :: & \left(\bar{c}
h\B \\  h\B \ear \right) \\
 \left( \bar{c} z \\ x_{2}
\ear \right)_{3} & \geq & \left( \bar{c}  2 \otimes x_{1} \oplus 2^{\otimes (-1)} \otimes h\\
2 \otimes h \ear \right) & :: & \left(\bar{c}
\mbox{$[2^{\otimes (-1)} \otimes \mu, +\infty]$} \\ \mbox{$[ 2 \otimes \mu,+\infty ]$} \ear \right) & :: & \left(\bar{c}
h\B \\  h\B \ear \right).
\ear
\]

one has $\T=\B$ (see (\ref{eqdefTgeq})), $ \mathcal{T}(\B,\I^{\geq})=\{(1,2), (2,2), (3,2)\}$ (see (\ref{mathcalTgeq})),
$\textsf{argMin}(\theta)=\{(3,2)\}$ with $\theta=[2^{\otimes (-1)} \otimes \mu, +\infty]$ defined by (\ref{eqthetageq}),
$S_{\theta}^{\geq}=\{x_{2}\}$ (see (\ref{Sthetageq})). $\textsf{argMin}(\theta)$ is a singleton thus: \\
$(i^{*},j^{*})=(3,2)$ and $x_{2}=2 \otimes h$. \\

We apply the update procedure of \S~\ref{subsecNewcost+Newconepmrp} and the new elements of the problem are as
follows. \\

The set of linear equalities is now:

\begin{equation}
\mathcal{L}^{[1]}= \{x_{2} = 2 \otimes h\}.
\end{equation}

The new cost function is:
\begin{equation}
  \textsf{cost}^{[1]}(x,h):= f_{z,32}(x,h)=2 \otimes x_{1} \oplus 2^{\otimes (-1)} \otimes h.
\end{equation}

The $4 \times 4$-transition matrix $T^{0 \rightarrow 1}$ is defined by:

\begin{equation}
  T^{0 \rightarrow 1}:= \left(\bar{cccc}
  \dun & \dzero & \dzero & \dzero \\
    \dzero & \dun & \dzero & \dzero \\  
  \dzero & \dzero & \dzero & 2 \\
  \dzero & \dzero & \dzero & \dun 
  \ear \right). 
  \end{equation}

We calculate the following matrices $A^{+}:=A^{+[0]} \otimes T^{0 \rightarrow 1}$ and $A^{-}:=A^{-[0]} \otimes T^{0 \rightarrow 1}$ and we obtain:

\[
A^{+}:= \left(\bar{cccc}
\dun & \dzero & \dzero & \dzero \\
\dzero & 2^{\otimes(-1)} & \dzero & 2 \\
\dzero &  \dun & \dzero & 1 \\
\dzero &   1 & \dzero & \dun \\
\dzero &  2 & \dzero & \dzero \\
\dzero & \dun & \dzero & \dun \\
\dzero &  2^{\otimes (-1)} & \dzero & \dun  \\
\dzero &  4^{\otimes (-1)} & \dzero  & \dun
\ear \right), \;
A^{-}:= \left(\bar{cccc}
\dzero & 2 & \dzero & 2^{\otimes (-1)} \\
\dzero & \dzero & \dzero &  \dun \\
\dzero & \dzero & \dzero  &  \dun \\
\dzero &  \dzero & \dzero  &  \dun \\
\dzero & \dzero & \dzero  &  \dun \\
\dzero &   \dzero & \dzero & 2^{\otimes (-1)} \\
\dzero &  \dzero & \dzero & 3^{\otimes (-1)} \\
\dzero &  \dzero & \dzero & 4^{\otimes (-1)}
\ear \right).
\]

We remark that $\forall i \neq 0, 4$ we have $a^{+}_{i,.} \geq a^{-}_{i,.}$ thus the
new cone $\mathcal{C}(A^{\; +}, A^{ \; -})^{[1]}$ is defined as follows:

\begin{subequations}
\begin{equation}
  \label{thelinsysofineq1}
A^{+[1]} \otimes w^{[1]} \geq  A^{-[1]} \otimes w^{[1]},
\end{equation}
with $w^{[1]}= (z, x_{1}, \dzero , h)^{\intercal} \in \overline{\real}_{\dzero}^{4}$ and the matrices $(A^{\; +}, A^{ \; -})^{[1]}$
are defined as the result of $\textsf{setrowtozero}(A^{+}, A^{-})$:

\begin{equation}
  \label{eqAAA}
A^{+[1]}:= \left(\bar{cccc}
\dun & \dzero & \dzero & \dzero \\
\dzero & \dzero & \dzero & \dzero \\
\dzero &  \dzero &   \dzero & \dzero\\
\dzero &  \dzero  & \dzero & \dzero\\
\dzero &  2 & \dzero & \dzero \\
\dzero &  \dzero  & \dzero & \dzero\\
\dzero &  \dzero  & \dzero & \dzero\\
\dzero &  \dzero  & \dzero & \dzero
\ear \right), \;
A^{-[1]}:= \left(\bar{cccc}
\dzero & 2 & \dzero & 2^{\otimes (-1)} \\
\dzero & \dzero & \dzero &  \dzero \\
\dzero & \dzero & \dzero &  \dzero \\
\dzero & \dzero & \dzero &  \dzero \\
\dzero & \dzero & \dzero &  \dun \\
\dzero & \dzero & \dzero &  \dzero \\
\dzero & \dzero & \dzero &  \dzero \\
\dzero & \dzero & \dzero &  \dzero
\ear \right).
\end{equation}
\end{subequations}

Here $\x=\{x_{1}\} \neq \emptyset$ and the $\textsf{mpr}$ problem is bounded because one can check eg.
that: $I_{\geq}(x_{1}) \neq \emptyset$. We have
$\x^{+}=\{x_{1}\}$ and of course $\x^{\dzero}=\emptyset$. The set of dominating variables is
$D=\emptyset$ (see (\ref{eqDomvar})). The condition (\ref{eqchok}) of Theorem~\ref{thmchattaignable} does not hold. \\

\noi
\textsc{Case $2.2.2$}, \S~\ref{subsubSubstit-cases} applies. We have (see \ref{eqdefID}):
$\I^{\geq}=I_{\geq}(x_{2})=\{(4,1)\}$. \\

Thus, obviously $(i^{*},j^{*})=(4,1)$ which means that $x_{1} = 2^{\otimes (-1)} \otimes h$,
in the row $4$ of the linear system of inequalities (\ref{eqAAA}). \\

We apply the update procedure of \S~\ref{subsecNewcost+Newconepmrp} and the new elements of
the problem are the following ones.

The set of linear equalities is now:

\begin{equation}
\mathcal{L}^{[2]}= \{x_{2} = 2 \otimes h, x_{1} = 2^{\otimes (-1)} \otimes h\}.
\end{equation}

The new cost function is:
\begin{equation}
  \textsf{cost}^{[2]}(x,h):= f_{z,32}(x,h)= h.
\end{equation}

The $4 \times 4$-transition matrix $T^{1 \rightarrow 2}$ is defined by:

\begin{equation}
  T^{1 \rightarrow 2}:= \left(\bar{cccc}
  \dun & \dzero & \dzero & \dzero \\
    \dzero & \dzero & \dzero & 2^{\otimes (-1)} \\  
  \dzero & \dzero & \dzero & \dzero \\
  \dzero & \dzero & \dzero & \dun 
  \ear \right). 
  \end{equation}

We calculate the following matrices $A^{+}:=A^{+[1]} \otimes T^{1
  \rightarrow 2}$ and $A^{-}:=A^{-[1]} \otimes T^{1 \rightarrow 2}$
and we obtain:
\[
A^{+}:= \left(\bar{cccc}
\dun & \dzero & \dzero & \dzero \\
\dzero & \dzero & \dzero & \dzero \\
\dzero &  \dzero &   \dzero & \dzero\\
\dzero &  \dzero  & \dzero & \dzero\\
\dzero &  \dzero & \dzero & \dun \\
\dzero &  \dzero  & \dzero & \dzero\\
\dzero &  \dzero  & \dzero & \dzero\\
\dzero &  \dzero  & \dzero & \dzero
\ear \right), \;
A^{-}:= \left(\bar{cccc}
\dzero & \dzero & \dzero & \dun \\
\dzero & \dzero & \dzero &  \dzero \\
\dzero & \dzero & \dzero &  \dzero \\
\dzero & \dzero & \dzero &  \dzero \\
\dzero & \dzero & \dzero &  \dun \\
\dzero & \dzero & \dzero &  \dzero \\
\dzero & \dzero & \dzero &  \dzero \\
\dzero & \dzero & \dzero &  \dzero
\ear \right).
\]

The
new cone $\mathcal{C}(A^{\; +}, A^{ \; -})^{[2]}$ is defined as follows:

\begin{subequations}

\begin{equation}
  \label{thelinsysofineq2}
A^{+[2]} \otimes w^{[2]} \geq  A^{-[2]} \otimes w^{[2]},
\end{equation}
  
with $w^{[2]}= (z,\dzero, \dzero, h)^{\intercal} \in \overline{\real}_{\dzero}^{4}$ and the matrices $(A^{\; +}, A^{ \; -})^{[2]}$
are defined as a result of $\textsf{setrowtozero}(A^{+}, A^{-})$:

\begin{equation}
A^{+[2]}:= \left(\bar{cccc}
\dun & \dzero & \dzero & \dzero \\
\dzero & \dzero & \dzero & \dzero \\
\dzero &  \dzero &   \dzero & \dzero\\
\dzero &  \dzero  & \dzero & \dzero\\
\dzero &  \dzero & \dzero & \dzero \\
\dzero &  \dzero  & \dzero & \dzero\\
\dzero &  \dzero  & \dzero & \dzero\\
\dzero &  \dzero  & \dzero & \dzero
\ear \right), \;
A^{-[2]}:= \left(\bar{cccc}
\dzero & \dzero & \dzero & \dun \\
\dzero & \dzero & \dzero &  \dzero \\
\dzero & \dzero & \dzero &  \dzero \\
\dzero & \dzero & \dzero &  \dzero \\
\dzero & \dzero & \dzero &  \dzero \\
\dzero & \dzero & \dzero &  \dzero \\
\dzero & \dzero & \dzero &  \dzero \\
\dzero & \dzero & \dzero &  \dzero
\ear \right).
  \end{equation}

\end{subequations}

We have: $\boldsymbol{-\infty}=A_{[|1,7|]h}^{+[2]} \geq A_{[|1,7|]h}^{-[2]} =\boldsymbol{-\infty}$. Thus, the
reachability condition (\ref{eqchok}) of Theorem~\ref{thmchattaignable} is verified. So, \textsf{mpr} has the
following solution:

\begin{equation}
z=h, \; x_{1} = 2^{\otimes (-1)} \otimes h, \; x_{2}=2 \otimes h.
\end{equation}
By taking $h=\dun$ we retrieve the solution of the Example 2 p. 1463 of \cite{kn:GKS012}.

\subsection{Fractional Example}
\label{subExfractional-de-Gaubertetal}
In this subsection we illustrate and compare our strongly polynomial method with the
pseudo-polynomial method applied on Example 3 p. 1469 of \cite{kn:GKS012}. The starting point of the 
pseudo-polynomial method is not a polyhedron but the homogenized cone of the following set
$\mathcal{P}(A,b,C,d) \cup \{\boldsymbol{\pm \infty}\}$ defined by:

\begin{equation}
A \otimes x \oplus b \geq C \otimes x \oplus d,
  \end{equation}
with
\begin{equation}
  A:=\left(\bar{ccc}
  \dzero & \dzero & \dzero \\
  \dzero & \dun & \dzero \\
  \dun & \dzero & \dzero \\
  \dun & \dzero & \dzero 
  \ear \right), \; b:=\left(\bar{c} \dun \\
  \dzero \\
  \dzero \\
  3
    \ear \right), \;  C:=\left(\bar{ccc}
  3^{\otimes (-1)} & 4^{\otimes (-1)} & \dzero \\
  1^{\otimes (-1)} & \dun & \dzero \\
  \dzero & \dzero & \dzero \\
  1 & \dzero & \dun 
  \ear \right), \; d:=\left(\bar{c} \dzero \\
  1 \\
  \dun \\
  \dzero
    \ear \right).
  \end{equation}
With the following change of notations $(A \; b) \leftrightarrow D$ and $(C \; d) \leftrightarrow C$. 

The fractional minimization problem \textsf{fmpr} we solve here is defined as:

\begin{equation}
\min\{z= x_{2} / 3 \otimes x_{1}, \; x=(x_{1}, x_{2}, x_{3})^{\intercal} \in \mathcal{P}(A,b,C,d)\}.
  \end{equation}

Applying the results of \S~\ref{seclinfraction}, the previous \textsf{fmpr} is transformed into the \textsf{mpr} problem defined by
(\ref{eqprobFractàresoudre})-(\ref{eqconefractprog})-(\ref{eqVartety})-(\ref{eqprobFractavec-h}) which is specified
hereafter.

\begin{equation}
  \bar{l}
  \min(z) \\
  z \geq \textsf{cost}^{[0]}(y,t,h):= y_{2}
  \ear
  \end{equation}
\begin{equation}
w^{[0]}:= (z, y_{1}, y_{2}, y_{3},t,h)^{\intercal} \in \mathcal{C}(A^{+}, A^{-})^{[0]}
\end{equation}

\begin{equation}
  (A^{+}, A^{-})^{[0]}:=\left(\bar{cccccc}
  \dun & \dzero & \dzero & \dzero & \dzero & \dzero \\
  \dzero & \dzero & \dzero & \dzero & \dun & \dzero \\
  \dzero & \dzero & \dun & \dzero & \dzero & \dzero \\
  \dzero & \dun & \dzero & \dzero & \dzero & \dzero \\
  \dzero & \dun & \dzero & \dzero & 3 & \dzero\\
        \dzero & 3 & \dzero & \dzero & \dzero & \dzero
        \ear\right), \; \left(\bar{cccccc}
        \dzero & \dzero & \dun & \dzero & \dzero & \dzero \\
        \dzero & 3^{\otimes (-1)} & 4^{\otimes (-1)} & \dzero & \dzero & \dzero \\
        \dzero & 1^{\otimes (-1)} & \dzero & \dzero & 1 & \dzero \\
                \dzero & \dzero & \dzero & \dzero & \dun & \dzero \\
                \dzero & 1 & \dzero & \dun & \dzero & \dzero \\
                                \dzero & \dzero & \dzero & \dzero & \dzero & \dun
        \ear\right).
  \end{equation}
The set $\{y_{1},y_{2}, y_{3},t\}$ is abbreviated by $(\y,t)$. 

The \textsf{mpr} is bounded with $I_{\geq}(y_{3}) = \emptyset$. $(\y,t)^{+}=\{y_{2}\}$
and $(\y,t)^{\dzero}=\{y_{1}, y_{3},t\}$. The dominating variables are in $D=\{y_{1}, y_{2}, t \}$ (see (\ref{eqDomvar})).
The condition (\ref{eqchok}) of Theorem~\ref{thmchattaignable} does not hold. \\

\noi
\textsc{Case $2.2.2$}, \S~\ref{subsubSubstit-cases} applies. We have (see \ref{eqdefID}):
$\I^{\geq}=\{(1,t), (2,2), (3,1),(4,t),(5,1) \}$. \\

\noi
We apply Theorem~\ref{theoNTZmin} with the set of valid inequalities $\I^{\geq}$.
Based on all the possibilities expressed in the next array using (\ref{a+a-ineq})-(\ref{ORa+a-ineq}):

\[
\bar{lllllll}
 \left( \bar{c} z \\ t
\ear \right)_{1} & \geq & \left( \bar{c} y_{2}\\
3^{\otimes (-1)} \otimes y_{1} \oplus 4^{\otimes (-1)} \otimes y_{2} \ear \right) & :: & \left(\bar{c}
\mbox{$[\dun \otimes \lambda, +\infty]$} \\ \mbox{$[ 3^{\otimes (-1)} \otimes \lambda,+\infty ]$} \ear \right) & :: & \left(\bar{c}
h\U \\  h\U \ear \right) \\
\left( \bar{c} z \\ y_{2}
\ear \right)_{2} & \geq & \left( \bar{c} 1^{\otimes (-1)} \otimes y_{1} \oplus 1 \otimes t\\
 1^{\otimes (-1)} \otimes y_{1} \oplus 1 \otimes t \ear \right) & :: & \left(\bar{c}
\mbox{$[1 \otimes \lambda, +\infty]$} \\ \mbox{$[1 \otimes \lambda,+\infty ]$} \ear \right) & :: & \left(\bar{c}
h\U \\  h\U \ear \right) \\
 \left( \bar{c} z \\ y_{1}
\ear \right)_{3} & \geq & \left( \bar{c} y_{2}\\
t \ear \right) & :: & \left(\bar{c}
\mbox{$[\dun \otimes \lambda, +\infty]$} \\ \mbox{$[\dun \otimes \lambda,+\infty ]$} \ear \right) & :: & \left(\bar{c}
h\U \\  h\U \ear \right) \\
\left( \bar{c} z \\ t
\ear \right)_{4} & \geq & \left( \bar{c} y_{2}\\
 2^{\otimes (-1)} \otimes y_{1} \oplus 3^{\otimes (-1)} \otimes y_{3} \ear \right) & :: & \left(\bar{c}
\mbox{$[\dun \otimes \lambda, +\infty]$} \\ \mbox{$[2^{\otimes (-1)} \otimes \lambda,+\infty ]$} \ear \right) & :: & \left(\bar{c}
h\U \\  h\U \ear \right) \\
\left( \bar{c} z \\ y_{1}
\ear \right)_{5} & \geq & \left( \bar{c} y_{2}\\
3^{\otimes (-1)} \otimes h \ear \right) & :: & \left(\bar{c}
\mbox{$[\dun \otimes \lambda, +\infty]$} \\ \mbox{$[3^{\otimes (-1)} \otimes \mu,+\infty ]$} \ear \right) & :: & \left(\bar{c}
h\U \\  h\B \ear \right) 
\ear
\]

we have: $\T=\U$ (see (\ref{eqdefTgeq})), $\mathcal{T}(\U,\I^{\geq})=\{(1,t), (2,2), (3,1),(4,t),(5,1) \}$
(see (\ref{mathcalTgeq})), $\textsf{argMin}(\theta)=\{(1,t), (3,1),(4,t),(5,1) \}$ with
  $\theta=[ \dun \otimes \lambda, +\infty]$ defined by (\ref{eqthetageq}). Because $\textsf{argMin}(\theta)$ is not
  a singleton we have: $\T'=\B$ (see (\ref{eqdefTT'geq})), $\mathcal{T}'(\B,\textsf{argMin}(\theta))=\{(5,1)\}$
  (see (\ref{mathcalTprimegeq}), $\textsf{argMin}(\theta')=\{(5,1)\}$ with $\theta'=[3^{\otimes (-1)} \otimes \mu,+\infty ]$
  with $\theta'$ defined by (\ref{eqthetaprimegeq}). Thus, \\
  $(i^{*},j^{*})=(5,1)$ and $y_{1}= 3^{\otimes (-1)} \otimes h $. 

  We apply the update procedure of \S~\ref{subsecNewcost+Newconepmrp} and the new elements of the problem are as
follows. \\

The set of linear equalities is now:

\begin{equation}
\mathcal{L}^{[1]}= \{y_{1}= 3^{\otimes (-1)} \otimes h\}.
\end{equation}

The new cost function is:
\begin{equation}
  \textsf{cost}^{[1]}(y,t,h):= f_{z,51}(y,t,h)=y_{2}.
\end{equation}

We define the
$6 \times 6$-transition matrix $T^{0 \rightarrow 1}$ as follows:

  \[
  T^{0 \rightarrow 1}:=\left(\bar{cccccc}
  \dun & \dzero & \dzero & \dzero & \dzero & \dzero \\
  \dzero & \dzero & \dzero & \dzero & \dzero & 3^{\otimes (-1)} \\
    \dzero & \dzero & \dun & \dzero & \dzero & \dzero \\
    \dzero & \dzero & \dzero & \dun & \dzero & \dzero \\
    \dzero & \dzero & \dzero & \dzero & \dun & \dzero \\
                \dzero & \dzero & \dzero & \dzero & \dzero & \dun
  \ear \right).
  \]

  We calculate the matrices $A^{+} = A^{+[0]} \otimes  T^{0 \rightarrow 1}$ and $A^{-} = A^{-[0]} \otimes  T^{0 \rightarrow 1}$ and it comes:

  \[
A^{+}=\left(\bar{cccccc}
  \dun & \dzero & \dzero & \dzero & \dzero & \dzero \\
  \dzero & \dzero & \dzero & \dzero & \dun & \dzero \\
  \dzero & \dzero & \dun & \dzero & \dzero & \dzero \\
  \dzero & \dzero & \dzero & \dzero & \dzero & 3^{\otimes (-1)} \\
  \dzero & \dzero & \dzero & \dzero & 3 & 3^{\otimes (-1)} \\
        \dzero & \dzero & \dzero & \dzero & \dzero & \dun
        \ear\right), \;
 A^{-}= \left(\bar{cccccc}
        \dzero & \dzero & \dun & \dzero & \dzero & \dzero \\
        \dzero & \dzero & 4^{\otimes (-1)} & \dzero & \dzero & 6^{\otimes (-1)} \\
        \dzero & \dzero & \dzero & \dzero & 1 & 4^{\otimes (-1)} \\
                \dzero & \dzero & \dzero & \dzero & \dun & \dzero \\
                \dzero & \dzero & \dzero & \dun & \dzero & 2^{\otimes (-1)}\\
                                \dzero & \dzero & \dzero & \dzero & \dzero & \dun
        \ear \right).      
        \]
We have $w^{[1]}=(z, \dzero,y_{2},y_{3},t,h)^{\intercal}$ and the matrices $(A^{\; +}, A^{ \; -})^{[1]}$
are defined as the result of $\textsf{setrowtozero}(A^{+}, A^{-})$ that is:

\begin{equation}
A^{+[1]}=\left(\bar{cccccc}
  \dun & \dzero & \dzero & \dzero & \dzero & \dzero \\
  \dzero & \dzero & \dzero & \dzero & \dun & \dzero \\
  \dzero & \dzero & \dun & \dzero & \dzero & \dzero \\
  \dzero & \dzero & \dzero & \dzero & \dzero & 3^{\otimes (-1)} \\
  \dzero & \dzero & \dzero & \dzero & 3 & 3^{\otimes (-1)} \\
        \dzero & \dzero & \dzero & \dzero & \dzero & \dzero
        \ear\right), \;
 A^{-[1]}= \left(\bar{cccccc}
        \dzero & \dzero & \dun & \dzero & \dzero & \dzero \\
        \dzero & \dzero & 4^{\otimes (-1)} & \dzero & \dzero & 6^{\otimes (-1)} \\
        \dzero & \dzero & \dzero & \dzero & 1 & 4^{\otimes (-1)} \\
                \dzero & \dzero & \dzero & \dzero & \dun & \dzero \\
                \dzero & \dzero & \dzero & \dun & \dzero & 2^{\otimes (-1)}\\
                                \dzero & \dzero & \dzero & \dzero & \dzero & \dzero
        \ear \right).  
  \end{equation}

We have $(\y,t)=\{y_{2}, y_{3},t\} \neq \emptyset$, $(\y,t)^{+}=\{y_{2}\}$ and $(\y,t)^{\dzero}=\{y_{3}, t\}$.
The \textsf{mpr} is bounded. Indeed, it is easy to check that
eg. $I_{\geq}(y_{2}) \neq \emptyset$, $I_{\leq}(y_{3}) \neq \emptyset$ and $I_{\geq}(t) \neq \emptyset$. The set of
dominating variables is $D= \{y_{2},t\}$ (see (\ref{eqDomvar})). The condition (\ref{eqchok}) of
Theorem~\ref{thmchattaignable} does not hold. \\

\noi
\textsc{Case $2.2.2$}, \S~\ref{subsubSubstit-cases} applies. We have
(see \ref{eqdefID}): $\I^{\geq}=\{(1,t), (2,2), (4,t)\}$. \\

We apply Theorem~\ref{theoNTZmin} with $\I^{\geq}=\{(1,t), (2,2), (4,t)\}$. \\

From the following array of possible inequalities provided using (\ref{a+a-ineq})-(\ref{ORa+a-ineq}):

\[
\bar{lllllll}
 \left( \bar{c} z \\ t
\ear \right)_{1} & \geq & \left( \bar{c} y_{2}\\
4^{\otimes (-1)} \otimes y_{2} \oplus 6^{\otimes (-1)} \otimes h\ear \right) & :: & \left(\bar{c}
\mbox{$[\dun \otimes \lambda, +\infty]$} \\ \mbox{$[ 6^{\otimes (-1)} \otimes \mu,+\infty ]$} \ear \right) & :: & \left(\bar{c}
h\U \\  h\B \ear \right) \\
\left( \bar{c} z \\ y_{2}
\ear \right)_{2} & \geq & \left( \bar{c} 1 \otimes t \oplus 4^{\otimes (-1)} \otimes h \\
 1 \otimes t \oplus 4^{\otimes (-1)} \otimes h\ear \right) & :: & \left(\bar{c}
\mbox{$[4^{\otimes (-1)} \otimes \mu, +\infty]$} \\ \mbox{$[4^{\otimes (-1)} \otimes \mu,+\infty ]$} \ear \right) & :: & \left(\bar{c}
h\B \\  h\B \ear \right) \\
\left( \bar{c} z \\ t
\ear \right)_{4} & \geq & \left( \bar{c} y_{2}\\
 3^{\otimes (-1)} \otimes y_{3} \oplus 5^{\otimes (-1)} \otimes h \ear \right) & :: & \left(\bar{c}
\mbox{$[\dun \otimes \lambda, +\infty]$} \\ \mbox{$[5^{\otimes (-1)} \otimes \mu,+\infty ]$} \ear \right) & :: & \left(\bar{c}
h\U \\  h\B \ear \right).
\ear
\]
we have $\T=\B$(see (\ref{eqdefTgeq})), $\mathcal{T}(\B,\I^{\geq})=\{(2,2)\}$ (see (\ref{mathcalTgeq})),
$\textsf{argMin}(\theta)=\{(2,2)\}$ with $\theta=[4^{\otimes (-1)} \otimes \mu, +\infty]$ defined by (\ref{eqthetageq}),
$S_{\theta}^{\geq}=\{y_{2}\}$ (see (\ref{Sthetageq})). $\textsf{argMin}(\theta)$ is a singleton thus: \\
$(i^{*},j^{*})=(2,2)$ and $y_{2}= 1 \otimes t \oplus 4^{\otimes (-1)} \otimes h$ \\

We apply the update procedure of \S~\ref{subsecNewcost+Newconepmrp} and the new elements of the problem are as
follows. \\

The set of linear equalities is now:

 \begin{equation}
   \mathcal{L}^{[2]} = \mathcal{L}^{[1]} \cup \{y_{2} = 1 \otimes t \oplus 4^{\otimes (-1)} \otimes h\} =
   \{y_{1} = 3^{\otimes (-1)} \otimes h, y_{2} = 1 \otimes t \oplus 4^{\otimes (-1)} \otimes h\}.
    \end{equation}

The new cost function is:
\begin{equation}
  \textsf{cost}^{[2]}(y,t,h)=  1 \otimes t \oplus 4^{\otimes (-1)} \otimes h.
  \end{equation}

We define the $6 \times 6$-transition matrix $T^{1 \rightarrow 2}$ which corresponds to
$y_{2}=1 \otimes t \oplus 4^{\otimes (-1)} \otimes h$ as follows:

  \[
  T^{1 \rightarrow 2}:=\left(\bar{cccccc}
  \dun & \dzero & \dzero & \dzero & \dzero & \dzero \\
  \dzero & \dzero & \dzero & \dzero & \dzero & \dzero \\
    \dzero & \dzero & \dzero & \dzero & 1 & 4^{\otimes (-1)} \\
    \dzero & \dzero & \dzero & \dun & \dzero & \dzero \\
    \dzero & \dzero & \dzero & \dzero & \dun & \dzero \\
                \dzero & \dzero & \dzero & \dzero & \dzero & \dun
  \ear \right).
  \]

  We calculate the matrices $A^{+} = A^{+[1]} \otimes  T^{1 \rightarrow 2}$ and $A^{-} = A^{-[1]} \otimes  T^{1 \rightarrow 2}$ and it comes:

  \[
  A^{+}=\left(\bar{cccccc}
  \dun & \dzero & \dzero & \dzero & \dzero & \dzero \\
  \dzero & \dzero & \dzero & \dzero & \dun & \dzero \\
  \dzero & \dzero & \dzero & \dzero & 1 &  4^{\otimes (-1)} \\
  \dzero & \dzero & \dzero & \dzero & \dzero & 3^{\otimes (-1)} \\
  \dzero & \dzero & \dzero & \dzero & 3 & 3^{\otimes (-1)} \\
        \dzero & \dzero & \dzero & \dzero & \dzero & \dzero
        \ear\right), \;
 A^{-}= \left(\bar{cccccc}
        \dzero & \dzero & \dun & \dzero & 1 & 4^{\otimes (-1)} \\
        \dzero & \dzero & \dzero & \dzero & 3^{\otimes (-1)}  & 6^{\otimes (-1)} \\
        \dzero & \dzero & \dzero & \dzero & 1 & 4^{\otimes (-1)} \\
                \dzero & \dzero & \dzero & \dzero & \dun & \dzero \\
                \dzero & \dzero & \dzero & \dun & \dzero & 2^{\otimes (-1)}\\
                                \dzero & \dzero & \dzero & \dzero & \dzero & \dzero
        \ear \right).  
\]

We have $w^{[2]}=(z, \dzero, \dzero,y_{3},t,h)^{\intercal}$ and the matrices $(A^{\; +}, A^{ \; -})^{[2]}$
are defined as the result of $\textsf{setrowtozero}(A^{+}, A^{-})$:

\begin{equation}
 A^{+[2]}=\left(\bar{cccccc}
  \dun & \dzero & \dzero & \dzero & \dzero & \dzero \\
  \dzero & \dzero & \dzero & \dzero & \dun & \dzero \\
  \dzero & \dzero & \dzero & \dzero & \dzero &  \dzero \\
  \dzero & \dzero & \dzero & \dzero & \dzero & 3^{\otimes (-1)} \\
  \dzero & \dzero & \dzero & \dzero & 3 & 3^{\otimes (-1)} \\
        \dzero & \dzero & \dzero & \dzero & \dzero & \dzero
        \ear\right), \;
 A^{-[2]}= \left(\bar{cccccc}
        \dzero & \dzero & \dun & \dzero & 1 & 4^{\otimes (-1)} \\
        \dzero & \dzero & \dzero & \dzero & 3^{\otimes (-1)}  & 6^{\otimes (-1)} \\
        \dzero & \dzero & \dzero & \dzero & \dzero & \dzero \\
                \dzero & \dzero & \dzero & \dzero & \dun & \dzero \\
                \dzero & \dzero & \dzero & \dun & \dzero & 2^{\otimes (-1)}\\
                                \dzero & \dzero & \dzero & \dzero & \dzero & \dzero
        \ear \right).  
\
\end{equation}

Now, $(\y,t)=\{y_{3}, t\} \neq \emptyset$ and the $\textsf{mpr}$ problem is bounded because one can check
eg. that: $I_{\leq}(y_{3}) \neq \emptyset$ and $I_{\geq}(t) \neq \emptyset$. We have
$(\y,t)^{+}=\{t\}$ and $(\y,t)^{\dzero}=\{y_{3}\}$. The set of dominating variables is
$D=\{t\}$ (see (\ref{eqDomvar})). Indeed, it is easy to see here that $I_{\geq}(y_{3})=\emptyset$ because
$A_{[|1,m|]y_{3}}^{+[0]} = \boldsymbol{-\infty}$. Clearly, the condition (\ref{eqchok}) of
Theorem~\ref{thmchattaignable} does not hold. \\

\noi
\textsc{Case $2.2.2$}, \S~\ref{subsubSubstit-cases} applies. We have (see \ref{eqdefID}):
$\I^{\geq}=I_{\geq}(t)=\{(1,t), (4,t)\}$. \\

We apply Theorem~\ref{theoNTZmin} with $\I^{\geq}=\{(1,t), (4,t)\}$. \\

From the following array of possible inequalities provided using (\ref{a+a-ineq})-(\ref{ORa+a-ineq}):

\[
\bar{lllllll}
 \left( \bar{c} z \\ t
\ear \right)_{1} & \geq & \left( \bar{c}  4^{\otimes (-1)} \otimes h \\
 6^{\otimes (-1)} \otimes h \ear \right) & :: & \left(\bar{c}
\mbox{$[4^{\otimes (-1)} \otimes \mu, +\infty]$} \\ \mbox{$[6^{\otimes (-1)} \otimes \mu,+\infty ]$} \ear \right) & :: & \left(\bar{c}
  h\B \\  h\B \ear \right) \\
\left( \bar{c} z \\ t
\ear \right)_{4} & \geq & \left( \bar{c} 3^{\otimes (-1)} \otimes y_{3} \oplus 4^{\otimes (-1)} \otimes h \\
3^{\otimes (-1)} \otimes y_{3} \oplus  5^{\otimes (-1)} \otimes h \ear \right) & :: & \left(\bar{c}
    \mbox{$[4^{\otimes (-1)} \otimes \mu, +\infty]$} \\
    \mbox{$[5^{\otimes (-1)} \otimes \mu,+\infty ]$} \ear \right) & :: & \left(\bar{c}
  h\B \\  h\B \ear \right).
\ear
\]
one has $\T= \B$, (see (\ref{eqdefTgeq})), $ \mathcal{T}(\B,\I^{\geq})=\{(1,t),(4,t)\}$ (see (\ref{mathcalTgeq})),
$\textsf{argMin}(\theta)=\{(1,t),(4,t)\}$ with $\theta=[4^{\otimes (-1)} \otimes \mu, +\infty]$ defined by (\ref{eqthetageq}).
$\textsf{argMin}(\theta)$ is not a singleton. So, we have $\T'=\B$ (see (\ref{eqdefTT'geq})),
$\mathcal{T}'(\B,\textsf{argMin}(\theta))=\{(1,t),(4,t)\}$ (see (\ref{mathcalTprimegeq})),
$\textsf{argMin}(\theta')=\{(4,t)\}$ with $\theta'=[5^{\otimes (-1)} \otimes \mu,+\infty ]$ defined by (\ref{eqthetaprimegeq}).
Thus, \\
$(i^{*}, j^{*}) \in \textsf{argMin}(\theta')=\{(4,t)\}$ and $t=3^{\otimes (-1)} \otimes y_{3} \oplus 5^{\otimes(-1)}\otimes h$. \\

We apply the update procedure of \S~\ref{subsecNewcost+Newconepmrp}. \\

The set of linear equalities is now:

\begin{equation}
  \mathcal{L}^{[3]} = \{y_{1} = 3^{\otimes (-1)} \otimes h, y_{2} = 1 \otimes t \oplus 4^{\otimes (-1)} \otimes h\} \cup
    \{t=3^{\otimes (-1)} \otimes y_{3} \oplus  5^{\otimes (-1)} \otimes h\}.
\end{equation}

And the new cost function is:
\begin{equation}
\textsf{cost}^{[3]}(y,t,h)= 3^{\otimes (-1)} \otimes y_{3} \oplus 4^{\otimes (-1)} \otimes h.
  \end{equation}

We define the $6 \times 6$-transition matrix $T^{1 \rightarrow 2}$ as follows:

  \[
  T^{2 \rightarrow 3}:=\left(\bar{cccccc}
  \dun & \dzero & \dzero & \dzero & \dzero & \dzero \\
  \dzero & \dun & \dzero & \dzero & \dzero & \dzero \\
    \dzero & \dzero & \dun & \dzero & \dzero & \dzero \\
    \dzero & \dzero & \dzero & \dun & \dzero & \dzero \\
    \dzero & \dzero & \dzero & 3^{\otimes (-1)} & \dzero & 5^{\otimes (-1)} \\
                \dzero & \dzero & \dzero & \dzero & \dzero & \dun
  \ear \right).
  \]
 We calculate the matrices $A^{+} = A^{+[2]} \otimes  T^{2 \rightarrow 3}$ and $A^{-} = A^{-[2]} \otimes  T^{2 \rightarrow 3}$ and we have:

 \[
 A^{+}=\left(\bar{cccccc}
  \dun & \dzero & \dzero & \dzero & \dzero & \dzero \\
  \dzero & \dzero & \dzero & 3^{\otimes (-1)} & \dzero & 5^{\otimes (-1)} \\
  \dzero & \dzero & \dzero & \dzero & \dzero &  \dzero \\
  \dzero & \dzero & \dzero & \dzero & \dzero & 3^{\otimes (-1)} \\
  \dzero & \dzero & \dzero & \dun & \dzero & 2^{\otimes (-1)} \\
        \dzero & \dzero & \dzero & \dzero & \dzero & \dzero
        \ear\right), \;
 A^{-}= \left(\bar{cccccc}
        \dzero & \dzero & \dzero & 3^{\otimes (-1)} & \dzero & 4^{\otimes (-1)} \\
        \dzero & \dzero & \dzero & 6^{\otimes (-1)} & \dzero  & 6^{\otimes (-1)} \\
        \dzero & \dzero & \dzero & \dzero & \dzero & \dzero \\
                \dzero & \dzero & \dzero & 3^{\otimes (-1)} & \dzero & 5^{\otimes (-1)}\\
                \dzero & \dzero & \dzero & \dun & \dzero & 2^{\otimes (-1)}\\
                                \dzero & \dzero & \dzero & \dzero & \dzero & \dzero
                                \ear \right).
\]
We have $w^{[3]}=(z, \dzero, \dzero,y_{3}, \dzero,h)^{\intercal}$ and the matrices $(A^{\; +}, A^{ \; -})^{[3]}$
are defined as the result of $\textsf{setrowtozero}(A^{+}, A^{-})$:

\begin{equation}
A^{+[3]}=\left(\bar{cccccc}
  \dun & \dzero & \dzero & \dzero & \dzero & \dzero \\
  \dzero & \dzero & \dzero & \dzero & \dzero & \dzero \\
  \dzero & \dzero & \dzero & \dzero & \dzero &  \dzero \\
  \dzero & \dzero & \dzero & \dzero & \dzero & 3^{\otimes (-1)} \\
  \dzero & \dzero & \dzero & \dzero & \dzero & \dzero \\
        \dzero & \dzero & \dzero & \dzero & \dzero & \dzero
        \ear\right), \;
 A^{-[3]}= \left(\bar{cccccc}
        \dzero & \dzero & \dzero & 3^{\otimes (-1)} & \dzero & 4^{\otimes (-1)} \\
        \dzero & \dzero & \dzero & \dzero & \dzero  & \dzero \\
        \dzero & \dzero & \dzero & \dzero & \dzero & \dzero \\
                \dzero & \dzero & \dzero & 3^{\otimes (-1)} & \dzero & 5^{\otimes (-1)}\\
                \dzero & \dzero & \dzero & \dzero & \dzero & \dzero \\
                                \dzero & \dzero & \dzero & \dzero & \dzero & \dzero
                                \ear \right).
  \end{equation}

We have:
\[
A_{[|1,5|]h}^{+[3]}= \left(\bar{c}
\dzero \\
\dzero \\
3^{\otimes (-1)} \\
\dzero \\
\dzero
\ear \right) \geq A_{[|1,5|]h}^{-[3]} = \left(\bar{c}
\dzero \\
\dzero \\
5^{\otimes (-1)} \\
\dzero \\
\dzero
\ear \right),
\]
and the condition (\ref{eqchok}) of Theorem~\ref{thmchattaignable} is verified. Then, 
the function $h \mapsto 4^{\otimes (-1)} \otimes h$ of the cost function 
$\textsf{cost}^{[3]} (y,t,h)= 3^{\otimes (-1)} \otimes y_{3} \oplus 4^{\otimes (-1)} \otimes h$
is reached at $y_{3} = \dzero$. Thus, we have to solve the following triangular linear system of equalities
$\mathcal{L}^{[4]}$:

\begin{equation}
\bar{l}
\mathcal{L}^{[4]}: \\
z=3^{\otimes (-1)} \otimes y_{3} \oplus 4^{\otimes (-1)} \otimes h \\
y_{1} = 3^{\otimes (-1)} \otimes h \\
  y_{2} = 1 \otimes t \oplus 4^{\otimes (-1)} \otimes h \\
  t=3^{\otimes (-1)} \otimes y_{3} \oplus  5^{\otimes (-1)} \otimes h \\
  y_{3}= \dzero.
  \ear
\end{equation}

And we have the solution of the \textsf{mpr} obtained by obvious backward substitution in $\mathcal{L}^{[4]}$:
\begin{equation}
  \min(z)= 4^{\otimes (-1)} \otimes h, \;
  y=\left(\bar{c} 3^{\otimes (-1)} \otimes h \\ 4^{\otimes (-1)} \otimes h \\ \dzero \ear \right)
    , \; t= 5^{\otimes (-1)} \otimes h.
\end{equation}

If we take $h=\dun$ then the solution $x$ to the initial \textsf{fmpr} is: $x= t^{\otimes (-1)} \otimes y = \left(\bar{c} 2 \\ 1 \\ \dzero \ear \right)$
and $x_{2} / 3 \otimes x_{1} = 1 \otimes (3 \otimes 2)^{\otimes (-1)} = 1 \otimes 5^{\otimes (-1)} = 4^{\otimes (-1)}$.
We retrieve the minimum ($(-4)$ with the usual notations for reals) found
by the pseudo-polynomial method developed in \cite{kn:GKS012} and applied on Example 3 p. 1469 of \cite{kn:GKS012}.

\section{Conclusion}
\label{secConcl}

In this conclusion we try to provide a more precise analysis of the complexity of the
substitution method. We also compare our result with other known results. \\

For the forward substitution we have to establish the hierarchy between the variables of the problem
we need to study $m$ inequalities
of the form (\ref{a+a-ineq}) which generate at most $n$ \textsf{AND}-inequalities (\ref{ANDa+a-ineq}) and
at most $n$ \textsf{OR}-inequalities (\ref{ORa+a-ineq}): $\mathcal{O}(2nm)$. From the \textsf{OR}-inequalities
the computation complexity of the functions $f^{\geq}_{ij}$
(\ref{deffij})-(\ref{deflij})-(\ref{eqdefuij})-(\ref{eqdefrij}) is $\mathcal{O}(nm)$. From the
$f^{\geq}_{ij}$ the computation complexity of the $f_{z,ij}$ functions (the possible next
cost function (\ref{fzij2})) is also  $\mathcal{O}(nm)$. To decide which variable can be substituted
we need Theorem~\ref{theoNTZmin} and compute the intervals $f_{z,ij}(R_{\lambda \mu}(x,h))$ and
$f_{ij}(R_{\lambda \mu}(x,h))$ in $\mathcal{O}(2nm)$. This procedure is repeated $n$ times and the overall complexity of the
forward susbtitution is thus: 
\[
\mathcal{O}(6n^{2}m).
\]
Now we have to add the time complexity of the resolution of the triangular system of equalities
$\mathcal{L}^{[n]}$ which is $\mathcal{O}(n)$ because we do not need to complute a
pseudo inverse of a matrix by residuation theory (see eg. \cite{kn:Bac-cooq}). This is
only backward substitution. Finally, the update procedure in $\mathcal{O}(mn^{2})$ (see \S~\ref{subsecNewcost+Newconepmrp})
is applied $n$ times thus, the
whole complexity is:
\begin{equation}
  \label{eqComplexiteTotale}
\mathcal{O}(6n^{2}m) + \mathcal{O}(n) + \mathcal{O}(mn^{3}) \approx \mathcal{O}(mn^{3}).
\end{equation}

Let us recall that to the best of our knowledge this substitution method is the first strongly polynomial method
for such problem. All other proposed schemes of resolution are pseudo-polynomial (see \cite{kn:BA08}, \cite{kn:GKS012},
\cite{kn:GMH014}). Thus, because $(\max,+)$-linear programming is now proved to be strongly polynomial
then the mean payoff games problem is also strongly polynomial (see eg. \cite{kn:GKS012}). Let us
remark that this result was already proved in \cite{truffet:hal-05491586} (english paper)
when solving the max-atom problem (MAP).
The important consequence of this result is that other six PTIME equivalent
problems to \textsf{MAP} which were known to be in NP $\cap$ co-NP
(see eg \cite{Akianetal2010}, \cite{Bezemetal2010}, \cite{Condon1992},
\cite{Ehren1979}, \cite{Galloetal1993}, \cite{Gurvichetal1988},
\cite{Mine2001}, \cite{Mohringetal2004}, \cite{Mieuw2006}, \cite{Zwick1996}) are also
strongly polynonial. We list them hereafter. P1: Looking for non trivial
  solutions of a tropical cone, P2: Computation of a tropical rank
  of a matrix, P3: Computation of optimal strategies in parity games
  (typically: Mean Payoff Games), P4: Scheduling with and/or
  precedence constraints, P5: Shortest path problem in hypergraph,
  P6: Model checking and $\mu$-calculus. \\
  And we repeat that the best pseudo-polynomial method known till now
  can be replaced by our strongly polynomial approach.

\bibliographystyle{plain}
\bibliography{cone9th}

\appendix

\section{Maximizing a tropical linear problem}
\label{secMPR}
In this appendix we follow the same organization of the main part of the paper dealing
with the minimization problem. All the results below are listed without proofs. \\

To formulate the maximization problem $\textsf{MPR}$ first we borrow the Fourier's trick
(see eg. \cite{kn:Juhel2025}, \cite{kn:Williams86}) used
in the usual linear algebra and applying it to the $(\max,+)$-algebra. It comes
that $\max(z=c^{\intercal} \otimes x \oplus c_{h} \otimes h)$ is replaced by:
$\max(z) \mbox{ and } z \leq c^{\intercal} \otimes x \oplus c_{h} \otimes h$. And then, we will solve the following
minimization problem $\textsf{MPR}$ defined as:

\begin{subequations}
  \begin{equation}
    \label{maxz}
\bar{l}
\max(z)
\ear
    \end{equation}
  such that:
  \begin{equation}
(x,h) \in \overline{\real}_{\dzero}^{n+1},
  \end{equation}
  and
 \begin{equation}
   \label{mprContraintesmax}
   \bar{l}
    z \leq c^{\intercal} \otimes x \oplus c_{h} \otimes h \\
    A \otimes x \oplus b \otimes h \leq C \otimes x \oplus d \otimes h.
  \ear
\end{equation}
 Where the homogenization variable $h$ satisfies the following condition:
 \begin{equation}
   \label{cond-h-different-infinimax}
h < +\infty.
 \end{equation}
And
 \begin{equation}
\mbox{$z$ and $h$ are not substituable}.
 \end{equation}
 Finally, among the variables of the problem $z, x_{1}, \ldots, x_{n}$ we have:
\begin{equation}
  \label{zpriomax}
  \mbox{$z$ has the highest priority because bounded by the cost function}.
  \end{equation}

\end{subequations}

When necessary such problem will be denoted:
\\
$\textsf{MPR}(A,b,C,d,c,c_{h},z,x,h,(m,n))$. \\

To the cone $\mathcal{C}(L,W):=\{x: L \otimes x \leq W \otimes x\}$
we associate the function $\overline{\textsf{setrowtozero}}(L,W)$ defined by:

  \begin{equation}
    \label{ligneazeromax}
    \bar{lll}
    \overline{\textsf{setrowtozero}}(L,W) & := & \mbox{ For $i=1$ to $m$ do} \\
    \mbox{ } & \mbox{} & \mbox{ if $l_{i,.} \leq w_{i,.}$ then $l_{i,.}:=\dzero,  w_{i,.}:=\dzero$}
    \ear
  \end{equation}

For the \textsf{MPR} we will consider the following familly of intervals:

\begin{equation}
  \label{defIntervalu+infinimax}
\mathcal{J}:=\{[\dzero, u], u \in \real_{\dzero}\}.
  \end{equation}

\begin{propo}
  \label{propMinIntervalmax}
  Let $[\dzero,u_{l}]$, $l=1, \ldots,k$, $k \geq 2$ be $k$ intervals of $\mathcal{J}$ with
For all $u_{k} \leq \ldots \leq u_{1}$, we have the following intervals inclusion:
  \begin{equation}
[\dzero,u_{k}] \subseteq [\dzero, u_{k-1}] \subseteq \cdots \subseteq [\dzero,u_{1}].
    \end{equation}
  \end{propo}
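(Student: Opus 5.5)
The statement to prove is Proposition~\ref{propMinIntervalmax}: for $u_{k} \leq \cdots \leq u_{1}$, the intervals $[\dzero,u_{l}]$ of $\mathcal{J}$ are nested. It is the dual of Proposition~\ref{propMinInterval}, so I will mirror that result, working directly from the definition of an interval in the ordered set $(\overline{\real}_{\dzero},\leq)$: $[\dzero,u]=\{t \in \overline{\real}_{\dzero} : \dzero \leq t \leq u\}$. Since $\dzero$ is the least element of $\overline{\real}_{\dzero}$, this set is simply $\{t : t \leq u\}$.

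First I prove a two-interval lemma: if $u' \leq u$ then $[\dzero,u'] \subseteq [\dzero,u]$. Take $t \in [\dzero,u']$. Then $t \leq u'$ and $u' \leq u$, so $t \leq u$ by transitivity, and $\dzero \leq t$ holds trivially. Hence $t \in [\dzero,u]$. If one prefers the dioid formulation $a \leq b \Leftrightarrow a \oplus b = b$, transitivity is associativity of $\oplus$: from $t \oplus u' = u'$ and $u' \oplus u = u$ we get $t \oplus u = t \oplus (u' \oplus u) = (t \oplus u') \oplus u = u' \oplus u = u$.

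Next I apply the lemma to each consecutive pair $(u_{l+1},u_{l})$, $l=1,\ldots,k-1$. The hypothesis $u_{k} \leq \cdots \leq u_{1}$ gives $[\dzero,u_{l+1}] \subseteq [\dzero,u_{l}]$ for each such $l$. Chaining these inclusions gives the stated series.

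The only point needing care, and the nearest thing to an obstacle, is the role of the endpoints. Here $u_{l} \in \real_{\dzero}$, so $+\infty$ is excluded, but the argument never uses this; it goes through even at the degenerate endpoint $u=\dzero$, where $[\dzero,\dzero]=\{\dzero\}$. As a by-product I would record the dual equalities, in the same way the paper derived (\ref{eq-intersection=Min-interval}) and (\ref{eq-union=Max-interval}): $\cap_{l}[\dzero,u_{l}]=[\dzero,u_{k}]$ and $\cup_{l}[\dzero,u_{l}]=[\dzero,u_{1}]$.
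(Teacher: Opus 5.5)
Your proof is correct and is the intended argument. The paper states this proposition (like its dual, Proposition~\ref{propMinInterval}) without proof and treats the nesting as immediate from transitivity of $\leq$ on $\overline{\real}_{\dzero}$; your two-interval lemma and the chaining over consecutive pairs make that explicit, and your by-product $\cap_{l}[\dzero,u_{l}]=[\dzero,u_{k}]$ agrees with the paper's (\ref{eq-intersection=Min-intervalmax}).
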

If $\overline{\textsf{Min}}$ denotes the minimum in the sense of interval inclusion then:

\begin{equation}
  \label{eq-intersection=Min-intervalmax}
\cap_{l=1}^{k} [\dzero,u_{l}]=[\dzero, u_{k}] = \overline{\textsf{Min}}\{[\dzero , u_{l}], l=1, \ldots,k\}.
  \end{equation}

At the step $0$ of the substitution the cone associated with the
constraints system (\ref{mprContraintes}) of the $\textsf{MPR}$
problem is denoted $\mathcal{C}(A^{\; +}, A^{ \; -})^{[0]}$ and is
defined by:

\begin{subequations}
\begin{equation}
  \mathcal{C}(A^{+}, A^{-})^{[0]}:=\{w \in \overline{\real}_{\dzero}^{n+2}: A^{+ [0]} \otimes w \leq
  A^{-[0]} \otimes w \}
  \end{equation}
where:

\begin{equation}
  (A^{+}, A^{-}, w)^{[0]}:=\left(\bar{ccc}
  \dun & \boldsymbol{-\infty}^{\intercal} & \dzero \\
  \boldsymbol{-\infty} & A & b
  \ear\right), \; \left(\bar{ccc}
  \dzero & c^{\intercal} & c_{h} \\
  \boldsymbol{-\infty} & C & d
  \ear\right), \; \left(\bar{c}
z \\ x \\ h
  \ear\right).
  \end{equation}
  \end{subequations}

\noi
We also use the same numerotation convention~\ref{numMatcol} p. \pageref{numMatcol} for matrices and
vectors involved in this problem.

The cost function at step $0$ of the substitution is denoted and defined by: $\textsf{cost}^{[0]}(x,h):= c^{\intercal} \otimes x \oplus c_{h} \otimes h$,
with $c_{h}=\dzero$. \\

The set of the stored linear equalities is denoted $\mathcal{L}^{[k]}$ at each step $k$ of the
substitution. And for $k=0$ we have $\mathcal{L}^{[0]}= \emptyset$. \\

The vector $x$ is called the vector of the remaining variables of the \textsf{MPR}. Of course at
step $0$ of the method: $x=(x_{j})_{j=1}^{n}$. To $x$ we associate the set of the remaining variables
denoted $\x$.

The parametrized domain of research of a maximum is denoted $\tilde{R}_{\tilde{\lambda} \tilde{\mu}}(x,h)$ and defined by:

\begin{subequations}
\begin{equation}
  \label{eqRxhmax}
\tilde{R}_{\tilde{\lambda} \tilde{\mu}}(x,h):= \times_{x_{j} \in \x} [\dzero, \tilde{\lambda}] \times [\dzero, \tilde{\mu}].
  \end{equation}
Where because $h$ must be $ < +\infty$ we can assume that the following condition holds:

\begin{equation}
  \label{eqmudominelambdamax}
  \forall \theta \neq \dzero, \forall \beta, \;  \exists \tilde{\lambda}, \tilde{\mu}: \;
 \beta \otimes \tilde{\mu} < \theta \otimes \tilde{\lambda}.
\end{equation}
\end{subequations}

Based on the result of Proposition~\ref{propineqValid}
$\forall i=1, \ldots,m$, $\forall j=1, \ldots, n$ such that the following condition is
satisfied:

\begin{equation}
  \label{condExistgeqmax}
  \mbox{$a^{+}_{i,j} \neq \dzero$ and
    $a^{+}_{i,j} > a^{-}_{i,j}$}
\end{equation}

we define the
following valid inequality:

\begin{equation}
  \label{ineqIminmax}
I_{\leq}(x_{j},i):=\{a^{+}_{i,j} \otimes x_{j} \leq a^{-}_{i,.} \otimes w\}.
  \end{equation}
Under condition (\ref{condExistgeqmax}) the inequality $I_{\leq}(x_{j},i)$ can be rewritten as:

    \begin{equation}
I_{\leq}(x_{j},i)=\{x_{j} \leq f^{\leq}_{ij}(x,h)\}.
      \end{equation}

Where the $(x,h)$-linear function $f^{\leq}_{ij}(x,h)$ is defined by:

    \begin{subequations}
      \begin{equation}
        \label{deffijmax}
f^{\leq}_{ij}(x,h):= \ell_{ij}(x) \oplus r_{ij} \otimes h, 
    \end{equation}
      with $\ell_{ij}$ is the following $x$-linear function which does not
      depend on $x_{j}$:
    \begin{equation}
      \label{deflijmax}
\ell_{ij}(x):= v^{\intercal}_{ij} \otimes x,
    \end{equation}
    where $v_{ij}$ denotes the  $n$-dimensional vector such that:
    \begin{equation}
      \label{eqdefuijmax}
      \forall j'=1, \ldots, n: \; v_{ij,j'}=\left\{\bar{cc}
      (a^{+}_{i,j})^{\otimes (-1)} \otimes a^{-}_{i,j'} & \mbox{if $j' \neq j$} \\
      \dzero & \mbox{if $j'=j$}
      \ear \right.
    \end{equation}
   and the scalar $r_{ij}$ is defined by:
    \begin{equation}
      \label{eqdefrijmax}
r_{ij}:= (a^{+}_{i,j})^{\otimes (-1)} \otimes d_{i}.
      \end{equation}
    \end{subequations}

    Assuming that $I_{\leq}(x_{j},i) \neq \emptyset$, or equivalently be a  valid inequality, we define the following function:

    \begin{equation}
      \label{fzijmax}
f_{z,ij}(x,h):= \oplus_{j' \neq j} c_{j'} \otimes x_{j'} \oplus c_{j} \otimes f^{\leq}_{ij}(x,h) \oplus c_{h} \otimes h.
      \end{equation}
    By replacing $f^{\leq}_{ij}(x,h)$ in (\ref{fzij}) we obtain the new expression for $f_{z,ij}(x,h)$:

    \begin{equation}
      \label{fzij2max}
f_{z,ij}(x,h):= \oplus_{j' \neq j} (c_{j'} \oplus c_{j} \otimes v_{ij,j'}) \otimes x_{j'} \oplus (c_{h} \oplus c_{j} \otimes r_{ij}) \otimes h.
      \end{equation}

The next Theorem will be useful to justify the substitution method developed in this paper.

    \begin{theo}[Saturation of an inequality]
      \label{theoMinCimax}
      For all $x_{j}$ such that $I_{\leq}(x_{j},i)$ is valid in the sense of Proposition~\ref{propineqValid} we have:

      \begin{equation}
        \label{infimumzxjmax}
        \sup_{\{x: \; x_{j} \leq f^{\leq}_{ij}(x,h)\}} \left(\bar{c} c^{\intercal} \otimes x \oplus c_{h} \otimes h \\
        x_{j} \ear \right) \; = \left(\bar{c} f_{z,ij}(x,h) \\
        f^{\leq}_{ij}(x,h)
        \ear \right).
        \end{equation}
      And the supremum is reached at $x$ such that we have:
      \begin{equation}
        \label{substitxjmax}
x_{j} = f^{\leq}_{ij}(x,h).
        \end{equation}
      
    \end{theo}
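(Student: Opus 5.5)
The argument dualizes the proof of Theorem~\ref{theoMinCi}, replacing infimum by supremum and lower bounds by upper bounds. It rests on two facts. First, a valid inequality $I_{\leq}(x_{j},i)$ can be rewritten as an explicit upper bound on $x_{j}$ that does not involve $x_{j}$; this is Proposition~\ref{propineqValid} applied with the relation $\leq$. Second, every $(x,h)$-linear function is non-decreasing in each variable with respect to the natural order of $\overline{\real}_{\dzero}$.

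The plan has three steps.

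\textbf{Step 1: rewrite the constraint.} Because $a^{+}_{i,j} \neq \dzero$ and $a^{+}_{i,j} > a^{-}_{i,j}$ (condition (\ref{condExistgeqmax})), Proposition~\ref{propineqValid} lets us move the $x_{j}$ term of the right-hand side into the left-hand side and divide by $a^{+}_{i,j}$. The inequality $a^{+}_{i,j} \otimes x_{j} \leq a^{-}_{i,.} \otimes w$ is then equivalent to $x_{j} \leq f^{\leq}_{ij}(x,h)$. Here $f^{\leq}_{ij}$ is given by (\ref{deffijmax})--(\ref{eqdefrijmax}) and does not depend on $x_{j}$.

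\textbf{Step 2: bound the supremum from above.} Fix the remaining variables $x_{j'}$ ($j' \neq j$) and $h$. On the feasible set, $x_{j}$ is bounded by $f^{\leq}_{ij}(x,h)$, which gives the second component of (\ref{infimumzxjmax}). Multiplication by $c_{j}$ is monotone, so $x_{j} \leq f^{\leq}_{ij}$ implies $c_{j} \otimes x_{j} \leq c_{j} \otimes f^{\leq}_{ij}(x,h)$. Adding the untouched terms $\oplus_{j'\neq j} c_{j'} \otimes x_{j'} \oplus c_{h} \otimes h$ with $\oplus$, which is also monotone, yields
\[
c^{\intercal} \otimes x \oplus c_{h} \otimes h \leq f_{z,ij}(x,h).
\]
Expanding with distributivity and associativity of $\oplus,\otimes$ gives the closed form (\ref{fzij2max}).

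\textbf{Step 3: attainment.} The value $x_{j} = f^{\leq}_{ij}(x,h)$ satisfies the constraint with equality, i.e.\ it saturates the inequality. Substituting it into the cost gives exactly $f_{z,ij}(x,h)$. Hence the upper bound is reached, the supremum is a maximum, and it is attained at (\ref{substitxjmax}).

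\textbf{Main obstacle: the top element $+\infty$.} The one delicate point is attainment when $f^{\leq}_{ij}$ takes the value $+\infty$, or when $c_{j}=\dzero$. The convention $\dzero \otimes +\infty = \dzero$ must keep the identity $c_{j}\otimes f^{\leq}_{ij} = $ (corresponding terms of (\ref{fzij2max})) valid. I would check directly that the extended operations on $\overline{\real}_{\dzero}$ remain monotone and distributive. In the case $c_{j}=\dzero$, the supremum of the cost does not depend on $x_{j}$ at all, so saturation is still a maximizer, and $f_{z,ij}$ reduces to the original cost with $x_{j}$ removed, which agrees with (\ref{fzij2max}).
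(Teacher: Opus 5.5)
Your proof is correct and is essentially the paper's argument. The appendix states this theorem without proof, but it is the dual of the proof of Theorem~\ref{theoMinCi}, which rests exactly on the monotonicity of $c_{j}\otimes\cdot$ and of $(x,h)$-linear functions, as in your Step 2. Your explicit attainment step (saturation is feasible because $f^{\leq}_{ij}$ does not involve $x_{j}$, with the other variables fixed) and your check of the conventions for $\pm\infty$ spell out what the paper leaves implicit.
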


\subsection{Classification of the variables and the linear functions of \textsf{MPR}}

    Based on the result of Proposition~\ref{propineqValid},
    $\forall i=1, \ldots,m$, $\forall j=1, \ldots, n$ such that the following condition is
satisfied:

\begin{equation}
  \label{condExistleqmax}
\mbox{$a^{-}_{i,j} \neq \dzero$ and
  $a^{+}_{i,j} < a^{-}_{i,j}$}
\end{equation}

we define the following valid inequality:

\begin{equation}
  \label{ineqIminminmax}
I_{\geq}(x_{j},i):=\{a^{-}_{i,j} \otimes x_{j} \geq a^{+}_{i,.} \otimes w\}.
  \end{equation}
Under condition (\ref{condExistleq}) the result of Proposition~\ref{propineqValid}
$I_{\geq}(x_{j},i) = \{x_{j} \geq f^{\geq}_{ij}(x,h)\}$ (see (\ref{deffij})-(\ref{eqdefrij})).

The hierarchy between the variables of the problem is denoted $\preceq_{\overline{var}}$ and
based on their variation domain such that:

\begin{equation}
  \label{eqOrdreVarmax}
  \{x_{j} \in \x: \mbox{$I_{\leq}(x_{j})\neq \emptyset$ and $I_{\geq}(x_{j}) \neq \emptyset$}\} \preceq_{\overline{var}}
  \{x_{j} \in \x: \mbox{$I_{\leq}(x_{j})\neq \emptyset$ and $I_{\geq}(x_{j}) =\emptyset$}\}.
\end{equation}

We also mentioned that: \\
$\{x_{j} \in \x: \mbox{$I_{\leq}(x_{j})\neq \emptyset$ and $I_{\geq}(x_{j}) =\emptyset$}\}
  \preceq_{\overline{var}}
  \{x_{j} \in \x: \mbox{$I_{\leq}(x_{j}) = \emptyset$ and $I_{\geq}(x_{j}) =\emptyset$}\}$.

We now define the following partition of non null linear functions involved in the \textsf{MPR}.
  
  \begin{defi}
    \label{defhBhUmax}
  A non null $(x,h)$-linear function $f: (x,h) \mapsto \alpha^{\intercal} \otimes x \oplus \beta \otimes h$ is said to be
  $h$-bounded if $\alpha = \boldsymbol{-\infty}$. Otherwise the function $f$ is
  said to be $h$-unbounded.
  \end{defi}

We denote $h\tilde{\B}$ the set of $h$-bounded non null linear functions and we denote $h\tilde{\U}$ the set of
non null $h$-unbounded linear functions. And $(h\tilde{\B}, h\tilde{\U})$ is a partition of the set of all non
null $(x,h)$-linear functions. We have the following interval calculus results.

\bit

\item $\forall f= \alpha^{\intercal} \otimes x \oplus \beta \otimes h \in h\tilde{\U}$, using interval calculus formulae we have:
\begin{subequations}
  \begin{equation}
    \label{eqfRdomainemax}
  f(\tilde{R}_{\tilde{\lambda} \tilde{\mu}}(x,h)) =[\dzero, \oplus_{i} \alpha_{i} \otimes \tilde{\lambda} \oplus \beta \otimes \tilde{\mu}],
  \end{equation}
  and by assumption ((\ref{eqmudominelambdamax}) with $\theta= \oplus_{i} \alpha_{i}$) on $\tilde{\lambda}$ and $\tilde{\mu}$ we have:
  
  \begin{equation}
    \label{eqDomainhBmax}
    \forall \alpha \in \real_{\dzero}^{n} \setminus \{\boldsymbol{-\infty}\}, \forall \beta: \; f(\tilde{R}_{\tilde{\lambda} \tilde{\mu}}(x,h)) =
            [\dzero,  \oplus_{i} \alpha_{i} \otimes \tilde{\lambda}]
    \end{equation}
  Let us stress that this interval does not depend on  $\beta$ and
  by assumption (\ref{eqmudominelambdamax}):
  \begin{equation}
    \label{eqDominclusmax}
   [\dzero, \beta \otimes \tilde{\mu}] \subset [\dzero, \oplus_{i} \alpha_{i} \otimes \tilde{\lambda}].
    \end{equation}
\end{subequations}

  \item  And $\forall f= \beta \otimes h \in h\tilde{\B}$:

    \begin{equation}
    f(\tilde{R}_{\tilde{\lambda} \tilde{\mu}}(x,h)) =[\dzero, \beta \otimes \tilde{\mu}].
    \end{equation}
    \eit

From this above results we have the following
hierarchy between the non null linear functions denoted $\preceq_{\overline{fct}}$ which is based on their
value domain:

\begin{equation}
  \label{eqOrdrefctmax}
\{f: f \in  h\tilde{\B} \} \preceq_{\overline{fct}} \{f: f \in h\tilde{\U} \}.
  \end{equation}

\subsection{Choosing a variable for a new substitution after $k$ substitutions in \textsf{MPR}}
\label{subsubChoixvarmax}

The procedure for choosing a remaining variable to be
substituted is based on the theorems listed below. Recall that we use numerotation convention~\ref{numMatcol}
p. \pageref{numMatcol}.

Because when $\x^{+} \neq \emptyset$ the cost function can increases we have the following result.

\begin{theo}[Maximality and reachability at $\boldsymbol{-\infty}$ of $h \mapsto c_{h} \otimes h$]
  \label{thmchattaignablemax}
  The function $h \mapsto c_{h} \otimes h$ is the
  lower bound of the cost function $(x,h) \mapsto c^{\intercal}_{\x} \otimes x \oplus c_{h} \otimes h$ which is attained
  at $x=\boldsymbol{-\infty}$ iff the following conditions hold:
  \begin{subequations}
    the cost function cannot increase, ie.:
  \begin{equation}
    \label{eqchokmax0}
    \x^{+} = \emptyset,
  \end{equation}
  and the homogenization variable $h$ can take any arbitrary value, ie.:
  \begin{equation}
    \label{eqchokmax}
A_{[|1,m|] h}^{+[k]} \leq A_{[|1,m|] h}^{-[k]}.
    \end{equation}
\end{subequations}
  \end{theo}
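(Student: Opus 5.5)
I will mirror the proof of Theorem~\ref{thmchattaignable}, adapting it to the maximization problem \textsf{MPR}. There the constraints read $A^{+[k]} \otimes w \leq A^{-[k]} \otimes w$ and $h < +\infty$. First I restrict the system to its $[|1,m|] \times (\x \cup \{h\})$ part:
\[
A_{[|1,m|] \x}^{+[k]} \otimes x \oplus A_{[|1,m|] h}^{+[k]} \otimes h \leq A_{[|1,m|] \x}^{-[k]} \otimes x \oplus A_{[|1,m|] h}^{-[k]} \otimes h .
\]
The row $0$ encodes Fourier's trick $z \leq c^{\intercal}_{\x} \otimes x \oplus c_{h} \otimes h$. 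The claim is that the value $c_{h} \otimes h$ is the optimal value, attained at $x = \boldsymbol{-\infty}$, exactly when (\ref{eqchokmax0}) and (\ref{eqchokmax}) both hold.

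For sufficiency I assume both conditions. By (\ref{eqchokmax0}) every remaining variable has $c_{j} = \dzero$, so the cost function reduces to $c_{h} \otimes h$. Hence no choice of $x$ changes the objective, and the bound $z \leq c_{h} \otimes h$ is tight independently of $x$. It remains to check feasibility of $(z, \boldsymbol{-\infty}, h)$. Substituting $x = \boldsymbol{-\infty}$ leaves $A_{[|1,m|] h}^{+[k]} \otimes h \leq A_{[|1,m|] h}^{-[k]} \otimes h$. By (\ref{eqchokmax}) and monotonicity of $\otimes$, this holds for every $h$, in particular for a finite $h$ compatible with (\ref{cond-h-different-infinimax}), e.g. $h = \dun$.

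For necessity I argue by contraposition on each condition. If $\x^{+} \neq \emptyset$, pick $x_{j}$ with $c_{j} > \dzero$. Then the cost $c_{j} \otimes x_{j} \oplus c_{h} \otimes h$ strictly exceeds $c_{h} \otimes h$ for $x_{j}$ large. The point is to use the parametrized domain $\tilde{R}_{\tilde{\lambda}\tilde{\mu}}(x,h)$ together with assumption (\ref{eqmudominelambdamax}): the $h$-unbounded term dominates, by (\ref{eqDomainhBmax}) and (\ref{eqDominclusmax}). So $x = \boldsymbol{-\infty}$ is not a maximizer, and $c_{h} \otimes h$ is not the bound attained there. If instead (\ref{eqchokmax}) fails, some row $i$ has $a^{+[k]}_{i,h} > a^{-[k]}_{i,h}$. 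At $x = \boldsymbol{-\infty}$ that row forces $a^{+[k]}_{i,h} \otimes h \leq a^{-[k]}_{i,h} \otimes h$, which only $h = \dzero$ satisfies. That is the dual of the $h = +\infty$ degeneracy in Theorem~\ref{thmchattaignable}. It gives $z \leq c_{h} \otimes \dzero = \dzero$, so $c_{h} \otimes h$ is not achieved for arbitrary $h$.

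The main obstacle is the necessity of (\ref{eqchokmax0}). The feasibility part is a direct copy of the minimization case. The necessity part, however, requires showing that an increasing direction in $x_{j}$ is actually feasible in the cone. I would handle it through the interval domain $\tilde{R}_{\tilde{\lambda}\tilde{\mu}}$ and the hierarchy $\preceq_{\overline{fct}}$ rather than through explicit feasibility: any $h$-unbounded cost has value domain $[\dzero, \oplus_{i} \alpha_{i} \otimes \tilde{\lambda}]$, which strictly contains $[\dzero, c_{h} \otimes \tilde{\mu}]$. If this is not conclusive, I would restrict the statement to the case where the $x_{j}$ with $c_{j} > \dzero$ is $\max$-unbounded or admits an $I_{\leq}$ inequality whose right-hand side exceeds $c_{h} \otimes h$.
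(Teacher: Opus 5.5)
The paper gives no proof of this theorem (the appendix states that its results are listed without proofs). The proof of Theorem~\ref{thmchattaignable} that you mirror treats only the condition on the $h$-column. Your sufficiency argument is fine, and so is your necessity argument for (\ref{eqchokmax}): if $a^{+[k]}_{i,h}>a^{-[k]}_{i,h}$ then $a^{+[k]}_{i,h}\neq\dzero$, and with $h<+\infty$ row $i$ at $x=\boldsymbol{-\infty}$ forces $h=\dzero$. Sufficiency is also the only direction that Case~2.2.1 of \S~\ref{sublescasMax} relies on.

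The gap is the one you flag yourself, the necessity of (\ref{eqchokmax0}), and it cannot be closed.

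\textbf{Why your argument does not work.} The step ``the cost strictly exceeds $c_{h}\otimes h$ for $x_{j}$ large'' presupposes that large values of $x_{j}$ are feasible. But $\tilde{R}_{\tilde{\lambda}\tilde{\mu}}(x,h)$ is a box that ignores the constraints $A^{+[k]}\otimes w\leq A^{-[k]}\otimes w$. So (\ref{eqDomainhBmax}), (\ref{eqDominclusmax}) and $\preceq_{\overline{fct}}$ only compare ranges of functions over that box; they say nothing about which points lie in the cone.

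\textbf{The implication is false.} Take one remaining variable $x_{1}$ and cost $x_{1}\oplus 5\otimes h$, so $\x^{+}=\{x_{1}\}$. Impose the single constraint $x_{1}\leq h$, i.e. $a^{+}_{1,x_{1}}=\dun$, $a^{-}_{1,h}=\dun$, all other entries $\dzero$. Then (\ref{eqchokmax}) holds, and every feasible point has $x_{1}\leq h$, hence cost $5\otimes h$. This maximal value is attained at $x_{1}=\dzero$ for every $h$, although (\ref{eqchokmax0}) fails. Another instance is the appendix example at step~1 with row~$1$, i.e. $1\otimes h\leq x_{1}\oplus h$, deleted.

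Under the literal reading, where $c_{h}\otimes h$ is merely a lower bound of the cost attained at $x=\boldsymbol{-\infty}$, the property is equivalent to (\ref{eqchokmax}) alone. So (\ref{eqchokmax0}) is not necessary there either.

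\textbf{What can be proved.} Your fallback is the right kind of repair, but it changes the statement. If some $x_{j}\in\x^{+}$ is $\max$-unbounded, then $a^{+[k]}_{i,j}\leq a^{-[k]}_{i,j}$ for every row $i$. Starting from the feasible point $(\boldsymbol{-\infty},h)$, you may then raise $x_{j}$ arbitrarily without leaving the cone, so $x=\boldsymbol{-\infty}$ is not a maximizer. Without such a hypothesis, only the ``if'' direction, together with the necessity of (\ref{eqchokmax}), is provable.
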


Before stating the main Theorem of this appendix we need the following preliminary Lemma.

\begin{lemm}
      \label{lemmInegalitesgeqgeq}
      Let us consider two tropical $(x,h)$-linear functions $f_{1}$ and $f_{2}$, a variable $y$ and the
      two inequalities $I_{\leq}(y,1):=(y \leq f_{1}(x,h))$ and $I_{\leq}(y,2):=(y \leq f_{2}(x,h))$. We assume that
      $(x,h)$ varies in some domain and that 
      the image of the domain by $f_{i}$ is the interval $\sigma_{i}:=[\lambda_{i},+\infty]$, $i =1,2$. 

      W. l. o. g. we can assume that $\lambda_{1} \leq \lambda_{2}$. Then, we have the following result. \\

The maximum interval in the sense of the inclusion denoted $\textsf{Sup}_{\sigma \in \mathcal{J}}
  (\forall y \in \sigma \; \exists (x,h) \;
  (y \leq f_{1}(x,h) \mbox{ and } (y \leq f_{2}(x,h)) \mbox{ is true})$ is equal to
            $\overline{\textsf{Min}}(\sigma_{1}, \sigma_{2})$.

\end{lemm}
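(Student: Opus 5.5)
The plan is to reduce the conjunction to a single scalar bound, exactly as in Lemma~\ref{lemmInegalitesgeq} but with the inequalities reversed. For fixed $(x,h)$ we have $y \leq f_{1}(x,h)$ and $y \leq f_{2}(x,h)$ iff $y \leq g(x,h):=\min(f_{1}(x,h),f_{2}(x,h))$. So a value $y$ is admissible iff $y \leq g(x,h)$ for some $(x,h)$ in the domain. The set of admissible $y$ is therefore the down-set $\{y: y \leq \sup g\}$, with the endpoint included when the supremum is attained. Since the $\sigma \in \mathcal{J}$ are down-sets $[\dzero,u]$, the maximal admissible $\sigma$ is $[\dzero, \sup g]$.

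The second step computes $\sup g$ from the hypotheses on the images. By hypothesis $f_{i}$ maps the domain onto $\sigma_{i}=[\lambda_{i},+\infty]$, so each upper endpoint equals $+\infty$. The key point is that both functions reach their upper endpoints \emph{at the same point}. All $(x,h)$-linear functions are non-decreasing (as used in the proof of Theorem~\ref{theoMinCi}), so the maximal point of the domain maximizes both $f_{1}$ and $f_{2}$ simultaneously. This maximal point is $(\binfip,+\infty)$ in the parametrized box when it contains the top, or otherwise the upper corner of the box. Consequently
\[
\sup g=\min(\sup \sigma_{1},\sup \sigma_{2})=\min(+\infty,+\infty)=+\infty ,
\]
and this supremum is attained. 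The lower endpoints $\lambda_{1}\leq\lambda_{2}$ play no role. They only matter in the $\geq$ version, Lemma~\ref{lemmInegalitesgeq}, where the binding ends are the lower ones.

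The third step identifies this with $\overline{\textsf{Min}}(\sigma_{1},\sigma_{2})$. Following (\ref{eq-intersection=Min-intervalmax}) and Proposition~\ref{propMinIntervalmax}, $\overline{\textsf{Min}}$ of down-sets is the down-set generated by the smaller upper end, i.e.\ the intersection. For $\mathcal{J}$-intervals the only relevant data of $\sigma_{i}$ is its upper end, so $\overline{\textsf{Min}}(\sigma_{1},\sigma_{2})$ is read as $[\dzero,\min(\sup\sigma_{1},\sup\sigma_{2})]$. By the previous step this is exactly the maximal admissible interval $[\dzero,\sup g]$.

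Finally I would check maximality and uniqueness by the same two-line argument as in the proof of (1) in Lemma~\ref{lemmInegalitesgeq}. For $\sigma\in\mathcal{J}$, the condition ``$\forall y\in\sigma$, the conjunction holds for some $(x,h)$'' holds iff $\sigma\subseteq[\dzero,\sup g]$.

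The main obstacle is the bookkeeping in the third step. The $\sigma_{i}$ are given as up-sets, while $\overline{\textsf{Min}}$ and $\mathcal{J}$ are defined for down-sets with $u\in\real_{\dzero}$, and here the relevant endpoint is $+\infty$. I would handle this by working in the completed dioid $\overline{\real}_{\dzero}$ and allowing $u=+\infty$ as the limiting member of $\mathcal{J}$, so that the supremum is attained and the ``maximum'' exists. The simultaneous-attainment argument via monotonicity is the only genuinely non-formal point. It is what makes the conjunction harmless, whereas for general non-monotone $f_{i}$ the $\min$ of suprema could exceed the supremum of the $\min$.
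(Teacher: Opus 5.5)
The gap is in how you reconcile the hypothesis with the conclusion. You take $\sigma_i=[\lambda_i,+\infty]$ literally and obtain $\sup\min(f_1,f_2)=+\infty$. You then make the right-hand side fit by reading $\overline{\textsf{Min}}$ on these up-sets through their upper ends. That gives $\overline{\textsf{Min}}(\sigma_1,\sigma_2)=[\dzero,+\infty]$, with $\lambda_1\le\lambda_2$ ``playing no role''.

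That is not the lemma the paper proves. Its proof states that the condition holds for $\sigma\in\mathcal{J}$ iff $\sigma\subseteq[\dzero,\lambda_1]=\overline{\textsf{Min}}(\sigma_1,\sigma_2)$. This only makes sense if $\sigma_i=[\dzero,\lambda_i]$: the ``$[\lambda_i,+\infty]$'' in the hypothesis is carried over from Lemma~\ref{lemmInegalitesgeq}. The ordering $\lambda_1\le\lambda_2$ is exactly what selects the binding bound. This is also how the lemma is used in Theorem~\ref{theoNTZminmax}, where the images $f_{z,ij}(\tilde{R}_{\tilde{\lambda}\tilde{\mu}}(x,h))$ are $[\dzero,4\otimes\tilde{\lambda}]$, $[\dzero,2\otimes\tilde{\mu}]$, and so on.

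Your own setting should have flagged the slip. The box $\tilde{R}_{\tilde{\lambda}\tilde{\mu}}(x,h)$ contains $\boldsymbol{-\infty}$, so a non-decreasing $(x,h)$-linear function maps it onto $[\dzero,f(\mathrm{top})]$. An image $[\lambda_i,+\infty]$ with $\lambda_i>\dzero$ cannot arise there. As written, your conclusion $[\dzero,+\infty]$ is a degenerate statement that contradicts the paper's $[\dzero,\lambda_1]$ rather than proving it. Your third step is a redefinition of $\overline{\textsf{Min}}$, not an application of the paper's $\overline{\textsf{Min}}$ on $\mathcal{J}$.

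The repair is short, and your mechanism is more careful than the paper's one-line proof. Take $\sigma_i=[\dzero,\lambda_i]$. Your first step says $y$ is admissible iff $y\le\min(f_1,f_2)(x,h)$ for some $(x,h)$. By monotonicity both $f_i$ reach $\lambda_i$ at the top corner of the box. Hence $\sup\min(f_1,f_2)=\min(\lambda_1,\lambda_2)=\lambda_1$ is attained, and the maximal $\sigma$ is $[\dzero,\lambda_1]$.

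The paper simply asserts the ``iff''. Its ``if'' direction needs exactly your simultaneous-attainment argument, and hence an assumption that the phrase ``some domain'' does not supply. For instance, take the domain where either $x_1=\dzero$, $x_2\le\lambda$ or $x_2=\dzero$, $x_1\le\lambda$ (with $h$ arbitrary). The functions $f_1=x_1$ and $f_2=x_2$ both have image $[\dzero,\lambda]$. Yet $\min(f_1,f_2)\equiv\dzero$, so the maximal admissible interval is $[\dzero,\dzero]$.

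So keep your monotonicity argument and drop the up-set reading. State explicitly that the domain is a box with a greatest element, as $\tilde{R}_{\tilde{\lambda}\tilde{\mu}}(x,h)$ is.
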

\proof We just have to remark that $\forall y \in \sigma \; \exists (x,h) \; (y \leq f_{1}(x,h) \mbox{ and } (y \leq f_{2}(x,h)) \mbox{ is true})$
for $\sigma \in \mathcal{J}$ iff
$\sigma \subseteq [\dzero,\lambda_{1}]=\overline{\textsf{Min}}(\sigma_{1} , \sigma_{2})$.

\cqfd \\

\begin{theo}
  \label{theoNTZminmax}
  Let $\I^{\leq}$ be a subset of valid inequalities $I_{\leq}(x_{j},i)$, ie. a set
such that:
\begin{equation}
  \I^{\leq} \subseteq I_{\leq}(\x).
\end{equation}
Where
\begin{equation}
  \label{eqtteslesgeqmax}
I_{\leq}(\x):= \{(i,j): \mbox{ $x_{j} \in \x$ and $I_{\leq}(x_{j},i)$ is valid} \}.
  \end{equation}

\begin{equation}
  \label{implicationthmNTZmax}
\mbox{ $I_{\leq}(x_{j},i)\neq \emptyset$ or equivalently $I_{\leq}(x_{j},i)$ is valid} \Rightarrow z \leq f_{z,ij}(x,h).
  \end{equation}
Recalling that $f_{z,ij}(x,h)$ is defined by (\ref{fzij2max}). \\

Let $\T$ and $\T'$ denoting either $\tilde{\B}$ or $\tilde{\U}$, respectively.
The $2$-tuple $(\T,\T')$ is a function of $\I$ and is defined according
to $\preceq_{fct}$ as follows with priority to the cost function.
First, let us define $\T$ as follows:

  \begin{equation}
    \label{eqdefTleq}
    \T:=\left\{\bar{ll} \tilde{\B} & \mbox{ if $\exists (i,j) \in \I^{\leq}$ s.t. $f_{z,ij} \in h\tilde{\B}$} \\
    \tilde{\U} & \mbox{ otherwise}.
    \ear \right.
    \end{equation}
  
Then, let us define the following set:

\begin{equation}
  \label{mathcalTleq}
  \mathcal{T}(\T,\I^{\leq}):=\{(i,j) \in \I^{\leq}: f_{z,ij} \in h\T \}.
  \end{equation}

Let us define the logical decomposition of the set $\mathcal{T}(\T,\I^{\leq})$ as follows.

     \begin{subequations}
\begin{equation}
  \label{eqdecTkIloggeq0}
  \mathcal{T}^{z}_{log}(\T, \I^{\leq}):= \textsf{AND}_{\{i:(i,j) \in \mathcal{T}(\T, \I^{\leq})\}} \mathcal{T}^{z}_{log}(\T, \I^{\leq})(i),
\end{equation}
with $\forall i \in \{i:(i,j) \in \mathcal{T}(\T, \I^{\leq})\}$:
\begin{equation}
    \label{eqdecTkIloggeq1}
    \mathcal{T}^{z}_{log}(\I, \I^{\leq})(i):= \textsf{AND}_{\{j:(i,j) \in \mathcal{T}(\I, \I^{\leq})\}} (z \leq f_{z,ij})).
      \end{equation}

     \end{subequations}

   Let us define the interval $\tau$ as follows:

   \begin{equation}
     \label{eqtauleq}
\tau= \overline{\textsf{Min}}_{\{(i,j) \in \mathcal{T}(\T,\I^{\leq})\}} f_{z,ij}(\tilde{R}^{\T}_{\tilde{\lambda} \tilde{\mu}}(x,h)).
   \end{equation}

   Then, $\tau$ is the maximal interval such that the following logical equivalence is true.
   \begin{equation}
     \label{equivZtau}
 \mathcal{T}^{z}_{log}(\T, \I^{\leq}) \mbox{ is true } \Leftrightarrow \mbox{ $\tau$ is defined by (\ref{eqtauleq})}.
     \end{equation}

   From $\tau$ the following cases may occur.

   \bit
 \item If $\overline{\textsf{argMin}}(\tau)=\{(i^{*},j^{*})\}$ then the
   set of substituable variables $S_{\tau}^{\leq}$ (which also contains the row
     where the variable appears in the constraints system) is defined by:
   \begin{equation}
          \label{Stauleq}
S_{\tau}^{\leq}:=\overline{\textsf{argMin}}(\tau),
     \end{equation}
   which is of course a singleton.
   
 \item Otherwise $\textsf{n}(\overline{\textsf{argMin}}(\tau)) \geq 2$ then let us define $\T'$ as follows:

   \begin{equation}
  \label{eqdefTT'leq}
  \T':= \left\{\bar{ll} \tilde{\B} & \mbox{ if $\exists (i,j) \in \overline{\textsf{argMin}}(\tau)$ s.t. $f^{\leq}_{ij} \in h\tilde{\B}$} \\
  \tilde{\U} & \mbox{ otherwise.}
  \ear\right.
  \end{equation}
Then, let us define the following set:

\begin{equation}
  \label{mathcalTprimeleq}
  \mathcal{T}'(\T',\overline{\textsf{argMin}}(\tau)):=\{(i,j) \in \overline{\textsf{argMin}}(\tau): f^{\leq}_{ij} \in h\T' \}.
  \end{equation}

Let us define the logical decomposition of the set $\mathcal{T}'(\T',\overline{\textsf{argMin}}(\tau))$ as follows.

     \begin{subequations}
\begin{equation}
  \label{eqdecTkprimeIloggeq0}
  \mathcal{T}'_{log}(\T',\overline{\textsf{argMin}}(\tau)):= \textsf{AND}_{\{i:(i,j) \in \mathcal{T}'(\T',\overline{\textsf{argMin}}(\tau))\}} \mathcal{T}'_{log}(\T',\overline{\textsf{argMin}}(\tau))(i),
\end{equation}
with $\forall i \in \{i:(i,j) \in (\T',\overline{\textsf{argMin}}(\tau))\}$:
\begin{equation}
    \label{eqdecTkprimeIloggeq1}
    \mathcal{T}'_{log}(\T',\overline{\textsf{argMin}}(\tau))(i):= \textsf{AND}_{\{j:(i,j) \in \mathcal{T}'(\T',\overline{\textsf{argMin}}(\tau))\}} (x_{j} \leq f^{\leq}_{ij})).
      \end{equation}

     \end{subequations}

   Then, let us defined the interval $\tau'$ as follows:

   \begin{equation}
    \label{eqtauprime}
    \tau'= \overline{\textsf{Min}}_{\{(i,j) \in  \mathcal{T}'(\T',\textsf{argMin}(\tau))\}}
    f^{\leq}_{ij}( \tilde{R}^{\T'}_{\tilde{\lambda} \tilde{\mu}}(x,h)).
   \end{equation}

Then, $\tau'$ is the maximal interval such that the following equivalence is true.

   \begin{equation}
     \label{equivXtautauprime}
     \mbox{$\mathcal{T}'_{log}(\T',\overline{\textsf{argMin}}(\tau))$ is true } \Leftrightarrow
     \mbox{ $\tau'$ is defined by
       (\ref{eqtauprime})}.
   \end{equation}
   
   If $S_{\tau'}^{\leq}$ denotes the set of subsituable variables (which also contains the row
     where the variable appears in the constraints system) then we have:

   \begin{equation}
     \label{Stauprimeleq}
S_{\tau'}^{\leq}:= \overline{\textsf{argMin}}(\tau').
     \end{equation}

   \eit

\end{theo}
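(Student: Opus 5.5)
The plan is to mirror the proof of Theorem~\ref{theoNTZmin}, replacing the family $\mathcal{I}$ by $\mathcal{J}$, $\textsf{Min}$ by $\overline{\textsf{Min}}$, and the disjunctions by conjunctions. In the \textsf{MPR} every logical block is a pure conjunction, by (\ref{ANDa+a-ineq}). So the only interval operation needed is intersection, i.e. equality (\ref{eq-intersection=Min-intervalmax}).

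First, implication (\ref{implicationthmNTZmax}) follows from Theorem~\ref{theoMinCimax}. If $I_{\leq}(x_{j},i)$ is valid, then $x_{j}\leq f^{\leq}_{ij}(x,h)$. Since all tropical linear functions are non-decreasing, multiplying by $c_{j}$ and taking $\oplus$ with the other terms of the cost gives $c^{\intercal}\otimes x\oplus c_{h}\otimes h\leq f_{z,ij}(x,h)$. Combining this with $z\leq c^{\intercal}\otimes x\oplus c_{h}\otimes h$ yields $z\leq f_{z,ij}(x,h)$.

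Second, I compute the value domains on $\tilde{R}^{\T}_{\tilde{\lambda}\tilde{\mu}}(x,h)$. Each $f_{z,ij}$ with $(i,j)\in\mathcal{T}(\T,\I^{\leq})$ maps this box onto an interval of $\mathcal{J}$. For $h\tilde{\U}$ functions this follows from (\ref{eqfRdomainemax})--(\ref{eqDomainhBmax}); for $h\tilde{\B}$ functions the interval is $[\dzero,\beta\otimes\tilde{\mu}]$. Restricting to one class $h\T$, which is the role of (\ref{eqdefTleq}) and (\ref{mathcalTleq}), ensures that all intervals are parametrized by the same quantity ($\tilde{\lambda}$ or $\tilde{\mu}$). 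They are therefore totally ordered by Proposition~\ref{propMinIntervalmax}.

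Third, I apply Lemma~\ref{lemmInegalitesgeqgeq}, stated for intervals in $\mathcal{J}$, i.e. $[\dzero,\lambda_{i}]$. Fixing $i$, the inner conjunction (\ref{eqdecTkIloggeq1}) is true for all $z\in\sigma$ iff $\sigma\subseteq\cap_{j}f_{z,ij}(\tilde{R})$. By induction on the number of conjuncts, using associativity of $\overline{\textsf{Min}}$, this intersection equals $\overline{\textsf{Min}}_{j}f_{z,ij}(\tilde{R})$. The outer conjunction over $i$ is again an intersection. Since a conjunction of conjunctions is a single conjunction over all pairs $(i,j)\in\mathcal{T}(\T,\I^{\leq})$, the maximal $\sigma$ is $\overline{\textsf{Min}}_{(i,j)}f_{z,ij}(\tilde{R})=\tau$. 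This gives (\ref{equivZtau}), and the $\overline{\textsf{argMin}}$ gives $S^{\leq}_{\tau}$.

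For the tie case $\textsf{n}(\overline{\textsf{argMin}}(\tau))\geq 2$, I argue as in Theorem~\ref{theoNTZmin}: the tied variables play interchangeable roles, so each $x_{j}\leq f^{\leq}_{ij}$ is replaced by a generic $y\leq f^{\leq}_{ij}$. I then rerun the same intersection argument on $\mathcal{T}'(\T',\overline{\textsf{argMin}}(\tau))$ with the domain $\tilde{R}^{\T'}$. This yields $\tau'$ in (\ref{eqtauprime}), equivalence (\ref{equivXtautauprime}), and $S^{\leq}_{\tau'}$.

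The main obstacle is the justification that the intervals within one class are comparable and that "$\forall z\in\sigma\,\exists(x,h)$" really reduces to interval intersection. The existential witness $(x,h)$ must be common to all conjuncts. I would handle this by taking the witness at the top of the box, $x=\tilde{\lambda}$ componentwise and $h=\tilde{\mu}$. All the $f$ are monotone, so each attains its supremum there simultaneously, and a single witness serves every conjunct. The generic-variable exchange in the tie case is informal in the paper too; I would accept it at the same level of rigor.
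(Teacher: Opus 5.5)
Your proposal is correct and follows essentially the same route as the paper. You flatten the conjunction of conjunctions into a single \textsf{AND} over all pairs $(i,j)$ by associativity, obtain $\tau$ (and, in the tie case, $\tau'$ after the same generic-variable exchange) by iterating Lemma~\ref{lemmInegalitesgeqgeq} through associativity of $\overline{\textsf{Min}}$, and you additionally fill in two points the paper leaves implicit: deriving (\ref{implicationthmNTZmax}) from Theorem~\ref{theoMinCimax} and monotonicity, and noting that the top corner $(\tilde{\lambda},\ldots,\tilde{\lambda},\tilde{\mu})$ of the box is a common witness, so the conjunction really does reduce to intersecting the images.
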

\proof To prove the equivalence (\ref{equivZtau}) we just have to remark that by associativity of
$\textsf{AND}$:

\[
\bar{lll}
\mathcal{T}^{z}_{log}(\T, \I^{\leq}) & = & \textsf{AND}_{\{i:(i,j) \in \mathcal{T}(\T, \I^{\leq})\}} \mathcal{T}^{z}_{log}(\T, \I^{\leq})(i) \\
\mbox{} & = & \textsf{AND}_{\{i:(i,j) \in \mathcal{T}(\T, \I^{\leq})\}}\textsf{AND}_{\{j:(i,j) \in \mathcal{T}(\I, \I^{\leq})\}} (z \leq f_{z,ij})) \\
\mbox{} & = & \textsf{AND}_{\{(i,j) \in \mathcal{T}(\T,\I^{\leq})\}} (z \leq f_{z,ij})).
\ear
\]
The expression of the interval $\tau$ (see (\ref{eqtauleq})) is obtained by an obvious
generalization of the result of the Lemma~\ref{lemmInegalitesgeqgeq} because $\overline{\textsf{Min}}$
is associative. \\

Let us prove the logical equivalence (\ref{equivXtautauprime}) when $\textsf{n}(\textsf{argMin}(\tau)) \geq 2$. \\
In this case $\forall i \in \{i: (i,j) \in \mathcal{T}'(\T',\textsf{argMin}(\tau))\}$
all the variables $x_{j}$, $j \in \{j: (i,j) \in \mathcal{T}'(\T',\textsf{argMin}(\tau))\}$
can be exchanged. It means that any inequality $(x_{j} \leq f^{\leq}_{ij}))$ can be replaced by
a generic inequality $(y \leq f^{\leq}_{ij}))$ where $y$ is a generic variable which stands
for any $x_{j}$, $j \in \{j: (i,j) \in \mathcal{T}'(\T',\textsf{argMin}(\tau))\}$.

Now, we just have to remark that by associativity of
$\textsf{AND}$:

\[
\bar{lll}
\mathcal{T}'_{log}(\T',\overline{\textsf{argMin}}(\tau)) & = & \textsf{AND}_{\{i:(i,j) \in \mathcal{T}'(\T',\overline{\textsf{argMin}}(\tau))\}} \mathcal{T}'_{log}(\T',\overline{\textsf{argMin}}(\tau))(i) \\
\mbox{} & = & \textsf{AND}_{\{i:(i,j) \in \mathcal{T}'(\T',\overline{\textsf{argMin}}(\tau))\}} [\textsf{AND}_{\{j:(i,j) \in \mathcal{T}'(\T',\overline{\textsf{argMin}}(\tau))\}} (y \leq f^{\leq}_{ij}) ] \\
\mbox{ } & = & \textsf{AND}_{\{(i,j) \in \mathcal{T}'(\T',\overline{\textsf{argMin}}(\tau))\}} (y \leq f^{\leq}_{ij}).
\ear
\]
The expression of the interval $\tau'$ (see (\ref{eqtauprime})) is obtained by an obvious
generalization of the result of the Lemma~\ref{lemmInegalitesgeqgeq} because $\overline{\textsf{Min}}$
is associative. \\

And the proof is now achieved. \\

\cqfd

\subsection{The substitution procedure at step $k$}
\label{sublescasMax}

Assuming that the substitution process is at step $k \neq n$, we are now in position to indicate
how to choose a remaining variable to be substituted. The choice also depends on the
following different cases which are listed hereafter. \\

\noi
\textsc{Case $0$}: $\max(z)= \dzero$ or no maximum. \textsc{Case $0.1$}: $\x = \emptyset$ and the condition (\ref{eqchokmax}) of
Theorem~\ref{thmchattaignablemax} does not holds. $\x=\emptyset$ means that
$k=n$ (no more variable to be substituted) and $\textsf{cost}^{[n]}(x,h)= c_{h}^{[n]} \otimes h$. The condition (\ref{eqchokmax}) of
Theorem~\ref{thmchattaignablemax} does not holds means $A_{[|1,m|] h}^{+[n]} \nleq
A_{[|1,m|] h}^{-[n]}$ then $h=\dzero$ is the only possible value for $h$ and by backward substitution we have
$z=\dzero$, $x=\boldsymbol{-\infty}$. \\

\noi 
\textsc{Case $0.2$}:
$\x \neq \emptyset$ and $I_{\leq}(\x)=\emptyset$ and $I_{\geq}(\x)=\emptyset$ and the conditions
(\ref{eqchokmax0})-(\ref{eqchokmax}) of
Theorem~\ref{thmchattaignablemax} do not hold. The substitution process stops because no
valid inequalities can be generated. No maximum. \\

\noi
\textsc{Case $1$}: {\bf switching case}, cf. \S~\ref{subswitch}. $\x^{+}=\emptyset$ and $\textsf{cost}^{[k]}(x,h)= c_{h}^{[k]} \otimes h$ and the
condition (\ref{eqchokmax}) of Theorem~\ref{thmchattaignablemax} does not holds and $\x^{\dzero} \neq \emptyset$ and
$I_{\leq}(\x^{\dzero}) = \emptyset$ and $I_{\geq}(\x^{\dzero}) \neq \emptyset$. The substitution must switch to
the following minimizing problem:
\begin{subequations}
  \begin{equation}
\textsf{pmr}(A,b,C,d,c,c_{h},z,x,h,(m,n)),
    \end{equation}
  where the matrices $A$ and $C$ are defined by:
 \begin{equation}
    A:=A^{-[k]}_{[|1,m|][|1,n|]}, \; C:=A^{+[k]}_{[|1,m|][|1,n|]}. 
 \end{equation}
The vectors $b$ and $d$ are defined by:
 \begin{equation}
   b:= A^{-[k]}_{[|1,m|]h}, d:= A^{+[k]}_{[|1,m|]h}.
       \end{equation}
 The initial cost function of the \textsf{MPR} is characterized by the vector
 $c$ and the constant $c_{h}$ such that:
 \begin{equation}
    c:=\boldsymbol{-\infty}, \; c_{h}:=c_{h}^{[k]}. 
    \end{equation}
 And the variables of the problem are:
 \begin{equation}
    z:=z , \; x \leftarrow x^{\dzero}, \; h:=h.
 \end{equation}
Recalling that the notation $x \leftarrow x^{\dzero}$ means that we store in the $n$-dimensional vector
 $x$ the remaining variables of the set $\x^{\dzero}$. The other components (already substituted
 variables) are set to $\dzero$. \\

 And the dimensions $(m,n)$ of the \textsf{mpr} are obviously the same
 as the ones of the \textsf{MPR}.
  \end{subequations}

\noi
\textsc{Case $2$}: Not \textsc{Case $0$} and not \textsc{Case $1$}. The first case is \textsc{Case $2.1$}:
$\x=\emptyset$. This means that
$k=n$ (no more variable to be substituted) and $\textsf{cost}^{[n]}(x,h)= c_{h}^{[n]} \otimes h$. And if $A_{[|1,m|] h}^{+[n]} \leq
A_{[|1,m|] h}^{-[n]}$ then $h$ can take all possible values in particular we can take $h=\dun$ and the $\textsf{MPR}$ succeeds
with maximum $z=c_{h}^{[n]}$. Otherwise $h=\dzero$ is the only possible value for $h$ and $z=\dzero$. \\

The other case is \textsc{Case $2.2$}: $\x \neq \emptyset$ and $\exists x_{j} \in \x$:
$I_{\leq}(x_{j}) \neq \emptyset$. Let us distinguish the following subcases which are based on the form
of the cost function:
\begin{subequations}
\begin{equation}
  \textsf{cost}^{[k]}(x,h)= c^{\intercal}_{\x^{+}} \otimes x^{+} \oplus c^{\intercal}_{\x^{\dzero}} \otimes x^{\dzero} \oplus c_{h} \otimes h,
  \end{equation}
where:
\begin{equation}
  \label{eqpartitiondexmax}
  \mbox{$\x^{+}:=\{x_{j} \in \x: \; c_{j} > \dzero\}$ and $\x^{\dzero}:=\{x_{j} \in \x: \; c_{j} = \dzero\}$}.
  \end{equation}
And the vectors of remaining $x^{+}$ and $x^{\dzero}$ are defined by:
\begin{equation}
  \label{eqpartitiondex2max}
x^{+}:=(x_{j})_{\{j: x_{j} \in \x^{+}\}} \mbox{ and } x^{\dzero}:=(x_{j})_{\{j: x_{j} \in \x^{\dzero}\}}.
 \end{equation}
\end{subequations}

\noi
\textsc{Case $2.2.1$}: the conditions (\ref{eqchokmax0})-(\ref{eqchokmax}) of Theorem~\ref{thmchattaignablemax} hold. Then, $z=c_{h} \otimes h$ is
the maximum for the $\textsf{MPR}$. And the vector of the remaining variables $x$ is set to $\boldsymbol{-\infty}$. \\

\noi
\textsc{Case $2.2.2$}: otherwise, the conditions (\ref{eqchokmax0})-(\ref{eqchokmax}) of Theorem~\ref{thmchattaignablemax}
do not hold. \\

\noi
Define the set of dominating variables (recall): 

\begin{equation}
  \label{eqdomVarmax}
D:=\{x_{j} \in \x: \; I_{\geq}(x_{j}) \neq \emptyset \mbox{ and } I_{\leq}(x_{j}) \neq \emptyset \},
  \end{equation}

and the set of inequalities:

\begin{equation}
  \label{eqdefIDmax}
  \I^{\leq}:=\left\{\bar{ll} I_{\leq}(\x \cap D) & \mbox{ if $D \neq \emptyset$} \\
  I_{\leq}(\x), \mbox{ defined by (\ref{eqtteslesgeqmax})}               & \mbox{ otherwise.}
  \ear \right.
\end{equation}
Where: $I_{\leq}(\x \cap D):= \{(i,j): \mbox{ $x_{j} \in \x \cap D$ and $I_{\leq}(x_{j},i)$ is valid} \}$. \\

\noi
We apply Theorem~\ref{theoNTZminmax} with the set of valid inequalities $I^{\leq}$ defined by (\ref{eqdefIDmax})

\noi
We get a $2$-tuple $(i^{*},j^{*})$ for the substitution
of $x_{j^{*}}=f^{\leq}_{i^{*}j^{*}}(x,h)$. \\

\noi
We apply results of \S~\ref{newcharactmax} to compute the new set
of equalities $\mathcal{L}^{[k+1]}$, the new cost function $\text{cost}^{[k+1]}(x,h)$ and the new cone
 $\mathcal{C}(A^{+},A^{-})^{[k+1]}$.

\subsection{The new characteristic elements of the \textsf{MPR} after substitution}
\label{newcharactmax}

The new set of equalities is defined as follows. If conditions of
  Theorem~\ref{thmchattaignablemax} are verified then set $k+1=n$ and
  $\mathcal{L}^{[n]}:= \mathcal{L}^{[k]} \cup_{x_{j} \in \x}\{x_{j}=
  \dzero\} \cup \{z=c_{h} \otimes h\}$. Then, $\textsf{MPR}$ {\bf
    stops} and {\bf we solve} the linear system $\mathcal{L}^{[n]}$
  and express all the $x_{j}$'s as function of the homogenization
  variable $h$. Otherwise, we define $\mathcal{L}^{[k+1]}$ as:
  $\mathcal{L}^{[k+1]}:= \mathcal{L}^{[k]} \cup
   \{x_{j^{*}}=f^{\leq}_{i^{*}j^{*}}(x,h)\}$ and we apply
  Theorem~\ref{theoMinCimax}. And based on the
  previous cases the new cost function is defined as:

  \begin{equation}
  \textsf{cost}^{[k+1]}(x,h):= f_{z,i^{*}j^{*}}(x,h).
  \end{equation}
  
And in all cases the new
cone $\mathcal{C}(A^{+}, A^{-})^{[k+1]}$ associated with $\textsf{MPR}$ is deduced from the
following set of inequalities:

  \begin{subequations}
    \begin{equation}
      \bar{l}
      z \leq \textsf{cost}^{[k+1]}(x,h), \\
      \forall i \in [|1,m|]: \; a^{+[k]}_{i,.} \otimes \ell \leq  a^{-[k]}_{i,.} \otimes \ell .
      \ear
 \end{equation}
    and $\ell$ is the $n+2$-dimensional column vector which has the same components
    as the vector $w^{[k]}$ except its $j^{*}$th component wich is:
    \begin{equation}
\ell_{j^{*}}:= f^{\leq}_{i^{*}j^{*}}(x,h).
      \end{equation}
  \end{subequations}

  Let us define the following $(n+2) \times (n+2)$ {\em transition matrix} $T^{k \rightarrow k+1}$ by:

  \begin{equation}
    \forall j: t^{k \rightarrow k+1}_{j,.}:= \left\{ \bar{lc} e^{\intercal}_{j}  \mbox{ if $j \neq j^{*}$} \\
                                                (\dzero, v^{\intercal}_{i^{*}j^{*}}, r_{i^{*}j^{*}}) & \mbox{ if $j=j^{*}$.}
    \ear \right.
  \end{equation}
  Where $e^{\intercal}_{j}$ denotes the $j$th $n+2$-dimensional row vector of the
  $(n+2) \times (n+2)$-identity matrix $\bI_{n+2}$.

  The new matrices $(A^{+}, A^{-})^{[k+1]}$ are defined by:
  \begin{equation}
(A^{+}, A^{-})^{[k+1]}:= \overline{\textsf{setrowtozero}}(A^{+[k]} \otimes T^{k \rightarrow k+1}, A^{-[k]} \otimes T^{k \rightarrow k+1})
    \end{equation}
recalling that function $\overline{\textsf{setrowtozero}}$ is defined by (\ref{ligneazeromax}).

And the new vector $w^{[k+1]}$ is defined by:
\begin{equation}
w^{[k+1]}:= \left(\bar{c}
     z \\
     \vdots \\
     \dzero \\
     \vdots \\
     h
     \ear \right) \; \bar{c}
     \mbox{} \\
     \mbox{} \\
     \leftarrow \; j^{*} \\
     \mbox{} \\
     \mbox{}
     \ear 
  \end{equation}
Finally, the new set of the remaining variables at step $k+1$ is defined by:
\begin{equation}
\x := \x \setminus \{x_{j^{*}}\}.
  \end{equation}
  
\subsection{A numerical example}
\label{secExNummax}

In this section we illustrate our strongly polynomial method on a numerical
example borrowed from the litterature which are solved using pseudo-polynomial method in \cite{kn:GKS012}.
And we apply substitution on \cite{kn:GKS012}, Example $1$ p. 1458 which is:

\[
\max\{z=1 \otimes x_{1} \oplus 3 \otimes x_{2}, \; x=(x_{1}, x_{2})^{\intercal} \in \mathcal{P}(A,b,C,d) \cup \{\boldsymbol{\pm \infty}\}\},
\]
where:
\[
A:= \left(\bar{cc} \dzero & 1^{\otimes (-1)} \\
2^{\otimes (-1)} & 2^{\otimes (-1)} \\
1^{\otimes (-1)} & \dzero \\
\dun & \dzero
\ear\right), \; b:= \boldsymbol{-\infty}, \;
C:= \left(\bar{cc} \dun & \dzero \\
\dzero & \dzero  \\
\dzero & \dun \\
\dzero & 2
\ear\right), \; d:= \left(\bar{c} \dun \\ \dun \\ \dun \\ \dun \ear \right).
\]
With the following change of notations $c \leftrightarrow b$ and $B \leftrightarrow C$.

The \textsf{MPR} is defined on the homogenized cone $\mathcal{C}(A,b,C,d)$ by adding the homogenization variable $h$.

At step $0$ of the substitution the cone associated with the above constraints is
denoted $\mathcal{C}(A^{+}, A^{-})^{[0]}$ and is characterized by the following system of inequalities:

\begin{subequations}
  \begin{equation}
    \label{eqA0max}
A^{+[0]} \otimes w^{[0]} \leq A^{-[0]} \otimes w^{[0]},
    \end{equation}
  with $w^{[0]}=(z,x_{1},x_{2},h)^{\intercal} \in \overline{\real}^{4}_{\dzero}$ and the matrices $(A^{+}, A^{-})^{[0]}$ are defined
  by:

  \begin{equation}
    \label{eqA00max}
    A^{+[0]}:= \left(\bar{cccc}
\dun & \dzero & \dzero & \dzero \\
\dzero & \dzero & 1^{\otimes (-1)} & \dzero \\
\dzero & 2^{\otimes (-1)} & 2^{\otimes (-1)} & \dzero \\
\dzero &  1^{\otimes (-1)} & \dzero & \dzero \\
\dzero &  \dun & \dzero & \dzero
\ear \right), \;
A^{-[0]}:= \left(\bar{cccc}
\dzero & 1 & 3 & \dzero \\
\dzero & \dun & \dzero & \dun \\
\dzero & \dzero & \dzero &  \dun \\
\dzero & \dzero & \dun &  \dun \\
\dzero & \dzero & 2 &  \dun 
\ear \right).
    \end{equation}

\end{subequations}

And the Fourier's trick for this example is:
\begin{subequations}
\begin{equation}
\max(z)
  \end{equation}

\begin{equation}
z \leq \textsf{cost}^{[0]}(x,h)
  \end{equation}

\begin{equation}
\textsf{cost}^{[0]}(x,h)= 1 \otimes x_{1} \oplus 3 \otimes x_{2} \oplus \dzero \otimes h.
  \end{equation}  
  \end{subequations}

Here: $\x=\{x_{1}, x_{2}\}$. It is easy to check that $\forall x_{j} \in \x: I_{\leq}(x_{j}) \cup I_{\geq}(x_{j}) \neq \emptyset$. So that
the \textsf{MPR} is bounded. We have $\x^{+}=\{x_{1}, x_{2}\}$ and $\x^{\dzero}=\emptyset$. The set of dominating variables is
$D=\{x_{1}, x_{2}\}$ (see (\ref{eqdomVarmax})). The conditions (\ref{eqchokmax0})-(\ref{eqchokmax}) of Theorem~\ref{thmchattaignablemax} are not verified. \\

\noi
\textsc{Case $2.2.2$}, \S~\ref{sublescasMax} applies. We have (see (\ref{eqdefIDmax})):
$\I^{\leq}= I_{\leq}(x_{1}) \cup I_{\leq}(x_{2})=\{(1,2), (2,1), (2,2), (3,1), (4,1) \}$. \\

\noi
We apply Theorem~\ref{theoNTZminmax} with $\I^{\leq}=\{(1,2), (2,1), (2,2), (3,1), (4,1) \}$. \\

From all the possibilities listed in the following array:

\[
\bar{lllllll}
 \left( \bar{c} z \\ x_{2}
\ear \right)_{1} & \leq & \left( \bar{c}  4 \otimes x_{1} \oplus 4 \otimes h\\
1 \otimes x_{1} \oplus 1 \otimes h \ear \right) & :: & \left(\bar{c}
\mbox{$[\dzero, 4 \otimes \tilde{\lambda}]$} \\ \mbox{$[\dzero, 1 \otimes \tilde{\lambda}]$} \ear \right) & :: & \left(\bar{c}
h\tilde{\U} \\  h\tilde{\U} \ear \right) \\
 \left( \bar{c} z \\ x_{1}
\ear \right)_{2} & \leq & \left( \bar{c}  3 \otimes x_{2} \oplus 3 \otimes h\\
2 \otimes h \ear \right) & :: & \left(\bar{c}
\mbox{$[\dzero, 3 \otimes \tilde{\lambda}]$} \\ \mbox{$[\dzero, 2 \otimes \tilde{\mu}]$} \ear \right) & :: & \left(\bar{c}
h\tilde{\U} \\  h\tilde{\B} \ear \right) \\
 \left( \bar{c} z \\ x_{2}
\ear \right)_{2} & \leq & \left( \bar{c}  1 \otimes x_{2} \oplus 5 \otimes h\\
2 \otimes h \ear \right) & :: & \left(\bar{c}
\mbox{$[\dzero, 1 \otimes \tilde{\lambda}]$} \\ \mbox{$[\dzero, 2 \otimes \tilde{\mu}]$} \ear \right) & :: & \left(\bar{c}
h\tilde{\U} \\  h\tilde{\B} \ear \right) \\
 \left( \bar{c} z \\ x_{1}
\ear \right)_{3} & \leq & \left( \bar{c}  3 \otimes x_{2} \oplus 2 \otimes h\\
1 \otimes x_{2} \oplus 1 \otimes h \ear \right) & :: & \left(\bar{c}
\mbox{$[\dzero, 3 \otimes \tilde{\lambda}]$} \\ \mbox{$[\dzero, 1 \otimes \tilde{\lambda}]$} \ear \right) & :: & \left(\bar{c}
h\tilde{\U} \\  h\tilde{\U} \ear \right) \\
 \left( \bar{c} z \\ x_{1}
\ear \right)_{4} & \leq & \left( \bar{c}  3 \otimes x_{2} \oplus 1 \otimes h\\
2 \otimes x_{2} \oplus  h \ear \right) & :: & \left(\bar{c}
\mbox{$[\dzero, 3 \otimes \tilde{\lambda}]$} \\ \mbox{$[\dzero, 2 \otimes \tilde{\lambda}]$} \ear \right) & :: & \left(\bar{c}
h\tilde{\U} \\  h\tilde{\U} \ear \right).
\ear
\]
we have $\T=\tilde{\U}$ (see (\ref{eqdefTleq}),  $\mathcal{T}(\tilde{\U},\I^{\leq})= \{(1,2), (2,1), (2,2), (3,1), (4,1) \}$
(see (\ref{mathcalTleq}), $\overline{\textsf{argMin}}(\tau)=\{(2,2)\}$ with $\tau=[\dzero, 1 \otimes \tilde{\lambda}]$
defined by (\ref{eqtauleq}). The set $\overline{\textsf{argMin}}(\tau)$ is a singleton thus we have: \\
$(i^{*},j^{*})=(2,2)$ and $x_{2}=2 \otimes h$. \\

Applying the update procedure of \S~\ref{newcharactmax} the new characteristic elements are listed below. \\

The set of linear equalities is defined by:

\begin{equation}
\mathcal{L}^{[1]}=\{x_{2}=2 \otimes h\}.
\end{equation}

The new cost function is:
\begin{equation}
\textsf{cost}^{[1]}(x,h):=f_{z,22}(x,h)= 1 \otimes x_{1} \oplus 5 \otimes h.
  \end{equation}

The $4 \times 4$-transition matrix is defined by:

\begin{equation}
  T^{0 \rightarrow 1}:= \left(\bar{cccc}
  \dun & \dzero & \dzero & \dzero \\
  \dzero & \dun & \dzero & \dzero \\
  \dzero & \dzero & \dzero & 2 \\
  \dzero & \dzero & \dzero & \dun
  \ear \right). 
  \end{equation}

Then, we compute the following matrices $A^{+}:=A^{+[0]} \otimes T^{0
  \rightarrow 1}$ and $A^{-}:=A^{-[0]} \otimes T^{0 \rightarrow 1}$
and we obtain:

\[
A^{+}:= \left(\bar{cccc}
\dun & \dzero & \dzero & \dzero \\
\dzero & \dzero & \dzero & 1 \\
\dzero &  2^{\otimes (-1)} &   \dzero & \dun\\
\dzero &  1^{\otimes (-1)}  & \dzero & \dzero\\
\dzero &  \dun & \dzero & \dun
\ear \right), \;
A^{-}:= \left(\bar{cccc}
\dzero & 1 & \dzero & 5 \\
\dzero & \dun & \dzero &  \dun \\
\dzero & \dzero & \dzero &  \dun \\
\dzero & \dzero & \dzero &  2 \\
\dzero & \dzero & \dzero &  4
\ear \right).
\]
The new cone $\mathcal{C}(A^{+}, A^{-})^{[1]}$ is defined by:
\begin{subequations}
  \begin{equation}
    \label{eqA1max}
A^{+[1]} \otimes w^{[1]} \leq A^{-[1]} \otimes w^{[1]},
    \end{equation}
with $w^{[1]}= (z , x_{1} , \dzero , h)^{\intercal} \in \overline{\real}_{\dzero}^{4}$ and the matrices $(A^{\; +}, A^{ \; -})^{[1]}$
are defined as a result of $\overline{\textsf{setrowtozero}}(A^{+}, A^{-})$:

\begin{equation}
  \label{eqA11max}
A^{+[1]}:= \left(\bar{cccc}
\dun & \dzero & \dzero & \dzero \\
\dzero & \dzero & \dzero & 1 \\
\dzero &  2^{\otimes (-1)} &   \dzero & \dun\\
\dzero &  1^{\otimes (-1)}  & \dzero & \dzero\\
\dzero &  \dun & \dzero & \dun
\ear \right), \;
A^{-[1]}:= \left(\bar{cccc}
\dzero & 1 & \dzero & 5 \\
\dzero & \dun & \dzero &  \dun \\
\dzero & \dzero & \dzero &  \dun \\
\dzero & \dzero & \dzero &  2 \\
\dzero & \dzero & \dzero &  4
\ear \right).
\end{equation}

\end{subequations}

The set $\x=\{x_{1}\} \neq \emptyset$. It is easy to see that the \textsf{MPR} problem is bounded. We have
$\x^{+}=\{x_{1}\}$ and $\x^{\dzero}= \emptyset$. The set of dominating variables is $D:=\{x_{1}\}$ (see (\ref{eqdomVarmax})) .
The conditions (\ref{eqchokmax0})-(\ref{eqchokmax}) of Theorem~\ref{thmchattaignablemax} are not verified. \\

\textsc{Case $2.2.2$}, \S~\ref{sublescasMax} applies. We have (see (\ref{eqdefIDmax})):
$\I^{\leq}= I_{\leq}(x_{1})=\{(2,1), (3,1), (4,1) \}$. \\

\noi
We apply Theorem~\ref{theoNTZminmax} with $\I^{\leq}=\{(2,1), (3,1), (4,1) \}$. \\

From all the possible inequalities listed in the following array:

\[
\bar{lllllll}
\left( \bar{c} z \\ x_{1}
\ear \right)_{2} & \leq & \left( \bar{c}  5 \otimes h\\
2 \otimes h \ear \right) & :: & \left(\bar{c}
\mbox{$[\dzero, 5 \otimes \tilde{\mu}]$} \\ \mbox{$[\dzero, 2 \otimes \tilde{\mu}]$} \ear \right) & :: & \left(\bar{c}
h\tilde{\B} \\  h\tilde{\B} \ear \right) \\
\left( \bar{c} z \\ x_{1}
\ear \right)_{3} & \leq & \left( \bar{c}  5 \otimes h\\
3 \otimes h \ear \right) & :: & \left(\bar{c}
\mbox{$[\dzero, 5 \otimes \tilde{\mu}]$} \\ \mbox{$[\dzero, 3 \otimes \tilde{\mu}]$} \ear \right) & :: & \left(\bar{c}
h\tilde{\B} \\  h\tilde{\B} \ear \right)\\
\left( \bar{c} z \\ x_{1}
\ear \right)_{4} & \leq & \left( \bar{c}  5 \otimes h\\
4 \otimes h \ear \right) & :: & \left(\bar{c}
\mbox{$[\dzero, 5 \otimes \tilde{\mu}]$} \\ \mbox{$[\dzero, 4 \otimes \tilde{\mu}]$} \ear \right) & :: & \left(\bar{c}
h\tilde{\B} \\  h\tilde{\B} \ear \right).
\ear
\]
we have $\T=\tilde{\B}$ (see (\ref{eqdefTleq})), $\mathcal{T}(\tilde{\B},\I^{\leq})=\{(2,1), (3,1), (4,1) \}$
(see (\ref{mathcalTleq})), $\overline{\textsf{argMin}}(\tau)=\{(2,1), (3,1), (4,1)\}$ with
$\tau=[\dzero, 5 \otimes \tilde{\mu}]$ defined by (\ref{eqtauleq}). The set $\overline{\textsf{argMin}}(\tau)$ is
not a singleton so we have $\T' = \tilde{\B}$ (see (\ref{eqdefTT'leq})), $ \mathcal{T}'(\tilde{\B},
\overline{\textsf{argMin}}(\tau))=\{(2,1), (3,1), (4,1) \}$ (see (\ref{mathcalTprimeleq})),
$\overline{\textsf{argMin}}(\tau')=\{(2,1)\}$ with $\tau'= [\dzero, 2 \otimes \tilde{\mu}]$ defined
by (\ref{eqtauprime}). Thus, we take: \\
$(i^{*}, j^{*}) \in \overline{\textsf{argMin}}(\tau')=\{(2,1)\}$, ie. $(i^{*},j^{*})=(2,1)$, and $x_{1}=2 \otimes h$.

Applying the update procedure of \S~\ref{newcharactmax} the new characteristic elments of the problem are listed below.

The set of linear equalities is defined by:

\begin{equation}
\mathcal{L}^{[2]}=\mathcal{L}^{[1]} \cup \{x_{1}=2 \otimes h\} =\{x_{2}=2 \otimes h, x_{1}=2 \otimes h\}.
\end{equation}

The new cost function is:
\begin{equation}
\textsf{cost}^{[2]}(x,h):=f_{z,21}(x,h)= 5 \otimes h.
  \end{equation}

The $4 \times 4$-transition matrix is defined by:

\begin{equation}
  T^{1 \rightarrow 2}:= \left(\bar{cccc}
  \dun & \dzero & \dzero & \dzero \\
  \dzero & \dzero & \dzero & 2 \\
  \dzero & \dzero & \dun & \dzero \\
  \dzero & \dzero & \dzero & \dun
  \ear \right). 
  \end{equation}

We compute the following matrices $A^{+}:=A^{+[1]} \otimes T^{1
  \rightarrow 2}$ and $A^{-}:=A^{-[1]} \otimes T^{1 \rightarrow 2}$
and we obtain:

\[
A^{+}:= \left(\bar{cccc}
\dun & \dzero & \dzero & \dzero \\
\dzero & \dzero & \dzero & 1 \\
\dzero &  \dzero &   \dzero & \dun\\
\dzero &  \dzero  & \dzero & 1\\
\dzero &  \dzero & \dzero & 2
\ear \right), \;
A^{-}:= \left(\bar{cccc}
\dzero & \dzero & \dzero & 5 \\
\dzero & \dzero & \dzero & 2 \\
\dzero &  \dzero &   \dzero & \dun\\
\dzero &  \dzero  & \dzero & 2\\
\dzero &  \dzero & \dzero & 4
\ear \right).
\]
The new cone $\mathcal{C}(A^{+}, A^{-})^{[2]}$ is defined by:
\begin{subequations}
  \begin{equation}
    \label{eqA2max}
A^{+[2]} \otimes w^{[2]} \leq A^{-[2]} \otimes w^{[2]},
    \end{equation}
with $w^{[2]}= (z , \dzero , \dzero , h)^{\intercal} \in \overline{\real}_{\dzero}^{4}$ and the matrices $(A^{\; +}, A^{ \; -})^{[2]}$
are defined as a result of $\overline{\textsf{setrowtozero}}(A^{+}, A^{-})$:

 \begin{equation}
    \label{eqA22max}
A^{+[2]}:= \left(\bar{cccc}
\dun & \dzero & \dzero & \dzero \\
\dzero & \dzero & \dzero & \dzero \\
\dzero &  \dzero &   \dzero & \dzero\\
\dzero &  \dzero  & \dzero & \dzero\\
\dzero &  \dzero & \dzero & \dzero
\ear \right), \;
A^{-[2]}:= \left(\bar{cccc}
\dzero & \dzero & \dzero & 5 \\
\dzero & \dzero & \dzero & \dzero \\
\dzero &  \dzero &   \dzero & \dzero \\
\dzero &  \dzero  & \dzero & \dzero \\
\dzero &  \dzero & \dzero & \dzero
\ear \right).
\end{equation}
    
\end{subequations}

Clearly, $A_{[|1,4|]h}^{+[2]} = \boldsymbol{-\infty} \leq A_{[|1,4|]h}^{-[2]} = \boldsymbol{-\infty}$.
And the conditions (\ref{eqchokmax0})-(\ref{eqchokmax}) of Theorem~\ref{thmchattaignablemax} are verified and the cost
function $h \mapsto 5 \otimes h$ is reached.

The solution of the \textsf{MPR} is obtained by a trivial backward substitution:

\begin{equation}
z= 5 \otimes h, \; x_{2} = 2 \otimes h, \; x_{1} = 2 \otimes h.
\end{equation}

If we take $h=\dun$ then we retrieve by our strongly polynomial method the same maximum as in
Example 1 p. 1458 of \cite{kn:GKS012}. But the corner $(2,2)$ is
different from the one obtained by the pseudo-polynomial method
developed by the authors of \cite{kn:GKS012}, and mentioned p. 1474:
$x_{1}=1, x_{2}=2$.

\end{document}